\documentclass[preprint,12pt]{elsarticle}
\pdfoutput=1

\usepackage[utf8]{inputenc}
\usepackage[T1]{fontenc}
\IfFileExists{lmodern.sty}{\usepackage{lmodern}}{}  

\usepackage{amsmath,amssymb,amsfonts}
\usepackage{amsthm}
\usepackage{mathtools}

\newtheorem{theorem}{Theorem}[section]
\newtheorem{proposition}[theorem]{Proposition}
\newtheorem{lemma}[theorem]{Lemma}
\newtheorem{corollary}[theorem]{Corollary}
\theoremstyle{definition}
\newtheorem{definition}[theorem]{Definition}
\newtheorem{example}[theorem]{Example}
\theoremstyle{remark}
\newtheorem{remark}[theorem]{Remark}

\usepackage{booktabs}
\usepackage{array}
\usepackage{enumitem}

\usepackage{algorithm,algpseudocode}

\usepackage[final,expansion=false]{microtype}
\usepackage{xurl}
\usepackage[colorlinks=true,
linkcolor={blue!70!black},
citecolor={blue!70!black},
urlcolor={blue!80!black}]{hyperref}
\usepackage{xcolor}

\hypersetup{%
	pdftitle={Exact Stratification and Affine Mass Formulas for Split Richelot Data over Finite Fields},
	pdfauthor={Hung T. Dang, Diep V. Nguyen}}

\newcommand{\uptounit}{\mathrel{\sim_{\mathrm{unit}}}}
\DeclareMathOperator{\lc}{lc}
\DeclareMathOperator{\ct}{ct}
\DeclareMathOperator{\Res}{Res}
\DeclareMathOperator{\Disc}{Disc}
\DeclareMathOperator{\Jac}{Jac}
\DeclareMathOperator{\Bez}{Bez}
\DeclareMathOperator{\Stab}{Stab}
\providecommand{\PGL}{\operatorname{PGL}}

\newcommand{\F}{\mathbb{F}}
\newcommand{\Fq}{\mathbb{F}_q}
\newcommand{\Z}{\mathbb{Z}}
\newcommand{\PP}{\mathbb{P}}
\newcommand{\A}{\mathbb{A}}

\newcommand{\dcoef}{\mathfrak{d}}      
\newcommand{\Aff}{\operatorname{Aff}}
\newcommand{\Fix}{\operatorname{Fix}}
\newcommand{\mass}{\operatorname{mass}}
\newcommand{\Adm}{\mathcal{A}}         
\newcommand{\Dec}{\mathcal{D}}         
\newcommand{\Gfive}{\mathcal{G}_5}
\newcommand{\Gsix}{\mathcal{G}_6}

\journal{Finite Fields and Their Applications}
\biboptions{numbers,sort&compress}

\begin{document}
	
	\begin{frontmatter}
		
		\title{Exact Stratification and Affine Mass Formulas for Split Richelot Data over Finite Fields}
		
		\author[pdu]{Hung T. Dang\corref{cor1}\fnref{fn:orcid}}
		\ead{hung.dt@phuongdong.edu.vn}
		\fntext[fn:orcid]{ORCID: \href{https://orcid.org/0009-0006-3272-0573}{0009-0006-3272-0573}}
		\author[pdu]{Diep V. Nguyen}
		\cortext[cor1]{Corresponding author.}
		\affiliation[pdu]{organization={Department of Mathematics, University of Phuong Dong},
			city={Hanoi}, country={Vietnam}}
		
		\begin{abstract}
			The Richelot \((2,2)\)-step is the standard step of explicit genus-2 isogeny computation. We determine the exact stratification of its input space over a
			finite field \(\Fq\) of odd characteristic: ordered factorizations \(f=uvw\)
			of a square-free sextic into monic quadratics fall into three strata by the
			geometric type of the quotient, governed by the incidence geometry of the
			discriminant locus. This yields closed formulas for each stratum and, modulo
			affine coordinate changes, mass formulas of degree four in \(q\) with a
			complete classification of stabilisers. The classification is decided by data the step already computes, at \(5\mathrm{M}+6\mathrm{S}\) beyond the brackets, and an output post-check is provably redundant. On the decomposable stratum the square class of one resultant determines the field of definition of the
			elliptic factors and the shape of the Weil polynomial of the Jacobian; the two cases are counted exactly, and in the nonsplit case the curve \(y^{2}=f\) has \(q+1\) rational points. Exhaustive enumeration over small finite fields verifies every proved count.
		\end{abstract}
		
		\begin{keyword}
			Richelot isogeny \sep genus-2 curve \sep mass formula \sep Bezoutian \sep incidence geometry \sep
			split Jacobian \sep finite field arithmetic
		\end{keyword}
		
	\end{frontmatter}
	
	\section{Introduction}\label{sec:intro}
	
	The Richelot $(2,2)$-isogeny is the fundamental construction for explicit genus-2 isogeny computation. 
	It starts from a square-free sextic presented as a product of three
	monic quadratics, \(f=uvw\) with \(u,v,w\in\Fq[x]\) and \(q\) odd, forms the
	brackets
	\[
	U=v'w-vw',\qquad V=w'u-wu',\qquad W=u'v-uv',
	\]
	and returns the curve cut out by \(\dcoef y^{2}=UVW\), where \(\dcoef\) is the
	determinant of the coefficient matrix of \((u,v,w)\) \cite{CasselsFlynn1996}.
	Throughout, \(C\colon y^{2}=f\) denotes the domain curve. Three outcomes are
	possible. The product \(UVW\) can be square-free of degree
	six; it can be square-free of degree five, which happens exactly when two of the
	middle coefficients agree; or \(\dcoef\) can vanish, in which case the three
	quadratics span a pencil, the brackets are pairwise proportional, and the
	quotient abelian surface is a product of elliptic curves. The classical construction does not describe how the space of inputs distributes over these three outcomes, nor does it determine the arithmetic of the third over the base field.
	
	The organizing observation is that every predicate involved is a statement of
	plane incidence geometry. Identify the monic quadratic \(x^{2}+ax+b\) with the
	point \((a,b)\in\A^{2}(\Fq)\) and let \(\Pi\colon b=a^{2}/4\) be the discriminant
	parabola. A quadratic is separable exactly when its point avoids \(\Pi\). Two
	separable quadratics share a root exactly when their points lie on a common
	tangent line of \(\Pi\). A pair is aligned, so that its bracket drops to degree
	one, exactly when the line joining the two points is vertical. And
	\(\dcoef=0\) exactly when the three points are collinear. This is
	Lemma~\ref{lem:dictionary} below.
	
	The dictionary converts the stratification problem into an incidence count.
	Every line of \(\A^{2}(\Fq)\) is tangent, vertical, secant or external to
	\(\Pi\), and carries \(q-1\), \(q-1\), \(q-2\) or \(q\) separable points
	respectively. Counting incidences gives the exact cardinality of the admissible
	locus \(\Adm\) and of its three strata (Theorem~\ref{thm:counts}): the
	decomposable stratum \(\Dec\) where \(\dcoef=0\), the stratum \(\Gfive\) of
	quintic genus-2 output, and the stratum \(\Gsix\) of sextic genus-2 output. The
	densities of \(\Dec\) and \(\Gfive\) that follow are \(1/q+O(q^{-3})\) and
	\(3/q+O(q^{-2})\).
	
	A displayed splitting is a presentation, not an intrinsic object, so the counts
	of Theorem~\ref{thm:counts} depend on the coordinate in which \(f\) is written.
	We therefore quotient by the group \(\Aff(\Fq)\) of affine coordinate changes
	\(x\mapsto ax+b\) acting on displayed splittings, and record the weighted count
	\(\sum_{\mathcal O}1/\#\Stab(\mathcal O)\), which we call the affine groupoid
	mass. The masses are polynomials of degree four in \(q\)
	(Theorem~\ref{thm:mass}):
	\[
	\mass_{\Aff}(\Adm)=(q-2)^{2}(q^{2}-q+4),\quad
	\mass_{\Aff}(\Dec)=(q-2)(q^{2}-3q+3),
	\]
	\[
	\mass_{\Aff}(\Gfive)=3(q^{3}-6q^{2}+14q-13),\quad
	\mass_{\Aff}(\Gsix)=q^{4}-9q^{3}+35q^{2}-71q+61 .
	\]
	The two degrees removed by the group are exactly the coordinate freedom in the
	displayed model, and the fourth mass is the residual of the first three. The action is not free: the only nontrivial stabilisers are the \(q\) involutions
	\(x\mapsto-x+b\), there are exactly \(2(q-2)(q-3)\) orbits with a nontrivial
	stabiliser, and all of them lie in \(\Dec\). The converse fails: the stabilised locus is the part of \(\Dec\) carried by vertical lines, which is a proper sublocus (Proposition~\ref{prop:stab}).
	
	The classification is decided by data already computed by the step, so the certificate is a by-product rather than an additional pass. The bracket of a pair is the
	diagonal of its Bezoutian, whose three coefficients are the \(2\times2\) minors
	of the coefficient matrix of the pair; the pairwise resultant is
	\(M_1^{2}-M_2M_0\) in those same minors; and \(\dcoef\) is the minor syzygy
	evaluated on the third quadratic, which is two multiplications on monic input
	(Proposition~\ref{prop:d-syzygy}). The resulting evaluator
	(Algorithm~\ref{alg:richelot}, Theorem~\ref{thm:certificate}) is total on
	displayed monic quadratic triples, classifies each into one of
	\textsc{Invalid}, \textsc{Genus2} and \textsc{Decomposable}, and costs
	\(5\mathrm{M}+6\mathrm{S}\) beyond the \(6\mathrm{M}\) bracket computation, or
	\(2\mathrm{M}\) along an already certified chain. Verifying the output triple is
	provably unnecessary (Proposition~\ref{prop:postcheck}), and so is the affine
	retry that an earlier version of this work used (Proposition~\ref{prop:shift}).
	
	The stratum \(\dcoef=0\) is settled as arithmetic rather than rejected as
	failure. The three quadratics lie in one pencil, and the deck involution of that
	pencil lifts to an extra involution of \(C\)
	(Proposition~\ref{prop:extra-involution}). The square class of one resultant,
	which we call the splitting class, is independent of the pair and equals the
	square class of an invariant of the line joining the three points
	(Proposition~\ref{prop:class-line}); this splits the count of \(\Dec\) into two
	exact subcounts (Corollary~\ref{cor:count-class}). When the class is a square
	the two elliptic factors are individually \(\Fq\)-rational; when it is not, they
	are defined over \(\F_{q^{2}}\) and exchanged by Galois, the Weil polynomial of
	\(\Jac(C)\) is even, and \(\#C(\Fq)=q+1\), an implementation check that uses no
	elliptic data at all (Corollary~\ref{cor:pointcount}).
	
	Finally, the two densities one might attach to the decomposable branch do not
	agree, and the disagreement is recorded
	(\ref{app:superspecial}). On the parameter space, \(\dcoef=0\) is a
	hypersurface and carries a \(1/q+O(q^{-3})\) fraction of the admissible
	triples; this is a proved counting statement. On the superspecial locus over
	\(\F_{p^{2}}\), the setting of the genus-2 isogeny problem
	\cite{CostelloSmith2020}, published neighbour and vertex-type counts
	\cite[Prop.~4.3]{KatsuraTakashima2020}\cite[Table~2]{FloritSmith2022auto}
	indicate that decomposable quotients occur only at vertices carrying an extra
	involution, at an overall frequency of order \(1/p\) rather than
	\(1/p^{2}\). That comparison is conditional on the quoted external counts and
	is independent of the proved theorems of this paper.
	
	\paragraph{Contributions}
	\begin{enumerate}[label=(\arabic*),leftmargin=*]
		\item We prove an incidence dictionary translating separability, coprimality,
		bracket degree drop and decomposability into avoidance of the discriminant
		parabola, avoidance of a common tangent, verticality of the join, and
		collinearity
		(Lemma~\ref{lem:dictionary}), and use it to prove exact closed formulas for the
		admissible locus and each of its three strata for every odd prime power \(q\)
		(Theorem~\ref{thm:counts}).
		\item We compute the affine groupoid mass of each stratum, classify all
		stabilisers of the \(\Aff(\Fq)\)-action, count the orbits and the stabilised
		orbits, and locate the stabilised locus exactly inside the decomposable stratum
		(Theorem~\ref{thm:mass}, Proposition~\ref{prop:stab}).
		\item We give a certificate that classifies a Richelot step from the Bezoutian
		minors it already computes, prove that an output post-check and an affine retry
		are redundant, and state the cost as exact operation counts in a declared cost
		model (Theorem~\ref{thm:certificate}, Propositions~\ref{prop:postcheck}
		and~\ref{prop:shift}, \S\ref{sec:verify}).
		\item We identify the splitting class of a decomposable configuration with the
		square class of a line invariant, count the two cases exactly
		(Corollary~\ref{cor:count-class}), and determine the field of definition of the
		elliptic factors, the two shapes of the Weil polynomial, and the point-count
		identity \(\#C(\Fq)=q+1\) in the nonsplit case (\S\ref{sec:split}).
	\end{enumerate}
	
	\paragraph{What is classical and what is not}
	The soundness of the Richelot step (Proposition~\ref{prop:soundness}) is
	classical \cite{CasselsFlynn1996}. The Bezoutian of two polynomials, the
	identity between its determinant and the resultant, and the bracket as the
	Jacobian covariant of the pair are classical
	\cite[Ch.~4]{BasuPollackRoy2006}\cite{HelmkeFuhrmann1989}\cite{Olver1999}.
	Bolza \cite[\S11]{Bolza1887} introduced the invariant whose vanishing detects an
	extra involution of a binary sextic. The product model of a \((2,2)\)-split Jacobian through
	complementary degree-two elliptic subcovers is classical, going back to the
	double-cover geometry recorded by Kuhn \cite[\S1]{Kuhn1988} and by Smith
	\cite[\S8]{Smith2005}. Explicit formulas for decomposed Richelot
	isogenies are given by Castryck, Decru and Smith
	\cite{CastryckDecruSmith2020} and are used in the counting results of Katsura
	and Takashima \cite{KatsuraTakashima2020} and in the atlas of Florit and Smith
	\cite{FloritSmith2022atlas}; those formulas take a sextic whose six
	Weierstrass points are individually rational, and each step there costs three
	square roots. Detection of \((N,N)\)-splittings from moduli, through
	Igusa--Clebsch invariants and Humbert surfaces, is a complementary technique
	\cite{SantosCostelloFrengley2024}: it decides whether a Jacobian is split
	without producing a kernel or a quotient. The counting theorems, the affine
	groupoid mass with its stabiliser analysis, the identification of the splitting
	class with a line invariant together with the resulting refined count, and the
	base-field arithmetic of the decomposable branch are, to our knowledge, new.
	
	\paragraph{Relation to an earlier version}
	An earlier preprint \cite[v1]{DangNguyen2026v1} stated the soundness proposition
	without the hypothesis \(\dcoef\neq0\), used the un-normalized model
	\(y^{2}=UVW\), guarded the step with an output post-check and an affine retry,
	and reported a wall-clock speedup for the derivative-free bracket assembly. All
	four points are corrected here: the hypothesis is restored, the model is
	normalized by \(\dcoef\), the post-check and the retry are proved redundant
	(Propositions~\ref{prop:postcheck} and~\ref{prop:shift}), and the timing claim is
	withdrawn in favour of exact operation counts, since both routes perform the same field operations; the measured ratio reflected an unoptimised baseline.
	
	\paragraph{Scope}
	All statements are for odd \(q\); Remark~\ref{rem:char2} records what fails in
	characteristic~2. The classification applies to a displayed monic quadratic
	splitting over \(\Fq\); kernels whose quadratic factors are permuted by Galois
	are discussed in \S\ref{sec:limits}. The masses of \S\ref{sec:mass} are affine
	groupoid masses of displayed splittings: they weight coefficient presentations
	under \(x\mapsto ax+b\) by automorphisms, and they are not counts of
	isomorphism classes of genus-2 curves or of principally polarised abelian
	surfaces; the affine quotient is not a \(\mathrm{PGL}_2\)-quotient
	(Remark~\ref{rem:not-moduli}).
	
	\paragraph{Organization}
	Section~\ref{sec:background} fixes notation and states the classical soundness
	proposition. Section~\ref{sec:bezoutian} develops the Bezoutian calculus of a
	quadratic pair. Section~\ref{sec:certificate} gives the certificate and the
	evaluator. Section~\ref{sec:strat} proves the incidence dictionary and the
	stratification. Section~\ref{sec:counts} proves the exact counts.
	Section~\ref{sec:mass} proves the affine groupoid mass theorem and the
	stabiliser classification. Section~\ref{sec:split} settles the branch
	\(\dcoef=0\). Section~\ref{sec:verify} records the cost model, the operation
	counts and the verification protocol. Section~\ref{sec:limits} states the
	limitations and Section~\ref{sec:conclusion} concludes. \ref{app:examples}
	works the three output branches out over \(\F_{101}\), and
	\ref{app:superspecial} compares the parameter-space density of the
	decomposable branch with its frequency on the superspecial locus, using results
	quoted from the literature.
	
	\section{Background and the classical step}\label{sec:background}
	
	Throughout, \(\Fq\) is a finite field of odd characteristic \(p>2\).
	Characteristic~2 is excluded because the middle coefficient of every bracket
	vanishes identically there (Remark~\ref{rem:char2}). Equalities between
	polynomials are understood up to a nonzero unit in \(\Fq^{\times}\) unless stated
	otherwise; we write \(A\uptounit B\) when \(A=cB\) for some \(c\in\Fq^{\times}\).
	
	\paragraph{Notation}
	For \(P\in\Fq[x]\) we write \(\deg P\) for its degree, \(\lc(P)\) for its leading
	coefficient and \(\ct(P)\) for its constant term. For a quadratic
	\(s(x)=s_2x^{2}+s_1x+s_0\) we set \(\Disc(s)=s_1^{2}-4s_2s_0\). We write
	\(\Res(f,g)\) for the resultant. The symbol \(\Delta\) is reserved for
	discriminants, \(\delta\) always denotes an affine shift, and \(\dcoef\) denotes
	the coefficient determinant \eqref{eq:det} below, called the branch determinant.
	Coefficient vectors of a quadratic \(s\) are written \((s_2,s_1,s_0)\).
	
	\subsection{Genus-2 curves and the Richelot step}
	
	A genus-2 curve over \(\Fq\) admits an affine model \(C\colon y^{2}=f(x)\) with
	\(f\in\Fq[x]\) square-free of degree~6. We work throughout in the situation where
	\(f\) is presented as a product of three pairwise coprime monic quadratics,
	\begin{equation}\label{eq:split}
		\begin{gathered}
			f=uvw,\qquad u,v,w\in\Fq[x]\ \text{monic of degree }2,\\
			\gcd(u,v)=\gcd(v,w)=\gcd(w,u)=1 .
		\end{gathered}
	\end{equation}
	Given \eqref{eq:split}, the Richelot step forms
	\begin{equation}\label{eq:brackets}
		U=v'w-vw',\qquad V=w'u-wu',\qquad W=u'v-uv',
	\end{equation}
	with derivatives taken with respect to \(x\).
	
	\paragraph{The coefficient determinant}
	For monic \(u=x^{2}+u_1x+u_0\), \(v=x^{2}+v_1x+v_0\), \(w=x^{2}+w_1x+w_0\) put
	\begin{equation}\label{eq:det}
		\dcoef=\dcoef(u,v,w):=\det\begin{pmatrix}u_0&u_1&1\\v_0&v_1&1\\w_0&w_1&1\end{pmatrix},
	\end{equation}
	the determinant of the coefficient matrix with columns ordered by ascending
	degree \((1,x,x^{2})\). The descending order reverses the sign; since a sign is a
	quadratic twist of the codomain model below, the column order is part of the
	convention and is fixed once and for all by \eqref{eq:det}. Note that
	\(\dcoef=0\) if and only if \(u,v,w\) are linearly dependent over \(\Fq\).
	
	\paragraph{Convention on the codomain model}
	The classical codomain of the Richelot step is \(\dcoef y^{2}=U(x)V(x)W(x)\),
	equivalently \(y^{2}=\dcoef^{-1}UVW\)
	\cite{CasselsFlynn1996}\cite[\S2.1]{FloritSmith2022atlas}. Since this paper
	compares polynomial outputs up to units, the identities of
	\S\ref{sec:bezoutian} are unaffected by the constant. The curve \(y^{2}=UVW\),
	however, is the codomain only up to the quadratic twist by \(\dcoef\): over
	\(\F_{q^{2}}\) every unit is a square and the distinction vanishes, but over
	\(\Fq\) the twist is nontrivial whenever \(\dcoef\) is a nonsquare. Whenever an
	\(\Fq\)-rational \((2,2)\)-isogeny is asserted we therefore use the normalized
	model \(y^{2}=\dcoef^{-1}UVW\); the extra inversion is batched with the monic normalization of the output factors at constant cost.
	
	\begin{remark}\label{rem:char2}
		Let \(v(x)=a_2x^{2}+a_1x+a_0\) and \(w(x)=b_2x^{2}+b_1x+b_0\). Direct expansion
		gives
		\[
		v'w-vw'=(a_2b_1-a_1b_2)x^{2}+2(a_2b_0-a_0b_2)x+(a_1b_0-a_0b_1).
		\]
		In characteristic 2 the middle coefficient vanishes identically, causing a degree
		drop even when \(v,w\) are coprime. All algebraic statements below therefore
		assume \(p>2\), so that in expressions such as \(U=M_2x^{2}+2M_1x+M_0\) the
		factor 2 is invertible and both the degree behaviour and the normalizations are
		stable. For the structured degree drop that does occur in odd characteristic,
		see Lemma~\ref{lem:aligned}.
	\end{remark}
	
	\begin{lemma}\label{lem:aligned}
		Let \(a(x)=x^{2}+a_1x+a_0\) and \(b(x)=x^{2}+b_1x+b_0\) be monic quadratics over
		a field of odd characteristic, and put
		\[
		M_2=b_1-a_1,\qquad M_1=b_0-a_0,\qquad M_0=a_1b_0-a_0b_1 .
		\]
		Then \(a'b-ab'=M_2x^{2}+2M_1x+M_0\), and
		\[
		\deg(a'b-ab')<2\iff M_2=0\iff a_1=b_1 .
		\]
		If \(a\) and \(b\) are coprime and \(M_2=0\), then \(M_1\neq0\), so \(a'b-ab'\)
		has degree exactly~1. We call such a pair \emph{aligned}.
	\end{lemma}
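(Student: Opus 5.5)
The plan is to specialise the general expansion of Remark~\ref{rem:char2} to monic input, then read off the degree behaviour coefficient by coefficient, and finally use coprimality through the resultant.

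\emph{Step 1: the identity.} Put \(v=a\) and \(w=b\) in Remark~\ref{rem:char2}, so \(a_2=b_2=1\). The bracket \(a'b-ab'\) is then
\[
(b_1-a_1)x^{2}+2(b_0-a_0)x+(a_1b_0-a_0b_1)=M_2x^{2}+2M_1x+M_0 ,
\]
which is the claimed identity. The characteristic enters only through the factor \(2\) in front of \(M_1\).

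\emph{Step 2: the equivalences.} The coefficient of \(x^{2}\) is \(M_2\), so \(\deg(a'b-ab')<2\) holds exactly when \(M_2=0\). By definition \(M_2=b_1-a_1\), so this holds exactly when \(a_1=b_1\). Nothing else is needed here.

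\emph{Step 3: exact degree one.} Assume \(a,b\) coprime and \(a_1=b_1\), and suppose for contradiction that \(M_1=0\), i.e.\ \(a_0=b_0\). Then \(a=b\), and a monic quadratic is not coprime to itself, which is a contradiction. Hence \(M_1\neq0\). Since \(p>2\), the coefficient \(2M_1\) is also nonzero, so the bracket has degree exactly~\(1\). As a cross-check, one could instead note that \(a-b=(a_0-b_0)\) is a constant, and it must be nonzero since a zero difference would make \(\gcd(a,b)=a\). This is the same argument.

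I do not expect a real obstacle. The only point needing care is to invoke \(p>2\) at the last step, so that \(2M_1\neq0\) follows from \(M_1\neq0\). This is precisely what fails in characteristic~\(2\) by Remark~\ref{rem:char2}.
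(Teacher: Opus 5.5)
Your proof is correct and follows the paper's argument. You specialise the general bracket expansion to monic input, read off the leading coefficient \(M_2=b_1-a_1\), and rule out \(M_1=0\) because it would force \(a=b\); odd characteristic then gives \(2M_1\neq0\). The only difference is cosmetic: you cite the expansion in Remark~\ref{rem:char2}, whereas the paper cites Lemma~\ref{lem:bracket-minors}, which is the same computation.
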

	
	\begin{proof}
		The bracket formula is the coefficient expansion of
		Lemma~\ref{lem:bracket-minors} specialized to monic input; there
		\(M_2=b_1-a_1\), so its vanishing is exactly the equality of the middle
		coefficients. If also \(M_1=0\), then \(a_0=b_0\) and hence \(a=b\), contradicting
		coprimality. Therefore \(M_1\neq0\), and \(2M_1\neq0\) since the characteristic is
		odd.
	\end{proof}
	
	\begin{lemma}\label{lem:three-identities}
		Let \(p>2\) and let \(u,v,w\in\Fq[x]\) be monic quadratics with brackets
		\(U,V,W\) as in \eqref{eq:brackets}. Then
		\begin{enumerate}[label=\textup{(\alph*)},leftmargin=*]
			\item \(\Disc(U)=4\Res(v,w)\), and cyclically for \(V\) and \(W\);
			\item \(U'V-UV'=-2\dcoef\,w\), and cyclically;
			\item if \(U,V,W\) all have degree 2, then
			\(\dcoef(U,V,W)=-2\,\dcoef(u,v,w)^{2}\).
		\end{enumerate}
	\end{lemma}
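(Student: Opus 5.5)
The plan is to treat all three identities as instances of one linear-algebra fact. The bracket \((a,b)\mapsto a'b-ab'\) is bilinear and alternating on the three-dimensional space of quadratics of degree \(\le 2\), so it is a twisted cross product. Part (a) I would verify directly, since it is a two-line computation. For a pair \((v,w)\), Lemma~\ref{lem:aligned} gives \(U=M_2x^{2}+2M_1x+M_0\) with \(M_2=w_1-v_1\), \(M_1=w_0-v_0\), \(M_0=v_1w_0-v_0w_1\). Hence \(\Disc(U)=4(M_1^{2}-M_2M_0)\), and it remains to show that \(M_1^{2}-M_2M_0=\Res(v,w)\) for monic quadratics. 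I would check this by writing \(\Res(v,w)=w(\alpha)w(\beta)\) over the roots \(\alpha,\beta\) of \(v\) and expanding symmetrically. The spot checks \(v=x^{2}\), \(w=x^{2}-1\) and \(v=x^{2}+x\), \(w=x^{2}+1\) both give agreement with the correct sign. The cyclic versions follow by relabelling.

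For (b) and (c) I would identify a quadratic \(s_2x^{2}+s_1x+s_0\) with its coefficient vector \((s_0,s_1,s_2)\in\Fq^{3}\). Remark~\ref{rem:char2} shows that the bracket of two general quadratics has coefficients given by the \(2\times2\) minors of their coefficient matrix, with a factor \(2\) on the middle one. That is, \([a,b]=K(a\times b)\) for a fixed invertible matrix \(K\), a signed permutation matrix with one entry \(2\). I would then use two standard identities:
\[
(Kx)\times(Ky)=\det(K)\,K^{-T}(x\times y),
\]
\[
(a\times b)\times(c\times d)=\det(a,b,d)\,c-\det(a,b,c)\,d .
\]
From these, \([U,V]=K\bigl(K(v\times w)\times K(w\times u)\bigr)=\det(K)\,KK^{-T}\bigl((v\times w)\times(w\times u)\bigr)\). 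Taking \(a=v\), \(b=w\), \(c=w\), \(d=u\) in the second identity gives \((v\times w)\times(w\times u)=\det(v,w,u)\,w=\dcoef\,w\).

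The main obstacle is the factor \(KK^{-T}\). It is not a scalar on the nose, because of the \(2\) in the middle entry and the sign pattern. So I would compute \(KK^{-T}\) explicitly and check that, combined with \(\det K\), it acts on the relevant vector as the scalar \(-2\), possibly after absorbing a sign coming from the antisymmetric convention \(U'V-UV'\) versus \([U,V]\). If this bookkeeping becomes delicate, the fallback is to expand both sides of (b) directly in \(u_i,v_i,w_i\). Both sides are cubic in the coefficients and linear in \(w\)'s role, so a coefficientwise comparison of the three coefficients is finite and mechanical. The cyclic statements again follow by relabelling.

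For (c), the definition \eqref{eq:det} is stated for monic input, so I would first extend \(\dcoef\) to arbitrary quadratics as the full determinant of the coefficient matrix. I would also note that the hypothesis \(\deg U=\deg V=\deg W=2\) is exactly what allows the monic normalisation to be compared with the unnormalised one, and I would record how the leading coefficients enter. Then
\[
\det(U,V,W)=\det(K)\,\det(v\times w,\,w\times u,\,u\times v)=\det(K)\,\det(u,v,w)^{2},
\]
using the classical fact that the adjugate-type matrix of cross products has determinant equal to the square of the original determinant. Finally I would evaluate \(\det K\), including the sign from the ascending column convention in \eqref{eq:det}, to obtain the constant \(-2\), and cross-check it on one explicit triple.
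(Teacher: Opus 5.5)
Your proof is correct. Part (a) is the paper's argument: the minor form of $U$ gives $\Disc(U)=4(M_1^{2}-M_2M_0)$. The paper then cites the Bezoutian--resultant identity for $M_1^{2}-M_2M_0=\Res(v,w)$, where you verify it by expanding $\Res(v,w)=w(\alpha)w(\beta)$ over the roots of $v$.

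For (b) and (c) your route is genuinely different. The paper treats them as polynomial identities in the six coefficients, expanded coefficientwise and confirmed by a symbolic computation over $\Z[u_0,u_1,v_0,v_1,w_0,w_1]$. You instead encode the bracket as $[a,b]=K(a\times b)$ and derive both identities from three facts: the cofactor identity, the quadruple product formula, and the fact that an adjugate has determinant $\det^{2}$. Your version explains the constants. Both factors $-2$ are $\det K$, and $\dcoef^{2}$ is the determinant of the adjugate of the coefficient matrix. It also holds verbatim for non-monic quadratics, with $\dcoef$ the full coefficient determinant. The paper's expansion is more pedestrian but needs no cross-product conventions.

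Two points in your bookkeeping should be corrected.

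First, the ``main obstacle'' does not exist. In ascending coordinates $K(c_0,c_1,c_2)=(-c_2,\,2c_1,\,-c_0)$, which is an anti-diagonal symmetric matrix. So $KK^{-T}=I$ and $\det K=-2$. The coefficient vector of $U'V-UV'$ is then $-2\,(v\times w)\times(w\times u)=-2\dcoef\,w$ immediately, and $\det(U,V,W)=\det K\cdot\dcoef^{2}=-2\dcoef^{2}$.

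Second, in (c) the degree hypothesis plays no role, and you should not try to reconcile the identity with monic normalisations. It holds for the unnormalised coefficient determinant of $(U,V,W)$ regardless of degrees, and it fails after dividing by leading coefficients. For $u=x^{2}+1$, $v=x^{2}+3x+2$, $w=x^{2}+5x+3$ over $\F_{101}$, the full determinant is $-2$, while the monic one is $-2/\bigl(2\cdot(-5)\cdot 3\bigr)=1/15\neq-2$.
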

	
	\begin{proof}
		(a) By Lemma~\ref{lem:bracket-minors}, \(U=M_2x^{2}+2M_1x+M_0\) for the minors
		\((M_2,M_1,M_0)\) of the pair \((v,w)\), so \(\Disc(U)=4(M_1^{2}-M_2M_0)\); and
		\(M_1^{2}-M_2M_0=\Res(v,w)\) by Lemma~\ref{lem:bezoutian}. Parts (b) and (c) are
		polynomial identities in the six input coefficients. For (b), both sides are
		\(\Fq\)-bilinear and alternating in the pair \((u,v)\) for fixed \(w\); expanding
		the left side through the minor form applied to \((U,V)\) and collecting
		coefficients yields \(-2\dcoef\cdot(1,w_1,w_0)\) in the basis \((x^{2},x,1)\).
		Identity (c) follows by applying \eqref{eq:det} to the coefficient matrix of
		\((U,V,W)\) and using multilinearity. Both expansions are elementary, and both
		were verified symbolically over \(\Z[u_0,u_1,v_0,v_1,w_0,w_1]\)
		(\S\ref{subsec:symbolic}).
	\end{proof}
	
	\begin{proposition}[Richelot output structure]\label{prop:soundness}
		Let \(p>2\) and let \(u,v,w\in\Fq[x]\) be monic, pairwise coprime quadratics with
		\(\Delta_u\Delta_v\Delta_w\neq0\); equivalently, \(f=uvw\) is square-free of
		degree~6. Let \((U,V,W)\) and \(\dcoef\) be as in \eqref{eq:brackets} and
		\eqref{eq:det}.
		\begin{enumerate}[label=\textup{(\roman*)},leftmargin=*]
			\item If \(\dcoef\neq0\), then \(UVW\) is square-free of degree 5 or 6, the curve
			\(C'\colon y^{2}=\dcoef^{-1}UVW\) is smooth of genus 2, and the induced map
			\(\Jac(C)\to\Jac(C')\) is a separable \((2,2)\)-isogeny defined over \(\Fq\)
			\cite{CasselsFlynn1996}.
			\item Moreover \(\deg U=\deg V=\deg W=2\) if and only if no pair among
			\((u,v),(v,w),(w,u)\) is aligned. If exactly one pair is aligned, then
			\(\deg(UVW)=5\) and \(C'\) is still smooth of genus 2, with a Weierstrass point
			at infinity.
			\item If \(\dcoef=0\), then no model \(y^{2}=c\,UVW\) with \(c\in\Fq^{\times}\)
			is a genus-2 codomain, and the quotient abelian surface is a product of elliptic
			curves; this case is treated in \S\ref{sec:split}.
		\end{enumerate}
	\end{proposition}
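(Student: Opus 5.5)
We establish square-freeness of \(UVW\) and the degree statements by elementary means, taking the isogeny assertion from \cite{CasselsFlynn1996}, and we do so in the order (ii), (i), (iii). For (ii), Lemma~\ref{lem:aligned} applied to each of the three pairs gives \(\deg U=\deg V=\deg W=2\) exactly when no pair is aligned, and that any aligned bracket has degree exactly~1. At most one pair can be aligned. Aligned means equal middle coefficients, so if two pairs were aligned then \(u_1=v_1=w_1\). In that case the three points \((u_1,u_0),(v_1,v_0),(w_1,w_0)\) lie on a vertical line, and expanding \eqref{eq:det} with equal \(x\)-coefficient column forces \(\dcoef=0\). Hence under \(\dcoef\neq0\) the degree of \(UVW\) is \(6\) or \(5\).

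For square-freeness in (i), we show that each bracket is square-free and that the brackets are pairwise coprime. By Lemma~\ref{lem:three-identities}(a), \(\Disc(U)=4\Res(v,w)\neq0\) by coprimality of \(v,w\); when \(\deg U=1\) there is nothing to check. For coprimality, suppose \(U,V\) share a root \(\alpha\in\overline{\F}_q\). Then Lemma~\ref{lem:three-identities}(b) gives \(-2\dcoef\,w(\alpha)=U'(\alpha)V(\alpha)-U(\alpha)V'(\alpha)=0\), so \(w(\alpha)=0\) since \(\dcoef\neq0\). Next we use the bracket definitions directly: \(U(\alpha)=v'(\alpha)w(\alpha)-v(\alpha)w'(\alpha)=-v(\alpha)w'(\alpha)\) and similarly \(V(\alpha)=u(\alpha)w'(\alpha)\). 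Since \(w\) is separable, \(w'(\alpha)\neq0\), so \(v(\alpha)=u(\alpha)=0\), which contradicts pairwise coprimality. Thus \(UVW\) is square-free of degree 5 or 6, and \(C'\) is smooth of genus 2; in degree 5 it has a single Weierstrass point at infinity, which completes (ii). Rationality of the isogeny over \(\Fq\) is the classical Cassels--Flynn correspondence. We only note that the normalization \(\dcoef^{-1}\) is exactly the constant in the classical codomain \(\dcoef y^2=UVW\), so no twist intervenes.

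For (iii), \(\dcoef=0\) means \(u,v,w\) are linearly dependent. The main step is showing the brackets are pairwise proportional, which makes \(UVW\) non-square-free for every constant \(c\). Writing \(w=\lambda u+\mu v\) (possible since \(u,v\) are distinct and hence independent), bilinearity gives \(U=v'w-vw'=\lambda(v'u-vu')=-\lambda W\) and \(V=w'u-wu'=\mu(v'u-vu')=-\mu W\). So all three brackets are multiples of \(W\), and \(UVW\uptounit W^3\) is a cube up to a unit; it is never square-free (and \(\lambda\mu\neq0\) by coprimality, so no bracket vanishes). The statement that the quotient \(\Jac(C)/\ker\) is a product of elliptic curves is deferred to \S\ref{sec:split} via the pencil involution of Proposition~\ref{prop:extra-involution}. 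This last identification is the real obstacle, and here we would only cite it.
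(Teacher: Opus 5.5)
Your proposal is correct and follows the paper's proof essentially step by step. You use the bracket discriminants $\Disc(U)=4\Res(v,w)$ from Lemma~\ref{lem:three-identities}(a), pairwise coprimality via $U'V-UV'=-2\dcoef\,w$ followed by evaluating the brackets at the common root, the degree count from the fact that two aligned pairs force $u_1=v_1=w_1$ and hence $\dcoef=0$, and the pencil computation $U=-\lambda W$, $V=-\mu W$ for (iii), deferring the product statement to \S\ref{sec:split} and the isogeny to Cassels--Flynn. The only cosmetic difference is that you invoke $w'(\alpha)\neq0$ first to get $u(\alpha)=v(\alpha)=0$, whereas the paper splits $v(r)w'(r)=0$ into two cases.
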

	
	\begin{proof}
		(i) If an output factor had a repeated root, its
		discriminant would vanish, so the resultant of the corresponding input pair would
		vanish by Lemma~\ref{lem:three-identities}(a), contradicting pairwise
		coprimality. When an aligned pair makes an output linear, a linear polynomial has
		no repeated root.
		
		Suppose \(U(r)=V(r)=0\) for some \(r\) in an algebraic
		closure. Then \((U'V-UV')(r)=0\), so Lemma~\ref{lem:three-identities}(b) with
		\(\dcoef\neq0\) and \(p>2\) gives \(w(r)=0\). Substituting into
		\(0=U(r)=v'(r)w(r)-v(r)w'(r)\) yields \(v(r)w'(r)=0\): if \(v(r)=0\) then \(v,w\)
		share the root \(r\), contradicting coprimality; if \(w'(r)=0\) then \(r\) is a
		repeated root of \(w\), contradicting \(\Delta_w\neq0\). The other pairs are
		cyclic. Hence \(U,V,W\) are pairwise coprime and \(UVW\) is square-free.
		
		The leading coefficient of \(U\) is \(w_1-v_1\) for monic inputs,
		and cyclically. If two of the three leading coefficients vanished, all three
		middle coefficients \(u_1,v_1,w_1\) would coincide, the middle column of the
		matrix in \eqref{eq:det} would be a multiple of the last, and \(\dcoef=0\), a
		contradiction. Hence at most one leading coefficient vanishes and
		\(\deg(UVW)\in\{5,6\}\). A square-free polynomial of degree 5 or 6 defines a
		smooth hyperelliptic curve of genus 2. The isogeny statement over \(\Fq\) for the
		\(\dcoef\)-normalized model is the classical Richelot construction
		\cite{CasselsFlynn1996}.
		
		(ii) Immediate from Lemma~\ref{lem:aligned}.
		
		(iii) If \(\dcoef=0\), the three monic quadratics are linearly dependent, so
		\(w=\alpha u+\beta v\) with \(\alpha+\beta=1\) and \(\alpha\beta\neq0\) by
		pairwise coprimality. Then
		\[
		U=v'(\alpha u+\beta v)-v(\alpha u'+\beta v')=\alpha(v'u-vu')=-\alpha W,
		\qquad V=-\beta W
		\]
		by the same computation, so \(UVW=\alpha\beta W^{3}\) is a unit times a cube and
		no square-free model arises. That the quotient is a product of elliptic curves is
		Theorem~\ref{thm:split}.
	\end{proof}
	
	\begin{remark}\label{rem:counterexamples}
		The two hypotheses of Proposition~\ref{prop:soundness}(i) fail independently.
		\emph{Alignment.} Over \(\F_{11}\) the triple \(u=x^{2}+1\), \(v=x^{2}+2\),
		\(w=x^{2}+3\) is pairwise coprime with all discriminants nonzero, yet all three
		middle coefficients agree, all three brackets are linear, and \(UVW\) is a unit
		times \((2x)^{3}\); here \(\dcoef=0\) as well. \emph{Collinearity without
			alignment.} Over \(\F_{101}\) take \(u=x^{2}+1\), \(v=x^{2}+x+2\),
		\(w=x^{2}+2x+3\). The three leading minors are \(1,-2,1\), so no pair is aligned
		and every bracket has degree 2; the discriminants and the three pairwise
		resultants are nonzero. Nevertheless \(\dcoef=0\), and \(U=W=x^{2}+2x-1\) with
		\(V=-2(x^{2}+2x-1)\), so \(UVW\) is a unit times a cube. The second example
		passes every predicate of the form ``discriminant, resultant, or bracket
		degree'' and is detected only by \(\dcoef\) itself. The certificate of \S\ref{sec:certificate} therefore tests \(\dcoef\) directly rather than inferring it.
	\end{remark}
	
	\subsection{Normalization}
	Because the step is defined only up to nonzero scalars, we systematically make
	the image quadratics \(U,V,W\) monic. To avoid redundant inversions we multiply
	the nonzero leading coefficients, invert once, and distribute the inverse.
	
	\section{The Bezoutian of a quadratic pair}\label{sec:bezoutian}
	
	The certificate rests on the Bezoutian of a quadratic pair. All of it concerns one object: the Bezoutian of two quadratics. Its coefficients are the
	minors of the coefficient matrix, its determinant is the resultant up to sign,
	its diagonal is the Richelot bracket, and its associated involution is the deck
	involution of the pencil. None of these facts is new
	\cite[Ch.~4]{BasuPollackRoy2006}\cite{HelmkeFuhrmann1989}\cite{Olver1999}; the coordinates of this section allow all of them, and later the branch determinant, to be read off simultaneously.
	
	\subsection{Bracket coordinates}
	
	Let \(a(x)=a_2x^{2}+a_1x+a_0\) and \(b(x)=b_2x^{2}+b_1x+b_0\) be quadratics over
	a field \(K\) of odd characteristic. Define their three \(2\times2\) coefficient
	minors by
	\begin{equation}\label{eq:minors}
		M_2=a_2b_1-a_1b_2,\qquad M_1=a_2b_0-a_0b_2,\qquad M_0=a_1b_0-a_0b_1 .
	\end{equation}
	The subscripts record the degree of the corresponding term in the bracket.
	
	\begin{lemma}\label{lem:bracket-minors}
		With the notation above,
		\begin{equation}\label{eq:bracket}
			a'b-ab'=M_2x^{2}+2M_1x+M_0 .
		\end{equation}
	\end{lemma}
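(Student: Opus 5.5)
The plan is a direct coefficient expansion; nothing beyond the product rule and collecting terms is required. I will write \(a'=2a_2x+a_1\) and \(b'=2b_2x+b_1\), form the two products \(a'b\) and \(ab'\), and subtract them degree by degree in the basis \((x^{3},x^{2},x,1)\).

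First, the cubic term. The coefficient of \(x^{3}\) in \(a'b\) is \(2a_2b_2\), and in \(ab'\) it is \(2a_2b_2\). These cancel, so the bracket has degree at most two; this cancellation is the reason the bracket is a quadratic rather than a cubic.

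Next come the remaining degrees:
\begin{itemize}
\item In degree \(2\), \(a'b\) contributes \(2a_2b_1+a_1b_2\) and \(ab'\) contributes \(a_2b_1+2a_1b_2\). The difference is \(a_2b_1-a_1b_2=M_2\).
\item In degree \(1\), \(a'b\) contributes \(2a_2b_0+a_1b_1\) and \(ab'\) contributes \(a_1b_1+2a_0b_2\). The difference is \(2(a_2b_0-a_0b_2)=2M_1\).
\item In degree \(0\), the difference is \(a_1b_0-a_0b_1=M_0\).
\end{itemize}
Comparing with \eqref{eq:minors} gives exactly \eqref{eq:bracket}.

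There is no genuine obstacle. The only point needing care is sign bookkeeping: the minors must be taken in the order \(a\) before \(b\), matching \(a'b-ab'\) rather than \(ab'-a'b\). The identity holds over any commutative ring, since no division occurs. Odd characteristic is not needed for the identity itself; it matters only afterwards, for the factor \(2M_1\) to carry degree information, as noted in Remark~\ref{rem:char2}.

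As a consistency check, specializing to monic inputs (\(a_2=b_2=1\)) gives \(M_2=b_1-a_1\), \(M_1=b_0-a_0\) and \(M_0=a_1b_0-a_0b_1\). These agree with Lemma~\ref{lem:aligned} and with the displayed expansion in Remark~\ref{rem:char2}.
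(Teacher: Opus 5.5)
Your proof is correct and is the same as the paper's: differentiate, expand \(a'b-ab'\) coefficient by coefficient, and read off \(M_2\), \(2M_1\) and \(M_0\) after the cubic terms cancel. All of your degree-by-degree coefficients check out, as does your monic specialization.
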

	
	\begin{proof}
		Differentiating and expanding,
		\[
		\begin{aligned}
			a'b-ab'&=(2a_2x+a_1)(b_2x^{2}+b_1x+b_0)-(a_2x^{2}+a_1x+a_0)(2b_2x+b_1)\\
			&=(a_2b_1-a_1b_2)x^{2}+2(a_2b_0-a_0b_2)x+(a_1b_0-a_0b_1).
		\end{aligned}\qedhere
		\]
	\end{proof}
	
	\begin{lemma}\label{lem:translate}
		For \(\lambda\in K\) put \(a_\lambda(x)=a(x+\lambda)\) and
		\(b_\lambda(x)=b(x+\lambda)\). Then
		\[
		\begin{gathered}
			M_2(a_\lambda,b_\lambda)=M_2,\qquad
			M_1(a_\lambda,b_\lambda)=M_1+\lambda M_2,\\
			M_0(a_\lambda,b_\lambda)=M_0+2\lambda M_1+\lambda^{2}M_2 .
		\end{gathered}
		\]
		Equivalently, the bracket satisfies \([a_\lambda,b_\lambda](x)=[a,b](x+\lambda)\).
	\end{lemma}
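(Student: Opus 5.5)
The cleanest route is to prove the bracket identity \([a_\lambda,b_\lambda](x)=[a,b](x+\lambda)\) first and then read off the three minor formulas by comparing coefficients through Lemma~\ref{lem:bracket-minors}. For the bracket identity, I will use the chain rule. Since \(a_\lambda(x)=a(x+\lambda)\), we have \(a_\lambda'(x)=a'(x+\lambda)\), and likewise for \(b\). Hence
\[
a_\lambda'(x)b_\lambda(x)-a_\lambda(x)b_\lambda'(x)=\bigl(a'b-ab'\bigr)(x+\lambda).
\]
This is valid over any field \(K\), because formal differentiation commutes with the substitution \(x\mapsto x+\lambda\).

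Next I apply Lemma~\ref{lem:bracket-minors} to both sides. The left side is \(M_2(a_\lambda,b_\lambda)x^{2}+2M_1(a_\lambda,b_\lambda)x+M_0(a_\lambda,b_\lambda)\). Expanding the right side gives
\[
M_2(x+\lambda)^{2}+2M_1(x+\lambda)+M_0=M_2x^{2}+2(M_1+\lambda M_2)x+(M_0+2\lambda M_1+\lambda^{2}M_2).
\]
Comparing the \(x^{2}\) and constant coefficients yields the stated formulas for \(M_2\) and \(M_0\) directly. Comparing the \(x\)-coefficients gives \(2M_1(a_\lambda,b_\lambda)=2(M_1+\lambda M_2)\), and dividing by \(2\) yields the formula for \(M_1\). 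This division is the only place the odd-characteristic hypothesis enters.

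I do not expect a genuine obstacle, only the bookkeeping around the factor \(2\). If one prefers to avoid the division entirely, the \(M_1\) formula can be checked directly. The shifted coefficients are \(a_2\), \(a_1+2\lambda a_2\) and \(a_0+\lambda a_1+\lambda^{2}a_2\), and similarly for \(b\). Then
\[
M_1(a_\lambda,b_\lambda)=a_2(b_0+\lambda b_1+\lambda^{2}b_2)-(a_0+\lambda a_1+\lambda^{2}a_2)b_2 .
\]
The \(\lambda^{2}\) terms cancel, and what remains is \(M_1+\lambda M_2\) in every characteristic.

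Finally, the ``equivalently'' clause follows in the reverse direction. The three minor formulas substituted into \eqref{eq:bracket} reproduce \([a,b](x+\lambda)\), so the formulas and the bracket identity imply each other.
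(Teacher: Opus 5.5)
Your argument is correct, but it runs in the opposite direction from the paper's proof. The paper substitutes the shifted coefficients \(a_1+2\lambda a_2\), \(a_0+\lambda a_1+\lambda^{2}a_2\) (and likewise for \(b\)) directly into the definitions \eqref{eq:minors}. It obtains the three minor formulas as polynomial identities, and only then remarks that the bracket identity follows from \eqref{eq:bracket}. You instead get the bracket identity from the chain rule and read the minor formulas off by comparing coefficients via Lemma~\ref{lem:bracket-minors}.

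The paper's route is a characteristic-free identity in \(\Z[a_i,b_i,\lambda]\), in keeping with the symbolic checks of \S\ref{subsec:symbolic}. Its cost is expanding the \(M_0\) product by hand, where the cubic terms in \(\lambda\) must cancel. Your route avoids that expansion and explains the shape of the answer: because the bracket is translation-equivariant, the minors transform exactly as the coefficients of the binary form \(M_2x^{2}+2M_1x+M_0\) under \(x\mapsto x+\lambda\).

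The only price of your route is the division by \(2\) in the \(x\)-coefficient, which the lemma's odd-characteristic hypothesis permits. Your direct computation of \(M_1(a_\lambda,b_\lambda)\) removes even that division. The \(x^{2}\) and constant-coefficient comparisons hold in any characteristic, so your proof in fact establishes all three formulas over every field.
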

	
	\begin{proof}
		Substitute \(a_1\mapsto a_1+2\lambda a_2\) and
		\(a_0\mapsto a_0+\lambda a_1+\lambda^{2}a_2\), and similarly for \(b\), into
		\eqref{eq:minors}. The final identity also follows directly from
		\eqref{eq:bracket}.
	\end{proof}
	
	\subsection{The minor syzygy}
	
	\begin{proposition}\label{prop:syzygy}
		The minors \eqref{eq:minors} satisfy
		\begin{equation}\label{eq:syzygy}
			a_2M_0-a_1M_1+a_0M_2=0 ,
		\end{equation}
		and the same relation holds with \((a_2,a_1,a_0)\) replaced by \((b_2,b_1,b_0)\).
	\end{proposition}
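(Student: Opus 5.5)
The plan is to recognise the left-hand side of \eqref{eq:syzygy} as a determinant with a repeated row. Consider the \(3\times3\) matrix
\[
N=\begin{pmatrix}a_2&a_1&a_0\\ a_2&a_1&a_0\\ b_2&b_1&b_0\end{pmatrix},
\]
whose first two rows coincide, so that \(\det N=0\). Expand \(\det N\) by cofactors along the first row. The complementary \(2\times2\) minors come from the rows \((a_2,a_1,a_0)\) and \((b_2,b_1,b_0)\).

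Deleting column \(1\) gives \(a_1b_0-a_0b_1=M_0\). Deleting column \(2\) gives \(a_2b_0-a_0b_2=M_1\). Deleting column \(3\) gives \(a_2b_1-a_1b_2=M_2\). These are exactly the minors of \eqref{eq:minors}, and each appears with the correct sign. The cofactor expansion therefore reads
\[
0=\det N=a_2M_0-a_1M_1+a_0M_2 .
\]
This is \eqref{eq:syzygy}.

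For the companion relation, replace the first row of \(N\) by \((b_2,b_1,b_0)\). The matrix again has two equal rows, now rows one and three. The same expansion gives \(b_2M_0-b_1M_1+b_0M_2=0\).

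As a sanity check, one can expand directly. The terms of \(a_2(a_1b_0-a_0b_1)-a_1(a_2b_0-a_0b_2)+a_0(a_2b_1-a_1b_2)\) cancel in pairs.

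There is no real obstacle here. The only care needed is matching the cofactor signs \(+,-,+\) and the column deletions to the subscript convention of \eqref{eq:minors}, because \(M_1\) is the minor obtained by deleting the middle column. The identity is a polynomial identity over \(\mathbb{Z}\), so it holds in any characteristic; odd characteristic is not used.
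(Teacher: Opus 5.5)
Your proof is correct and is essentially the paper's argument: the paper also reads the left side as the cofactor expansion of a \(3\times3\) determinant with a repeated row of \(a\)-coefficients, obtaining the \(b\)-relation by making \(b\)'s row the repeated one. The only difference is that the paper places the repeated row last, \(\det(a;b;a)\), and expands along it, whereas you place it first, which is an inessential variant.
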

	
	\begin{proof}
		The determinant of a matrix with two equal rows vanishes:
		\[
		0=\det\begin{pmatrix}a_2&a_1&a_0\\b_2&b_1&b_0\\a_2&a_1&a_0\end{pmatrix}
		=a_2(a_1b_0-a_0b_1)-a_1(a_2b_0-a_0b_2)+a_0(a_2b_1-a_1b_2),
		\]
		which is \eqref{eq:syzygy}. Replacing the last row by the second proves the
		symmetric statement.
	\end{proof}
	
	\subsection{The Bezoutian and the pencil involution}
	
	\begin{lemma}\label{lem:bezoutian}
		In \(K[x,z]\) one has
		\begin{equation}\label{eq:bez}
			\Bez_{a,b}(x,z):=\frac{a(x)b(z)-a(z)b(x)}{x-z}=M_2xz+M_1(x+z)+M_0 ,
		\end{equation}
		and the associated Bezout matrix and its determinant are
		\begin{equation}\label{eq:bezdet}
			\Bez(a,b)=\begin{pmatrix}M_0&M_1\\M_1&M_2\end{pmatrix},\qquad
			\det\Bez(a,b)=M_0M_2-M_1^{2}=-\Res(a,b).
		\end{equation}
		Setting \(z=x\) recovers \eqref{eq:bracket}: the Richelot bracket is the diagonal
		of the Bezoutian, that is, the Jacobian covariant of the pair.
	\end{lemma}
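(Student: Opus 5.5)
The plan is to verify the three assertions of Lemma~\ref{lem:bezoutian} by direct expansion, then obtain the resultant identity from the factored form of the resultant rather than from a Sylvester determinant.

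\emph{The bivariate identity \eqref{eq:bez}.} Write \(a(x)b(z)-a(z)b(x)=\sum_{i,j}a_ib_j(x^iz^j-z^ix^j)\). Only the antisymmetric combinations survive, so grouping the pairs \(\{i,j\}\) gives
\[
(a_2b_1-a_1b_2)(x^2z-xz^2)+(a_2b_0-a_0b_2)(x^2-z^2)+(a_1b_0-a_0b_1)(x-z).
\]
Dividing by \(x-z\) term by term uses \(x^2z-xz^2=xz(x-z)\) and \(x^2-z^2=(x+z)(x-z)\), and yields \(M_2xz+M_1(x+z)+M_0\). Reading off the coefficients of \(x^iz^j\) in the monomial basis \((1,x)\times(1,z)\) gives the symmetric matrix \(\begin{pmatrix}M_0&M_1\\M_1&M_2\end{pmatrix}\), whose determinant is \(M_0M_2-M_1^2\).

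\emph{The resultant identity \(M_0M_2-M_1^2=-\Res(a,b)\).} This is the only step with content. I would prove it as a polynomial identity in the six coefficients, so it suffices to check it over an algebraic closure with \(a_2b_2\neq0\). Write \(a=a_2(x-\alpha_1)(x-\alpha_2)\). Then
\[
\Res(a,b)=a_2^2\,b(\alpha_1)b(\alpha_2).
\]
Evaluate the Bezoutian at \(x=\alpha_1\), \(z=\alpha_2\), where \(a\) vanishes at both points. For \(\alpha_1\neq\alpha_2\) one could try to relate \(\Bez(\alpha_1,\alpha_2)\) to \(b(\alpha_i)\) directly, but it is cleaner to compare the two sides on the overlap. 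Both sides are homogeneous of bidegree \((2,2)\) in the coefficients of \((a,b)\). Both vanish exactly when \(a\) and \(b\) share a root, since the Bezout matrix is singular iff \(\gcd(a,b)\) is nonconstant. Both are irreducible up to scalar, so they are proportional.

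The constant is fixed by one specialization. Take \(a=x^2\) and \(b=1\): then \(M_2=0\), \(M_1=1\), \(M_0=0\), so \(\det=-1\), while \(\Res(x^2,1)=1\). This gives the sign \(-1\).

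If one prefers to avoid the irreducibility argument, the fallback is to expand the \(4\times4\) Sylvester determinant directly and match it against \(M_1^2-M_0M_2\). That expansion is routine but tedious, and it could be confirmed symbolically in the same way as Lemma~\ref{lem:three-identities}.

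\emph{The diagonal.} Setting \(z=x\), the quotient \((a(x)b(z)-a(z)b(x))/(x-z)\) tends to \(a'(x)b(x)-a(x)b'(x)\) by l'H\^opital in \(z\), or formally by differentiating the numerator in \(z\) and setting \(z=x\). On the polynomial side, \(M_2x^2+2M_1x+M_0\) agrees with \eqref{eq:bracket}, which identifies the bracket as the diagonal of the Bezoutian.

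The main obstacle is pinning down the sign convention of \(\Res\) consistently. I would fix the convention as the Sylvester determinant, equivalently \(a_2^2\,b(\alpha_1)b(\alpha_2)\), and run the normalization check \(a=x^2\), \(b=1\) under it.
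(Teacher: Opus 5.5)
\textbf{Verdict.} Your proposal is correct up to an overstated step, and it proves the resultant identity by a genuinely different route from the paper.

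\textbf{Where you agree with the paper.} Your expansion of the numerator and your treatment of the diagonal match the paper's proof. The paper likewise divides by \(x-z\), reads off the matrix in the basis \((1,x)\), and gets the bracket by substituting \(z=x\) and invoking Lemma~\ref{lem:bracket-minors}.

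\textbf{Where you differ.} The difference is the resultant identity, which the paper does not prove. It cites the classical Bezoutian--resultant equality and lists \(\Res(a,b)=M_1^{2}-M_2M_0\) among the identities checked over \(\Z\) in \S\ref{subsec:symbolic}. That is exactly your fallback. For two quadratics it is just the textbook formula \(\Res(a,b)=(a_2b_0-a_0b_2)^{2}-(a_2b_1-a_1b_2)(a_1b_0-a_0b_1)\), so it is not tedious.

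Your structural route explains why the identity holds: equal bidegree \((2,2)\), a common zero locus, and the normalization at \(a=x^{2}\), \(b=1\), which you compute correctly.

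\textbf{The overstated step.} As written, the argument leans on claims you have not justified and do not need. You have no independent reason to know that \(M_0M_2-M_1^{2}\) is irreducible; that follows from the identity, not conversely. The converse half of the singularity criterion is also superfluous. Two facts suffice:
\begin{itemize}
\item irreducibility of the generic resultant, which is classical;
\item the easy direction: if \(r\) is a common root, then \(\Bez_{a,b}(x,r)\equiv0\), so \(\Bez(a,b)(1,r)^{\mathsf T}=0\).
\end{itemize}
Thus \(\det\Bez(a,b)\) vanishes on the dense part \(a_2b_2\neq0\) of the irreducible hypersurface \(V(\Res)\), so \(\Res\) divides it. The degree count plus your specialization then give the constant \(-1\).

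\textbf{A more direct completion.} The root computation you abandoned also closes directly, with no irreducibility theorem. Evaluate at \(x=\alpha_i\) with \(z\) free, rather than at \((\alpha_1,\alpha_2)\). Then \(\Bez_{a,b}(\alpha_1,z)=a_2b(\alpha_1)(z-\alpha_2)\) and \(\Bez_{a,b}(\alpha_2,z)=a_2b(\alpha_2)(z-\alpha_1)\), that is,
\[
\begin{pmatrix}1&\alpha_1\\1&\alpha_2\end{pmatrix}\Bez(a,b)
=\begin{pmatrix}a_2b(\alpha_1)&0\\0&a_2b(\alpha_2)\end{pmatrix}
\begin{pmatrix}-\alpha_2&1\\-\alpha_1&1\end{pmatrix}.
\]
Taking determinants gives \((\alpha_2-\alpha_1)\det\Bez(a,b)=(\alpha_1-\alpha_2)\Res(a,b)\). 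Hence \(\det\Bez(a,b)=-\Res(a,b)\) whenever \(a_2\neq0\) and \(\alpha_1\neq\alpha_2\), and therefore identically. In this version the sign comes out of the computation itself rather than from a separate normalization.
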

	
	\begin{proof}
		The numerator is alternating in \(x,z\), hence divisible by \(x-z\), and
		expanding the quotient gives \eqref{eq:bez}; the matrix of this symmetric
		bilinear form in the basis \((1,x)\) is \(\Bez(a,b)\). The identity
		\(M_1^{2}-M_2M_0=\Res(a,b)\) is the degree-2 case of the classical equality
		between the Bezoutian determinant and the resultant
		\cite[Ch.~4]{BasuPollackRoy2006}\cite{HelmkeFuhrmann1989}; it is one of the
		symbolic checks of \S\ref{subsec:symbolic}. The diagonal specialization is
		\(M_2x^{2}+2M_1x+M_0\), which Lemma~\ref{lem:bracket-minors} identifies with
		\(a'b-ab'\); for the covariant terminology see \cite{Olver1999}.
	\end{proof}
	
	\begin{lemma}\label{lem:deck}
		Let \(a,b\in K[x]\) be coprime monic quadratics and put \(R=\Res(a,b)\). The map
		\(\varphi_{a,b}\colon\PP^{1}\to\PP^{1}\), \(x\mapsto[a(x):b(x)]\), has degree two.
		Its nontrivial deck transformation is represented by
		\begin{equation}\label{eq:iota}
			\iota_{a,b}(x)=\frac{-M_1x-M_0}{M_2x+M_1},
		\end{equation}
		interpreted projectively when a denominator vanishes. Its fixed divisor is the
		bracket \(M_2x^{2}+2M_1x+M_0\). Moreover, for every \(s=\lambda a+\mu b\) in the
		pencil,
		\begin{equation}\label{eq:scaling}
			s\bigl(\iota_{a,b}(x)\bigr)\,(M_2x+M_1)^{2}=R\,s(x).
		\end{equation}
	\end{lemma}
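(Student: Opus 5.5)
The plan is to establish the scaling identity \eqref{eq:scaling} first and derive the remaining claims from it. Both sides of \eqref{eq:scaling} are linear in \(s\), so it suffices to treat \(s=a\) and \(s=b\). Write \(N(x)=-M_1x-M_0\) and \(D(x)=M_2x+M_1\), so that \(s(\iota(x))D^{2}=s_2N^{2}+s_1ND+s_0D^{2}\) for \(s=s_2x^2+s_1x+s_0\). This is a quadratic polynomial in \(x\), and I will compare its three coefficients with those of \(R\,s(x)\), using \(R=M_1^{2}-M_2M_0\) from Lemma~\ref{lem:bezoutian}.

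\emph{Leading coefficient.} It is \(s_2M_1^{2}-s_1M_1M_2+s_0M_2^{2}\), and the target is \(s_2(M_1^{2}-M_2M_0)\). The two agree exactly when \(M_2(s_0M_2-s_1M_1+s_2M_0)=0\), which is the minor syzygy of Proposition~\ref{prop:syzygy}.

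\emph{Other coefficients.} The middle and constant coefficients reduce in the same way to multiples of the syzygy expression \(s_2M_0-s_1M_1+s_0M_2\). The constant coefficient is \(s_2M_0^{2}-s_1M_0M_1+s_0M_1^{2}\) against \(s_0(M_1^{2}-M_2M_0)\); the difference is \(M_0\cdot(\text{syzygy})\). The middle coefficient works similarly, with difference \(2M_1\cdot(\text{syzygy})\). This bookkeeping is the only computational step. It is routine, but it is also the step where sign errors can occur, so I would cross-check it with the symbolic verification of \S\ref{subsec:symbolic}.

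\emph{Degree and deck property.} The map \(\varphi_{a,b}\) has degree two because \(a,b\) are coprime quadratics, so \([a:b]\) has no common factor to cancel. The matrix of \(\iota\) has determinant \(-M_1^{2}+M_2M_0=-R\neq0\), so \(\iota\) is a Möbius transformation. Its trace is zero, so \(\iota^{2}\) is scalar and \(\iota\) is an involution. Applying \eqref{eq:scaling} to \(a\) and \(b\) gives \(a(\iota x)/b(\iota x)=a(x)/b(x)\), hence \(\varphi\circ\iota=\varphi\). It remains to rule out \(\iota=\mathrm{id}\). That would require \(M_2=0\) and \(M_1=0\), which forces \(R=0\), a contradiction. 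Since \(\varphi\) has degree two, its deck group has order two, so \(\iota\) is the nontrivial deck transformation.

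\emph{Fixed divisor.} The fixed points of \(\iota\) satisfy \(x(M_2x+M_1)=-M_1x-M_0\), that is, \(M_2x^{2}+2M_1x+M_0=0\). This is the bracket by Lemma~\ref{lem:bracket-minors}. When \(M_2=0\) the point at infinity is also fixed, which matches reading the bracket projectively as a binary quadratic form. Interpreting \(\iota\) projectively when a denominator vanishes is handled by the same homogeneous form of the identity.
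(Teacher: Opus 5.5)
Your proof is correct, but it runs in the opposite direction from the paper's.

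\textbf{The paper's route.} It starts from the fibre relation $a(x)b(z)-a(z)b(x)=(x-z)\,\Bez_{a,b}(x,z)$. Since $\Bez_{a,b}(x,z)$ is linear in $z$, solving it derives the formula for $\iota_{a,b}$ as ``the second point of the fibre,'' and setting $z=x$ gives the fixed divisor. The scaling identity is then obtained abstractly. Invariance of $a/b$ under $\iota$, together with coprimality, forces $a^{\iota}=c\,a$ and $b^{\iota}=c\,b$ with a common constant $c$. That constant is identified as $R$ by evaluating at the fixed points and using the Vieta product $(M_2\xi_1+M_1)(M_2\xi_2+M_1)=-R$.

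\textbf{Your route.} You verify \eqref{eq:scaling} first by coefficient comparison. Your three discrepancies, namely $M_2$, $2M_1$ and $M_0$ times $s_2M_0-s_1M_1+s_0M_2$, are correct. From the identity you deduce $\varphi\circ\iota=\varphi$, then nontriviality, then uniqueness from the deck group of a degree-two map.

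\textbf{What each buys.} Your argument is a polynomial identity valid over any base. It needs no algebraic closure and no case split. The paper's Vieta step tacitly assumes both fixed points are finite, i.e.\ $M_2\neq0$; when $M_2=0$ one needs the separate observation $c=M_1^{2}=R$ at the single finite fixed point. Your computation also shows that \eqref{eq:scaling} holds exactly for quadratics satisfying the minor syzygy, which ties the lemma neatly to Proposition~\ref{prop:syzygy}. The paper's argument derives rather than verifies the formula for $\iota_{a,b}$. That is the conceptual point it wants: the deck involution and its fixed divisor are read directly off the Bezoutian.

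\textbf{Two wording fixes.} First, $\iota\neq\mathrm{id}$ uses odd characteristic, to get $M_1=0$ from $-M_1=M_1$. Second, the deck group of a degree-two map has order \emph{at most} two, and $\iota$ supplies the nontrivial element.
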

	
	\begin{proof}
		Since \(a,b\) are coprime the pencil is base-point free and \(\varphi_{a,b}\) is
		a morphism of degree two. Two affine points \(x,z\) lie in the same fibre exactly
		when \(a(x)b(z)-a(z)b(x)=0\), which by \eqref{eq:bez} factors as
		\((x-z)\bigl(M_2xz+M_1(x+z)+M_0\bigr)=0\). For fixed \(x\) the second factor is
		linear in \(z\) with solution \eqref{eq:iota}; hence \eqref{eq:iota} gives the
		second point of a generic fibre and therefore the deck involution. Its fixed
		points are obtained by setting \(z=x\), giving the bracket.
		
		For \eqref{eq:scaling}, write \(s^{\iota}(x):=s(\iota_{a,b}(x))(M_2x+M_1)^{2}\),
		again a quadratic. Since \(\iota_{a,b}\) preserves every fibre of
		\(\varphi_{a,b}\), the ratio \(a/b\) is \(\iota_{a,b}\)-invariant, so
		\(a^{\iota}/b^{\iota}=a/b\); coprimality of \(a,b\) forces \(a^{\iota}=c\,a\) and
		\(b^{\iota}=c\,b\) for one and the same \(c\in K^{\times}\), and linearity extends
		this to the whole pencil. To identify \(c\), evaluate at a fixed point \(\xi\) of
		\(\iota_{a,b}\), which gives \(c=(M_2\xi+M_1)^{2}\). Writing \(\xi_1,\xi_2\) for
		the two roots of \(M_2x^{2}+2M_1x+M_0\), we have
		\[
		(M_2\xi_1+M_1)(M_2\xi_2+M_1)=M_2M_0-M_1^{2}=-R,
		\]
		while \((M_2\xi_1+M_1)^{2}=(M_2\xi_2+M_1)^{2}=c\) forces
		\(M_2\xi_1+M_1=-(M_2\xi_2+M_1)\) because \(\xi_1\neq\xi_2\). Hence
		\(c=-(M_2\xi_1+M_1)(M_2\xi_2+M_1)=R\), which is \eqref{eq:scaling}. Equivalently,
		\(-R\) is the determinant of the matrix representing \(\iota_{a,b}\).
	\end{proof}
	
	\begin{remark}\label{rem:readoffs}
		The evaluator of \S\ref{sec:certificate} uses only data from the minor coordinates of a single Bezoutian: the bracket is its diagonal
		\eqref{eq:bracket}; the coprimality test is its determinant \eqref{eq:bezdet};
		the pencil involution and its fixed quadratic are \eqref{eq:iota} and its
		numerator data; and the branch determinant is the minor syzygy \eqref{eq:syzygy}
		evaluated on the third quadratic (Proposition~\ref{prop:d-syzygy}). No quantity
		is computed twice.
	\end{remark}
	
	\section{The certificate and the evaluator}\label{sec:certificate}
	
	Proposition \ref{prop:soundness} yields a runtime test. Both
	hypotheses of that proposition are decided by quantities the step computes anyway, so certification requires no additional pass over the data.
	
	\subsection{The determinant from the minor syzygy}
	
	Proposition~\ref{prop:syzygy} states that the alternating sum
	\(a_2M_0-a_1M_1+a_0M_2\) vanishes when \((a_2,a_1,a_0)\) is either of the two
	quadratics generating the minors. Evaluated on the third quadratic it does not
	vanish; it returns the determinant.
	
	\begin{proposition}\label{prop:d-syzygy}
		Let \(u,v,w\in\Fq[x]\) be quadratics and let \((M_2,M_1,M_0)\) be the minors of
		the pair \((v,w)\). Then
		\begin{equation}\label{eq:d-syzygy}
			\dcoef(u,v,w)=-\bigl(u_0M_2-u_1M_1+u_2M_0\bigr).
		\end{equation}
		For monic \(u\) this costs \(2\mathrm{M}\) once the bracket \(U\) is known.
	\end{proposition}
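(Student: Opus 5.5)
The identity \eqref{eq:d-syzygy} is a cofactor expansion, so I will compute the determinant \eqref{eq:det} directly along its first row. I will write \(\dcoef\) for general (not necessarily monic) quadratics as the determinant of the matrix with rows \((u_0,u_1,u_2)\), \((v_0,v_1,v_2)\), \((w_0,w_1,w_2)\); the monic case is the specialisation \(u_2=v_2=w_2=1\). Laplace expansion along the first row gives
\[
\dcoef=u_0\,(v_1w_2-v_2w_1)-u_1\,(v_0w_2-v_2w_0)+u_2\,(v_0w_1-v_1w_0).
\]

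Next I take the minors \eqref{eq:minors} with \(a=v\) and \(b=w\), so that
\[
M_2=v_2w_1-v_1w_2,\qquad M_1=v_2w_0-v_0w_2,\qquad M_0=v_1w_0-v_0w_1 .
\]
Each \(2\times2\) cofactor in the expansion is then exactly the negative of one minor: \(v_1w_2-v_2w_1=-M_2\), \(v_0w_2-v_2w_0=-M_1\), and \(v_0w_1-v_1w_0=-M_0\). Substituting these gives \(\dcoef=-u_0M_2+u_1M_1-u_2M_0\), which is \eqref{eq:d-syzygy}.

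As a consistency check, the same alternating form vanishes when \(u\) is replaced by \(v\) or \(w\), by Proposition~\ref{prop:syzygy}. This matches the fact that a determinant with a repeated row is zero, and it explains why the syzygy evaluated on the third row recovers the determinant.

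For the cost claim, I will note that by Lemma~\ref{lem:bracket-minors} the bracket \(U=M_2x^{2}+2M_1x+M_0\) is computed from exactly these minors. So \(M_2,M_1,M_0\) are available once \(U\) is known. For monic inputs, Lemma~\ref{lem:aligned} shows that \(M_2=w_1-v_1\) and \(M_1=w_0-v_0\) are obtained by subtraction alone, so there is no need to halve the middle coefficient of \(U\). With \(u_2=1\), the term \(u_2M_0\) needs no multiplication. Only the products \(u_0M_2\) and \(u_1M_1\) remain, which is \(2\mathrm{M}\) plus additions.

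The only step needing care is the sign bookkeeping. The columns of \eqref{eq:det} are in ascending degree order, while the minors \eqref{eq:minors} are indexed as \(a_2b_\ast-a_\ast b_2\), and the convention in the paper fixes the overall sign of \(\dcoef\). I will therefore check each of the three cofactor identities explicitly rather than appeal to symmetry.
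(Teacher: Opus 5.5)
Your proof is correct and is the same argument as the paper's: a Laplace expansion of \eqref{eq:det} along the row of \(u\), with each \(2\times2\) cofactor identified as the negative of the corresponding minor of \((v,w)\). Your explicit check of the three cofactor signs and your cost accounting spell out what the paper's one-line proof leaves implicit. For the cost, with \(u_2=1\) the only new products are \(u_0M_2\) and \(u_1M_1\).
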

	
	\begin{proof}
		Expand \(\dcoef\) in \eqref{eq:det} along the row of \(u\). The three cofactors
		are, up to sign, the \(2\times2\) minors of the coefficient matrix of \((v,w)\),
		namely \(M_2,M_1,M_0\) in the order dictated by the ascending column convention
		of \eqref{eq:det}; collecting signs gives \eqref{eq:d-syzygy}.
	\end{proof}
	
	\begin{remark}\label{rem:same-expansion}
		Propositions~\ref{prop:syzygy} and~\ref{prop:d-syzygy} are the same determinant
		expansion applied to the two possible third rows: repeating a row of the
		coefficient matrix gives 0, adjoining the remaining quadratic gives \(\dcoef\).
		This is why no separate evaluation of \eqref{eq:det} is needed.
	\end{remark}
	
	\subsection{The certificate}
	
	\begin{definition}\label{def:sfsi}
		Let \(u,v,w\in\Fq[x]\) be monic quadratics with \(q\) odd. We call \((u,v,w)\) a
		\emph{square-free split input} if
		\[
		\Disc(u)\Disc(v)\Disc(w)\Res(u,v)\Res(v,w)\Res(w,u)\neq0,
		\]
		equivalently if \(f=uvw\) is square-free of degree~6.
	\end{definition}
	
	\begin{definition}\label{def:branch}
		For a square-free split input, \(\dcoef=\dcoef(u,v,w)\) of \eqref{eq:det} is
		called the \emph{branch determinant}. The alternatives \(\dcoef\neq0\) and
		\(\dcoef=0\) are the \emph{non-decomposable} and \emph{decomposable} branches.
	\end{definition}
	
	\begin{table}[t]
		\centering
		\caption{Split-input validation and quotient classification. All four rows are
			read off the same minor triples.}
		\label{tab:classification}
		\small
		\setlength{\tabcolsep}{3.5pt}
		\begin{tabular}{@{}l
				>{\raggedright\arraybackslash}p{0.28\linewidth}
				>{\raggedright\arraybackslash}p{0.19\linewidth}
				>{\raggedright\arraybackslash}p{0.21\linewidth}
				>{\raggedright\arraybackslash}p{0.13\linewidth}@{}}
			\toprule
			& Predicate & Computation & Meaning & Role\\
			\midrule
			\(C_1\) & \(\Delta_u\Delta_v\Delta_w\neq0\) & \(s_1^{2}-4s_0\) per factor
			& each factor separable & input validation\\
			\(C_2\) & \(\Res(u,v)\)\allowbreak\(\Res(v,w)\)\allowbreak\(\Res(w,u)\neq0\) & \(M_1^{2}-M_2M_0\) per pair
			& pairwise coprime & input validation\\
			\(C_3\) & \(\dcoef\neq0\) & minor syzygy \eqref{eq:d-syzygy}
			& non-decomposable quotient & genus-2 branch\\
			\(\lnot C_3\) & \(\dcoef=0\) & minor syzygy \eqref{eq:d-syzygy}
			& decomposable quotient & product branch\\
			\bottomrule
		\end{tabular}
	\end{table}
	
	The discriminants and pairwise resultants validate the displayed splitting; the
	branch determinant then determines the geometric type of the quotient. In
	particular \(\dcoef=0\) does not invalidate the input. Table~\ref{tab:classification}
	collects the four predicates.
	
	\begin{remark}\label{rem:no-guard}
		There is no guard on the degree of a bracket. An aligned pair produces a linear
		bracket while \(\dcoef\neq0\); then \(UVW\) has degree 5 and still defines a
		smooth genus-2 codomain, at the exact density \(3/q+O(q^{-2})\) among admissible
		triples (Corollary~\ref{cor:densities}). The phenomenon itself is not new: the
		standard formulation of the quadratic-splitting correspondence admits a
		square-free polynomial of degree 5 or 6, one of the three factors being linear in
		the quintic case \cite[\S2.1]{FloritSmith2022atlas}. The certificate provides the coefficient-level form of the same tolerance: the locus is admitted without a change of model and without an explicit test.
	\end{remark}
	
	\subsection{The output post-check is redundant}
	
	The output triple may also be verified: its factors have the expected degrees, are pairwise coprime, and have nonzero discriminants. Under the
	certificate this verification cannot fail.
	
	\begin{proposition}\label{prop:postcheck}
		Let \((u,v,w)\) be monic quadratics over \(\Fq\), \(q\) odd, satisfying
		\(C_1,C_2,C_3\) of Table~\ref{tab:classification}, and let \((U,V,W)\) be the
		bracket triple. Then \(U,V,W\) are pairwise coprime, none has a repeated root, at
		most one has degree 1 and the other two have degree 2, and \(UVW\) is square-free
		of degree 5 or 6. An output post-check consisting of these three tests is
		therefore implied by \(C_1\)--\(C_3\) and can be omitted.
	\end{proposition}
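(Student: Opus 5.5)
The plan is to reduce the statement to Proposition~\ref{prop:soundness}(i)--(ii), whose hypotheses are exactly \(C_1\), \(C_2\), \(C_3\). By Definition~\ref{def:sfsi}, \(C_1\wedge C_2\) says that \((u,v,w)\) is a square-free split input: \(f=uvw\) is square-free of degree six, the factors are pairwise coprime and each has nonzero discriminant. \(C_3\) is \(\dcoef\neq0\). It then remains to check, item by item, that each post-check test was already established inside the proof of that proposition, and to make explicit the step there that is only sketched.

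\emph{No repeated roots.} I would take the quadratic outputs first. By Lemma~\ref{lem:three-identities}(a), \(\Disc(U)=4\Res(v,w)\), and this is nonzero by \(C_2\) since \(p>2\); the same holds cyclically for \(V\) and \(W\). A linear output has no repeated root. One should also rule out \(U\equiv0\). This follows because the minors of \((v,w)\) cannot all vanish: by Lemma~\ref{lem:aligned}, \(M_2=M_1=0\) forces \(v=w\).

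\emph{Pairwise coprimality.} This is the resultant argument in the proof of Proposition~\ref{prop:soundness}(i). Suppose \(U(r)=V(r)=0\). Lemma~\ref{lem:three-identities}(b) gives \(-2\dcoef\,w(r)=0\), hence \(w(r)=0\) because \(\dcoef\neq0\) and \(p>2\). Substituting into \(U(r)=0\) gives \(v(r)w'(r)=0\). The case \(v(r)=0\) contradicts \(C_2\), and the case \(w'(r)=0\) contradicts \(\Delta_w\neq0\) in \(C_1\). The other two pairs follow cyclically.

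\emph{Degrees.} The leading coefficients of \(U,V,W\) are \(w_1-v_1\), \(u_1-w_1\) and \(v_1-u_1\). If two of them vanished, then \(u_1=v_1=w_1\), the middle column of \eqref{eq:det} would be a multiple of the last, and \(\dcoef=0\), contradicting \(C_3\). So at most one output has degree below two. By Lemma~\ref{lem:aligned} with coprimality, such an output has degree exactly one, not zero, and the others have degree two.

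Combining these, \(UVW\) is a product of pairwise coprime, individually square-free factors of total degree \(5\) or \(6\), hence square-free. So every component of the post-check is a logical consequence of \(C_1\)--\(C_3\) and can be omitted.

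The only point needing care, and the one I expect to be the main obstacle, is the coprimality step in the presence of a linear output. Lemma~\ref{lem:three-identities}(b) is a polynomial identity in the coefficients and so holds regardless of degrees, but the argument must also handle a common root at infinity. I would treat that separately: two outputs cannot both drop degree, so they never share a root at infinity, and the affine argument above covers everything else.
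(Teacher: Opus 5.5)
Your proof is correct and follows the same route as the paper, which also refers back to the argument for Proposition~\ref{prop:soundness}(i): no repeated roots from \(C_2\) via Lemma~\ref{lem:three-identities}(a), no shared roots from \(C_3\) via Lemma~\ref{lem:three-identities}(b) together with \(C_1\) and \(C_2\), and the degree bound from \(C_3\) by the equal-middle-coefficients argument. Your extra details are valid refinements that the paper leaves implicit: ruling out \(U\equiv0\), getting degree exactly one from Lemma~\ref{lem:aligned}, and excluding a common root at infinity because two leading coefficients cannot vanish together; the last is not needed for coprimality in \(\Fq[x]\), but it is what one would check if the outputs are read as binary quadratic forms.
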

	
	\begin{proof}
		This is the content of the proof of Proposition~\ref{prop:soundness}(i): absence
		of repeated roots follows from \(C_2\) through
		Lemma~\ref{lem:three-identities}(a); absence of shared roots from \(C_3\) through
		Lemma~\ref{lem:three-identities}(b) together with \(C_1\) and \(C_2\); and the
		degree statement from \(C_3\) as in the degree paragraph of that proof. No step
		uses the output values themselves.
	\end{proof}
	
	\begin{remark}\label{rem:postcheck-cost}
		The redundancy is not marginal. Evaluated directly, the three output tests cost
		three discriminants, three resultants and three degree comparisons, roughly
		doubling the cost of the certified step. Earlier versions of this work charged
		that cost and described it as defence in depth; Proposition~\ref{prop:postcheck}
		shows it buys nothing.
	\end{remark}
	
	\subsection{Affine invariance}
	
	A translation \(x\mapsto x+\delta\) is a natural attempt when a predicate fails. It does not change the verdict.
	
	\begin{proposition}\label{prop:shift}
		Let \(u,v,w\in\Fq[x]\) be monic quadratics, let \(\delta\in\Fq\), and write
		\(\tilde s(x)=s(x+\delta)\). Then
		\[
		\Disc(\tilde s)=\Disc(s),\qquad \Res(\tilde u,\tilde v)=\Res(u,v),\qquad
		\dcoef(\tilde u,\tilde v,\tilde w)=\dcoef(u,v,w),
		\]
		and the brackets satisfy \([\tilde u,\tilde v](x)=[u,v](x+\delta)\).
		Consequently every predicate of Table~\ref{tab:classification}, every bracket
		degree, and every coprimality relation among \(U,V,W\) takes the same value
		before and after the shift.
	\end{proposition}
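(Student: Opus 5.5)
The plan is to reduce everything to the translation formulas of Lemma~\ref{lem:translate}, applied with \(\lambda=\delta\). That lemma already gives the bracket identity \([\tilde u,\tilde v](x)=[u,v](x+\delta)\) directly, so only the three scalar invariances need argument. For the discriminant, write \(\tilde s(x)=x^{2}+(s_1+2\delta)x+(s_0+\delta s_1+\delta^{2})\). Then \(\Disc(\tilde s)=(s_1+2\delta)^{2}-4(s_0+\delta s_1+\delta^{2})=s_1^{2}-4s_0\). Equivalently, the roots of \(\tilde s\) are the roots of \(s\) shifted by \(-\delta\), so their differences, and hence the discriminant of a monic polynomial, are unchanged.

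For the resultant I would use the Bezoutian form \(\Res=M_1^{2}-M_2M_0\) from Lemma~\ref{lem:bezoutian}, together with the transformation rules of Lemma~\ref{lem:translate}:
\[
(M_1+\delta M_2)^{2}-M_2\bigl(M_0+2\delta M_1+\delta^{2}M_2\bigr)=M_1^{2}-M_2M_0 .
\]
Alternatively, \(\Res(u,v)=\prod(\alpha_i-\beta_j)\) for monic \(u,v\) with roots \(\alpha_i,\beta_j\), and a common shift of all roots leaves each difference unchanged.

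For \(\dcoef\), the cleanest route is linear algebra. The substitution \(s\mapsto\tilde s\) is a linear map on the space of quadratics. In the ascending basis \((1,x,x^{2})\), the coefficient row \((s_0,s_1,s_2)\) is multiplied on the right by a matrix \(T_\delta\), which is unitriangular because \((x+\delta)^{k}=x^{k}+(\text{lower terms})\). Hence the coefficient matrix of \((\tilde u,\tilde v,\tilde w)\) equals the coefficient matrix of \((u,v,w)\) times \(T_\delta\), and \(\det T_\delta=1\) gives \(\dcoef(\tilde u,\tilde v,\tilde w)=\dcoef(u,v,w)\). As a cross-check, Proposition~\ref{prop:d-syzygy} combined with Lemma~\ref{lem:translate} gives the same conclusion.

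The consequences then follow directly. Predicates \(C_1\), \(C_2\) and \(C_3\) are nonvanishing conditions on quantities just shown to be invariant. The bracket of the shifted pair is a reparametrisation of the original bracket by \(x\mapsto x+\delta\), which preserves degree (its leading coefficient \(M_2\) is unchanged). The same reparametrisation carries common roots of the brackets bijectively, via \(r\mapsto r-\delta\), so every coprimality relation among \(U,V,W\) is preserved.

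I expect no real obstacle here. The only place to be careful is the sign and orientation of \(T_\delta\) under the ascending column convention of \eqref{eq:det}, and unitriangularity makes its determinant \(1\) regardless of that orientation.
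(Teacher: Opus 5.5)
Your proposal is correct and follows essentially the same route as the paper: root differences for \(\Disc\) and \(\Res\) (you also give the direct Bezoutian check through Lemma~\ref{lem:translate}), Lemma~\ref{lem:translate} for the bracket identity, and substitution into \eqref{eq:det} for \(\dcoef\). Your description of the last step as right multiplication of the coefficient matrix by a unitriangular \(T_\delta\) states precisely what the paper calls ``row reduction'' (it is really a sequence of column operations), and you also spell out the consequences for the predicates, bracket degrees and coprimality, which the paper leaves implicit.
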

	
	\begin{proof}
		Discriminants and resultants are expressible through differences of roots and
		through values of one polynomial at the roots of the other, both unchanged by a
		translation of the variable. For \(\dcoef\), substitute
		\(\tilde u_1=u_1+2\delta\) and \(\tilde u_0=u_0+u_1\delta+\delta^{2}\), and
		similarly for \(v\) and \(w\), into \eqref{eq:det}; row reduction leaves the
		value unchanged. The bracket statement is Lemma~\ref{lem:translate}. All three
		identities are among the symbolic checks of \S\ref{subsec:symbolic}.
	\end{proof}
	
	\begin{remark}\label{rem:no-retry}
		Proposition~\ref{prop:shift} removes the affine retry used in earlier versions of
		this work. Every condition that could trigger a retry is invariant under the
		shift, so a retry repeats the same computation and reaches the same verdict. What
		a shift does change is the middle coefficient \(2M_1\) of a bracket, which by
		Lemma~\ref{lem:translate} moves as \(M_1\mapsto M_1+\delta M_2\) and therefore
		vanishes for at most one \(\delta\) per pair; but
		\(\Disc[u,v]=4\Res(u,v)\) is independent of \(M_1\), so the vanishing of a middle
		coefficient is not a degeneracy and needs no repair.
	\end{remark}
	
	\subsection{The evaluator}
	
	\begin{algorithm}[H]
		\caption{Certified Richelot \((2,2)\)-step, all branches.}
		\label{alg:richelot}
		\begin{algorithmic}[1]
			\Require Monic quadratics \(u,v,w\in\Fq[x]\) of degree 2, \(q\) odd
			\Ensure A tagged output: \textsc{Invalid}, \textsc{Genus2} or \textsc{Decomposable}
			\For{each cyclic pair \((a,b)\in\{(v,w),(w,u),(u,v)\}\)}
			\State \(M_2\gets b_1-a_1\);\quad \(M_1\gets b_0-a_0\);\quad \(M_0\gets a_1b_0-a_0b_1\)
			\State store \((M_2,M_1,M_0)\) and emit the bracket \(M_2x^{2}+2M_1x+M_0\)
			\State \textbf{if} \(M_1^{2}-M_2M_0=0\) \textbf{then return} \textsc{Invalid}\((C_2)\)
			\EndFor
			\State \textbf{if} \(u_1^{2}-4u_0=0\) or \(v_1^{2}-4v_0=0\) or \(w_1^{2}-4w_0=0\)
			\textbf{then return} \textsc{Invalid}\((C_1)\)
			\State \(\dcoef\gets-\bigl(u_0M_2^{(U)}-u_1M_1^{(U)}+M_0^{(U)}\bigr)\)
			\Comment{Proposition~\ref{prop:d-syzygy}}
			\If{\(\dcoef\neq0\)}
			\State \Return \textsc{Genus2}\(\bigl(C'\colon y^{2}=\dcoef^{-1}UVW\bigr)\)
			\Comment{\(\deg(UVW)\in\{5,6\}\)}
			\Else
			\State \Return \textsc{Decomposable}\((\mathcal D)\) as in Theorem~\ref{thm:split}
			\EndIf
		\end{algorithmic}
	\end{algorithm}
	
	\begin{theorem}[Classification by the certificate]\label{thm:certificate}
		Let \(u,v,w\in\Fq[x]\) be monic quadratics of degree 2, with \(q\) odd.
		Algorithm~\ref{alg:richelot} returns exactly one of the three tagged outputs
		\(\textsc{Invalid}(\mathrm{reason})\), \(\textsc{Genus2}(C',\phi)\),
		\(\textsc{Decomposable}(\mathcal D)\). It returns \textsc{Invalid}, together with
		the failed discriminant or resultant predicate, if and only if \(uvw\) is not
		square-free of degree six. For a square-free split input it returns
		\textsc{Genus2} if and only if \(\dcoef\neq0\), the output consisting of the
		normalized genus-2 model \(y^{2}=\dcoef^{-1}UVW\) of degree five or six together
		with its classical Richelot \((2,2)\)-isogeny. It returns \textsc{Decomposable} if
		and only if \(\dcoef=0\), the output consisting of the common pencil involution
		with its fixed quadratic, the splitting class, and the unordered pair of elliptic
		factors of Theorem~\ref{thm:split} with their covers.
	\end{theorem}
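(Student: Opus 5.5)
The proof traces the control flow of Algorithm~\ref{alg:richelot} and matches each exit to the predicates of Table~\ref{tab:classification}. Every path ends at exactly one \textbf{return}: in the loop at a $C_2$ failure, at the discriminant test, or at the final if/else. Hence the algorithm is total on monic quadratic triples and emits exactly one tagged output.

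\emph{The \textsc{Invalid} exit.} In the loop, the stored minors of each pair $(a,b)$ are the monic specialization of \eqref{eq:minors}, by Lemma~\ref{lem:aligned}. By Lemma~\ref{lem:bezoutian} the quantity $M_1^{2}-M_2M_0$ equals $\Res(a,b)$. The discriminant line computes $\Disc$ of each factor. So the algorithm reaches the line computing $\dcoef$ if and only if
\[
\Disc(u)\Disc(v)\Disc(w)\Res(u,v)\Res(v,w)\Res(w,u)\neq0 .
\]
For monic factors this is equivalent to $uvw$ being square-free of degree six: the product has degree six automatically, a repeated root is either a double root of one factor or a root shared by two factors, and the converse is immediate. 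Definition~\ref{def:sfsi} records exactly this equivalence. The algorithm returns the first predicate that fails, which gives the stated reason.

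\emph{The branch value.} I will check that the line computing $\dcoef$ really computes \eqref{eq:det}. The minors used are those of the pair $(v,w)$, which produce $U$. Proposition~\ref{prop:d-syzygy} with $u_2=1$ gives $\dcoef=-(u_0M_2-u_1M_1+M_0)$, which is the formula in the algorithm. The one point needing care is that the cyclic order $(a,b)=(v,w)$ matches the sign convention of that proposition. A sign error would only affect the twist, but since the model is normalized by $\dcoef$ the convention should be verified. Given this, for a square-free split input the algorithm returns \textsc{Genus2} if and only if $\dcoef\neq0$, and \textsc{Decomposable} otherwise.

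\emph{Content of the outputs.} On the \textsc{Genus2} branch, Proposition~\ref{prop:soundness}(i) gives that $UVW$ is square-free of degree five or six and that $y^{2}=\dcoef^{-1}UVW$ is a smooth genus-2 curve. It also supplies the classical $\Fq$-rational $(2,2)$-isogeny $\phi$. Proposition~\ref{prop:postcheck} confirms that no further test is needed. On the \textsc{Decomposable} branch, the output is whatever Theorem~\ref{thm:split} produces, so that part of the proof is by forward reference. Proposition~\ref{prop:soundness}(iii) shows the three quadratics lie in one pencil with proportional brackets, so the pencil involution $\iota$ of Lemma~\ref{lem:deck} and its fixed quadratic are common to all three pairs. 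The splitting class and the elliptic factors come from Proposition~\ref{prop:class-line} and Theorem~\ref{thm:split}. The main obstacle is this dependence on later results: the claim that this data is well defined independently of the chosen pair rests on Proposition~\ref{prop:class-line}, which I would invoke rather than reprove here.
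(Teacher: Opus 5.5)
Your proposal is correct and follows the paper's own proof. The paper likewise matches the \textsc{Invalid} exit to Definition~\ref{def:sfsi}, identifies the computed value with \(\dcoef\) through Proposition~\ref{prop:d-syzygy}, and discharges the two branches by Propositions~\ref{prop:soundness} and~\ref{prop:postcheck} and by forward reference to the pencil results and Theorem~\ref{thm:split}. You cite Proposition~\ref{prop:soundness}(iii) and Proposition~\ref{prop:class-line} where the paper cites Proposition~\ref{prop:pencil}, but this is only a change of citation. The sign check you left open does go through. For \((a,b)=(v,w)\) the algorithm stores \(M_2=w_1-v_1\), \(M_1=w_0-v_0\), \(M_0=v_1w_0-v_0w_1\), and expanding \eqref{eq:det} along the row of \(u\) gives \(u_0(v_1-w_1)-u_1(v_0-w_0)+(v_0w_1-v_1w_0)=-(u_0M_2-u_1M_1+M_0)\).
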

	
	\begin{proof}
		The discriminant and resultant tests are equivalent to square-freeness of
		\(uvw\), so the rejection path is taken exactly when the input is invalid. On
		square-free split input the determinant is computed by
		Proposition~\ref{prop:d-syzygy}. If \(\dcoef\neq0\),
		Proposition~\ref{prop:soundness} gives the smooth normalized genus-2 codomain,
		and Proposition~\ref{prop:postcheck} shows that no output test is needed. If
		\(\dcoef=0\), Proposition~\ref{prop:pencil} and Theorem~\ref{thm:split}
		construct the decomposable datum. The three cases are mutually exclusive and
		exhaustive.
	\end{proof}
	
	\begin{proposition}\label{prop:equivariance}
		Let \(\alpha(x)=ax+b\) with \(a\in\Fq^{\times}\), and replace each monic factor
		\(s\) by its monic affine transform \(s^{\alpha}(x)=a^{-2}s(\alpha(x))\). The
		zero--nonzero status of every input discriminant, of every pairwise resultant,
		and of \(\dcoef\) is preserved. The same holds under every permutation of
		\((u,v,w)\). Consequently the three output tags of
		Theorem~\ref{thm:certificate} are invariant under these operations.
	\end{proposition}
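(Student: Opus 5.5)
I would prove Proposition~\ref{prop:equivariance} by tracking how each quantity transforms under the generators of the affine group, and then observing that every transformation factor is a nonzero unit. Every affine map \(\alpha(x)=ax+b\) factors as a translation \(x\mapsto x+b/a\) followed by a scaling \(x\mapsto ax\), since \(ax+b=a(x+b/a)\). Proposition~\ref{prop:shift} already treats translations: there \(\Disc\), \(\Res\) and \(\dcoef\) are unchanged exactly, and a translation preserves monicity, so \(a^{-2}\) plays no role. It therefore remains to treat the scaling \(s\mapsto s^{\alpha}(x)=a^{-2}s(ax)\).

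For the scaling, write \(s=x^{2}+s_1x+s_0\). Then \(s^{\alpha}=x^{2}+a^{-1}s_1x+a^{-2}s_0\), which is again monic.

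\begin{itemize}
\item The discriminant becomes \(a^{-2}s_1^{2}-4a^{-2}s_0=a^{-2}\Disc(s)\).
\item The roots of \(s^{\alpha}\) are the roots of \(s\) divided by \(a\). The resultant of two monic quadratics is the product of the four root differences, so \(\Res(u^{\alpha},v^{\alpha})=a^{-4}\Res(u,v)\). Alternatively, one can use \(M_1^{2}-M_2M_0\) from Lemma~\ref{lem:bezoutian}, since the minors scale as \(M_2\mapsto a^{-1}M_2\), \(M_1\mapsto a^{-2}M_1\), \(M_0\mapsto a^{-3}M_0\).
\item For \(\dcoef\), the coefficient matrix in \eqref{eq:det} has its column \(s_0\) multiplied by \(a^{-2}\), its column \(s_1\) multiplied by \(a^{-1}\), and its column of ones unchanged. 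Multilinearity in columns then gives \(\dcoef\mapsto a^{-3}\dcoef\).
\end{itemize}

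All factors are units, so the zero--nonzero status of each quantity is preserved. Composing with the translation case covers all of \(\Aff(\Fq)\).

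For permutations of \((u,v,w)\), the multiset of discriminants and of pairwise resultants is permuted. Each resultant changes at most by the sign \(\Res(b,a)=(-1)^{4}\Res(a,b)=\Res(a,b)\) for quadratics. Permuting the rows of the matrix in \eqref{eq:det} multiplies \(\dcoef\) by the sign of the permutation.

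The conclusion about tags then follows from Theorem~\ref{thm:certificate}. The tag is determined by whether some discriminant or resultant vanishes (\textsc{Invalid}) and otherwise by whether \(\dcoef\) vanishes. The reason attached to \textsc{Invalid} may change under a permutation, but the tag itself does not.

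The only step needing care is the \(\dcoef\) scaling, because a stray sign or power could mislead. However, only nonvanishing of the factor matters, so the column-multilinearity argument suffices without exact bookkeeping.
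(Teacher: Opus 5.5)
Your proof is correct, but it takes a somewhat different route from the paper on the affine part; the permutation part is the same as the paper's.

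\textbf{How the paper argues.} It treats a general \(\alpha(x)=ax+b\) in one stroke.
\begin{enumerate}
\item The substitution bijects the roots of every factor. Separability and pairwise coprimality are therefore preserved, and with them the zero status of every \(\Disc\) and \(\Res\).
\item The map \(s\mapsto a^{-2}s(\alpha(x))\) is a single invertible linear map on the space of quadratics, applied to every row of the coefficient matrix. So \(\dcoef\) is multiplied by the determinant of that map, which is nonzero.
\end{enumerate}

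\textbf{How you argue.} You factor \(\alpha\) into a translation and a scaling. You import exact translation invariance from Proposition~\ref{prop:shift}, which precedes this statement, so there is no circularity. You then compute the scaling factors by hand.

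\textbf{What each route buys.} The paper's version is shorter, needs no case split and no appeal to Proposition~\ref{prop:shift}. Its linear-algebra step also works verbatim for any invertible substitution preserving the space of quadratics. Your version gives the exact transformation laws \(\Disc\mapsto a^{-2}\Disc\), \(\Res\mapsto a^{-4}\Res\) and \(\dcoef\mapsto a^{-3}\dcoef\). The last one agrees with the paper's map: in the basis \((1,x,x^{2})\) it is triangular with diagonal \((a^{-2},a^{-1},1)\). These exact laws prove more than zero--nonzero invariance. For example, the square classes of the pairwise resultants are \(\Aff(\Fq)\)-invariant, and hence so is the splitting class of Proposition~\ref{prop:pencil}.

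\textbf{A small point on the tags.} Your hedge that the reason attached to \textsc{Invalid} may change under a permutation is over-cautious. ``Some resultant vanishes'' is a symmetric condition, so the label \(C_1\) or \(C_2\) itself is invariant. Only the identity of the offending pair can change.
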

	
	\begin{proof}
		Affine substitution bijects the roots of every factor, preserving separability and
		pairwise coprimality. On coefficient vectors it is a common invertible linear
		change of basis in the space of quadratics, so the coefficient determinant is
		multiplied by a nonzero scalar. A permutation permutes the discriminants and
		resultants and changes \(\dcoef\) by its sign.
	\end{proof}
	
	\begin{remark}\label{rem:scope-alg}
		Four points delimit what Theorem~\ref{thm:certificate} does and does not assert.
		First, it classifies a displayed monic quadratic splitting; it does not classify
		sextic models before a splitting is supplied, and it does not replace kernel
		enumeration in an isogeny graph. A graph-enumeration routine may, however, retain
		every kernel it produces and route each quotient to its target category by tag
		instead of discarding the non-generic ones. Second, lines 1--7 of
		Algorithm~\ref{alg:richelot} are straight-line arithmetic, and on square-free
		split input the only data-dependent branch is the test \(\dcoef\neq0\). This is a
		statement about control flow, not a constant-time claim: the outcome of the test
		is a function of the input, so a deployment with secret inputs must replace the
		branch by a conditional move and evaluate both branches. Third, the test on
		\(\dcoef\) cannot be inferred from the other predicates; the second example of
		Remark~\ref{rem:counterexamples} is square-free, has three nonzero pairwise
		resultants and three quadratic brackets, yet has \(\dcoef=0\), while generic
		triples show the same pattern with \(\dcoef\neq0\). Fourth, by
		Proposition~\ref{prop:equivariance} the tag is an invariant of the splitting
		rather than of its presentation.
	\end{remark}
	
	\section{The incidence dictionary and the stratification}\label{sec:strat}
	
	The certificate admits a geometric refinement. Over a field of odd characteristic
	the admissible space of ordered monic quadratic triples is stratified by the
	degree of the non-decomposable output and by the decomposable pencil divisor, and
	the resulting normal forms are also the coordinates in which the counts of
	\S\ref{sec:counts} are proved.
	
	\subsection{The admissible parameter space}
	
	Let \(K\) be a field of characteristic different from 2 and write
	\[
	u=x^{2}+u_1x+u_0,\qquad v=x^{2}+v_1x+v_0,\qquad w=x^{2}+w_1x+w_0 .
	\]
	Let \(\Adm_K\subset\A^{6}_K\) be the open locus on which
	\[
	\Disc(u)\,\Disc(v)\,\Disc(w)\,\Res(u,v)\,\Res(v,w)\,\Res(w,u)\neq0,
	\]
	so that
	\(\Adm_K\) parametrizes ordered monic quadratic triples whose product is
	square-free of degree six. Put \(\Dec_K=V(\dcoef)\cap\Adm_K\) and define the three
	alignment loci
	\[
	L_{uv}=V(u_1-v_1)\cap\Adm_K,\quad
	L_{vw}=V(v_1-w_1)\cap\Adm_K,\quad
	L_{wu}=V(w_1-u_1)\cap\Adm_K .
	\]
	By Lemma~\ref{lem:aligned}, membership in \(L_{ab}\) is exactly the degree drop of
	the bracket attached to \((a,b)\).
	
	\subsection{Normal forms}
	
	\begin{lemma}\label{lem:pencil-normal}
		Let \((u,v,w)\in\Adm_K\). Then \(\dcoef=0\) if and only if there exist unique
		\(g\in K[x]_{\le1}\setminus\{0\}\) and \(\lambda\in K\setminus\{0,1\}\) with
		\[
		v=u+g,\qquad w=u+\lambda g .
		\]
		In this presentation the three pairwise coprimality conditions are equivalent to
		\(\gcd(u,g)=1\).
	\end{lemma}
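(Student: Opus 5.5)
The plan is to read the statement in terms of coefficient vectors. Put \(g:=v-u\). Since \(u,v\) are monic, \(g\in K[x]_{\le1}\), and \(g\neq0\) because \(u\) and \(v\) are coprime (equal monic quadratics would share both roots). The determinant \eqref{eq:det} is unchanged by subtracting the row of \(u\) from the other two rows. This gives
\[
\dcoef=\det\begin{pmatrix}u_0&u_1&1\\ g_0&g_1&0\\ h_0&h_1&0\end{pmatrix}=g_0h_1-g_1h_0,\qquad h:=w-u\in K[x]_{\le1}.
\]
So \(\dcoef=0\) if and only if the vectors \((g_0,g_1)\) and \((h_0,h_1)\) are linearly dependent.

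For the forward direction, \(g\neq0\) means \(\dcoef=0\) is equivalent to \(h=\lambda g\) for a unique \(\lambda\in K\). I would then exclude the two bad values:
\begin{itemize}
\item \(\lambda=0\) gives \(w=u\), contradicting \(\Res(w,u)\neq0\);
\item \(\lambda=1\) gives \(w=v\), contradicting \(\Res(v,w)\neq0\).
\end{itemize}
The converse is immediate, since \(h=\lambda g\) makes the two lower rows dependent. Uniqueness of \(g\) is forced because \(g=v-u\). Uniqueness of \(\lambda\) follows from \(g\neq0\).

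For the coprimality statement, I would use that gcds are unchanged by subtracting multiples:
\[
\gcd(u,v)=\gcd(u,u+g)=\gcd(u,g),\qquad \gcd(u,w)=\gcd(u,\lambda g)=\gcd(u,g),
\]
the second using \(\lambda\neq0\). The remaining pair is \(\gcd(v,w)\). From \(w-v=(\lambda-1)g\) with \(\lambda\neq1\) we get \(\gcd(v,w)=\gcd(v,g)=\gcd(u+g,g)=\gcd(u,g)\). Hence all three pairwise gcds equal \(\gcd(u,g)\), and the three coprimality conditions are together equivalent to \(\gcd(u,g)=1\).

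The only point needing care is the degenerate case where \(g\) is a nonzero constant. Then every gcd is \(1\), which is consistent with the claim. I expect no real obstacle: the main thing is to keep track of which hypothesis of \(\Adm_K\) excludes \(\lambda\in\{0,1\}\) and \(g=0\). In fact only coprimality is used, not separability.
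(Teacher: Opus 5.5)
Your proposal is correct and follows the paper's proof essentially step by step. You subtract the row of \(u\) to reduce \(\dcoef=0\) to linear dependence of \(g=v-u\) and \(w-u\) in \(K[x]_{\le1}\), exclude \(\lambda\in\{0,1\}\) by coprimality, and reduce all three pairwise gcds to \(\gcd(u,g)\) using \(\lambda\neq0\) and \(\lambda-1\neq0\); your explicit formula \(\dcoef=g_0h_1-g_1h_0\) simply spells out the paper's row-reduction remark.
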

	
	\begin{proof}
		Since the three quadratics are monic, \(g=v-u\) has degree at most one; it is
		nonzero because \(u\) and \(v\) are coprime. The equality \(\dcoef=0\) says that
		the three coefficient rows are linearly dependent; subtracting the row of \(u\)
		from the other two shows that \(v-u\) and \(w-u\) are linearly dependent in
		\(K[x]_{\le1}\). Hence \(w-u=\lambda g\) for a unique \(\lambda\in K\), and
		coprimality gives \(w\neq u,v\), so \(\lambda\neq0,1\). The converse is immediate.
		Finally
		\[
		\gcd(u,u+\lambda g)=\gcd(u,g),\qquad
		\gcd(u+g,u+\lambda g)=\gcd(u+g,g)=\gcd(u,g),
		\]
		where the nonzero scalars \(\lambda\) and \(\lambda-1\) may be omitted.
	\end{proof}
	
	\begin{lemma}\label{lem:align-normal}
		A triple in \(L_{uv}\) has a unique presentation
		\[
		u=x^{2}+ax+b,\qquad v=x^{2}+ax+c,\qquad w=x^{2}+ex+f .
		\]
		On \(\Adm_K\) one has \(b\neq c\) and \(\dcoef=(a-e)(b-c)\). Consequently
		\(L_{uv}\setminus\Dec_K\) is described in these coordinates by \(b\neq c\),
		\(e\neq a\), and the open conditions defining \(\Adm_K\).
	\end{lemma}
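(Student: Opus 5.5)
The argument is a direct coordinate computation in three short steps.

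For the presentation, note that membership in \(L_{uv}\) means \(u_1=v_1\). Set \(a:=u_1=v_1\), \(b:=u_0\), \(c:=v_0\), \(e:=w_1\), \(f:=w_0\). Uniqueness is immediate, since these are just the coefficients of the monic triple and the map from triples to \((a,b,c,e,f)\) is a bijection onto \(L_{uv}\).

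Next, \(b\neq c\) on \(\Adm_K\). If \(b=c\) then \(u=v\), so \(\Res(u,v)=0\), which contradicts membership in \(\Adm_K\). Equivalently, one can invoke Lemma~\ref{lem:aligned}: coprimality with \(M_2=0\) forces \(M_1=c-b\neq0\).

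For the determinant, substitute into \eqref{eq:det}:
\[
\dcoef=\det\begin{pmatrix}b&a&1\\c&a&1\\f&e&1\end{pmatrix}.
\]
Subtract the first row from the second to get \((c-b,0,0)\), and expand along that row. This gives \(\dcoef=(c-b)\cdot(-1)^{2+1}(a\cdot1-1\cdot e)=-(c-b)(a-e)=(a-e)(b-c)\). As a cross-check, Proposition~\ref{prop:d-syzygy} with the minors of \((v,w)\) should give the same answer; I will be careful with the cofactor sign convention, since that is the only place an error could enter.

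Finally, the description of \(L_{uv}\setminus\Dec_K\) follows. Since \(b\neq c\) holds automatically on \(\Adm_K\), the factorization shows that \(\dcoef\neq0\) is equivalent to \(e\neq a\). So \(L_{uv}\setminus\Dec_K\) is cut out by \(b\neq c\), \(e\neq a\), and the remaining open conditions of \(\Adm_K\) written in these coordinates. The only point needing care is the sign in the determinant expansion; everything else is bookkeeping.
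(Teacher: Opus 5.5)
Your proposal is correct and follows the paper's proof. You read off the coordinates from \(u_1=v_1\), obtain \(b\neq c\) from coprimality, and evaluate the ascending-convention determinant to \((a-e)(b-c)\). Your row reduction (whose signs check out) and the cross-check via Proposition~\ref{prop:d-syzygy} just make that one-line expansion explicit.
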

	
	\begin{proof}
		The equality \(u_1=v_1\) gives the displayed form uniquely, and \(b\neq c\)
		because \(u\) and \(v\) are coprime hence distinct. Expansion in the ascending
		convention of \eqref{eq:det} gives
		\[
		\dcoef=\det\begin{pmatrix}b&a&1\\c&a&1\\f&e&1\end{pmatrix}=(a-e)(b-c).\qedhere
		\]
	\end{proof}
	
	\begin{lemma}\label{lem:align-int}
		One has \(L_{uv}\cap L_{vw}\subseteq\Dec_K\), and likewise for the other two
		pairwise intersections. Hence \(L_{uv}\setminus\Dec_K\), \(L_{vw}\setminus\Dec_K\)
		and \(L_{wu}\setminus\Dec_K\) are pairwise disjoint.
	\end{lemma}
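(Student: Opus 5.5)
If a triple lies in \(L_{uv}\cap L_{vw}\), then \(u_1=v_1\) and \(v_1=w_1\), so all three middle coefficients coincide. In the matrix of \eqref{eq:det} the middle column is then \(u_1\) times the last column \((1,1,1)^{T}\). Hence \(\dcoef=0\), and the triple lies in \(\Dec_K\). The same column argument applies to \(L_{vw}\cap L_{wu}\) and to \(L_{wu}\cap L_{uv}\), since any two of the three alignment equalities force \(u_1=v_1=w_1\). This is the observation already used in the degree paragraph of the proof of Proposition~\ref{prop:soundness}(i).

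As a cross-check in the normal form of Lemma~\ref{lem:align-normal}, a triple in \(L_{uv}\) has \(\dcoef=(a-e)(b-c)\) with \(b\neq c\) on \(\Adm_K\). The extra condition \(v_1=w_1\) means \(e=a\), which gives \(\dcoef=0\).

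For the disjointness statement, suppose a point lies in \((L_{uv}\setminus\Dec_K)\cap(L_{vw}\setminus\Dec_K)\). Then it lies in \(L_{uv}\cap L_{vw}\subseteq\Dec_K\), which is a contradiction; the other two pairs are handled identically.

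There is no real obstacle. The only care needed is to note that pairwise intersections force all three middle coefficients equal, not merely two.
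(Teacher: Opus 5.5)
Your proof is correct and is essentially the paper's argument: two alignment equalities force \(u_1=v_1=w_1\), so the middle column of the matrix in \eqref{eq:det} is a multiple of \((1,1,1)^{\mathsf T}\) and \(\dcoef=0\), and disjointness follows at once. The cross-check via the normal form \(\dcoef=(a-e)(b-c)\) of Lemma~\ref{lem:align-normal} is a valid extra confirmation that the paper does not include.
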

	
	\begin{proof}
		The two alignment equations imply \(u_1=v_1=w_1\), so the middle column of the
		matrix in \eqref{eq:det} is a scalar multiple of the column \((1,1,1)^{\mathsf T}\)
		and the determinant vanishes. The cyclic cases are identical.
	\end{proof}
	
	\begin{theorem}\label{thm:strat}
		Let \(K\) have characteristic different from 2. Then
		\[
		\Adm_K=\Gsix{}_{\!,K}\ \sqcup\ \Gfive{}_{\!,K}\ \sqcup\ \Dec_K,
		\]
		where
		\begin{gather*}
			\Gsix{}_{\!,K}=\Adm_K\setminus(\Dec_K\cup L_{uv}\cup L_{vw}\cup L_{wu}),\\
			\Gfive{}_{\!,K}=(L_{uv}\setminus\Dec_K)\sqcup(L_{vw}\setminus\Dec_K)\sqcup(L_{wu}\setminus\Dec_K).
		\end{gather*}
		For a triple in \(\Gsix{}_{\!,K}\) the Richelot output is a smooth genus-2 curve
		defined by a square-free sextic; for a triple in \(\Gfive{}_{\!,K}\) it is a
		smooth genus-2 curve defined by a square-free quintic; for a triple in
		\(\Dec_K\) the quotient is a product of elliptic curves. The strata \(\Dec_K\) and
		\(\Gfive{}_{\!,K}\) admit the normal forms of Lemmas~\ref{lem:pencil-normal}
		and~\ref{lem:align-normal}.
	\end{theorem}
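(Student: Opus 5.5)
The decomposition is set-theoretic bookkeeping followed by a transfer of the geometric conclusions already proved. I will first show that the three pieces partition \(\Adm_K\), then identify the Richelot output on each piece, and finally record the normal forms.

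\emph{Step 1: the partition.}
\begin{itemize}
\item By definition \(\Gsix{}_{\!,K}\) is the complement in \(\Adm_K\) of \(\Dec_K\cup L_{uv}\cup L_{vw}\cup L_{wu}\).
\item The union of the three alignment loci, with \(\Dec_K\) removed, is \((L_{uv}\cup L_{vw}\cup L_{wu})\setminus\Dec_K\).
\item This set equals the union of the three sets \(L_{ab}\setminus\Dec_K\).
\item By Lemma~\ref{lem:align-int} these three sets are pairwise disjoint, so their union is the disjoint union \(\Gfive{}_{\!,K}\) as written.
\item Hence \(\Adm_K=\Gsix{}_{\!,K}\sqcup\Gfive{}_{\!,K}\sqcup\Dec_K\), and the three pieces are disjoint by construction.
\end{itemize}

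\emph{Step 2: the output on each stratum.} The arguments of Proposition~\ref{prop:soundness} are purely algebraic and hold over any \(K\) with \(\operatorname{char}K\neq2\). This is because Lemmas~\ref{lem:aligned} and~\ref{lem:three-identities} are polynomial identities over \(\Z[u_0,\dots,w_1]\) with \(2\) inverted.

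\begin{itemize}
\item \emph{On \(\Gsix{}_{\!,K}\).} Here \(\dcoef\neq0\) and no pair is aligned. Lemma~\ref{lem:aligned} gives \(\deg U=\deg V=\deg W=2\). The coprimality argument of Proposition~\ref{prop:soundness}(i), via Lemma~\ref{lem:three-identities}(a),(b), shows \(UVW\) is square-free of degree \(6\). So \(y^2=\dcoef^{-1}UVW\) is a smooth genus-2 curve given by a square-free sextic.
\item \emph{On \(\Gfive{}_{\!,K}\).} Here \(\dcoef\neq0\) and exactly one pair is aligned; it cannot be two pairs, by Lemma~\ref{lem:align-int}. That bracket is linear of exact degree \(1\) by Lemma~\ref{lem:aligned}, and the other two have degree \(2\). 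The same square-freeness argument applies, so \(UVW\) is a square-free quintic. This is Proposition~\ref{prop:soundness}(ii).
\item \emph{On \(\Dec_K\).} Proposition~\ref{prop:soundness}(iii) and Theorem~\ref{thm:split} identify the quotient as a product of elliptic curves.
\end{itemize}

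\emph{Step 3: normal forms.}
\begin{itemize}
\item Lemma~\ref{lem:pencil-normal} applies verbatim on \(\Dec_K\).
\item On each \(L_{ab}\setminus\Dec_K\), Lemma~\ref{lem:align-normal} (up to the cyclic relabelling) gives the normal form with \(b\neq c\), \(e\neq a\).
\end{itemize}

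\emph{Main obstacle.} The only delicate point is the decomposable stratum over a general field \(K\). Theorem~\ref{thm:split} is stated later and may be phrased over \(\Fq\). I would cite it for finite \(K\). For general \(K\), I would argue directly that the extra involution induced by the pencil, as in Lemma~\ref{lem:deck} and Proposition~\ref{prop:extra-involution}, splits the Jacobian and its quotient; that construction is field-independent.

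A smaller check is that the square-freeness argument in Proposition~\ref{prop:soundness}(i) uses only \(p>2\) and the coprimality/separability hypotheses, never finiteness. This is clear on inspection.
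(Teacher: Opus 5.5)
Your proposal is correct and follows the paper's proof essentially line for line. Disjointness comes from Lemma~\ref{lem:align-int}; the genus-2 output and its degree (five exactly on an alignment locus, six otherwise) come from Proposition~\ref{prop:soundness} and Lemma~\ref{lem:aligned}; the decomposable case comes from Theorem~\ref{thm:split}; and the normal forms come from Lemmas~\ref{lem:pencil-normal} and~\ref{lem:align-normal}. You note that Theorem~\ref{thm:split} is proved under the \(\Fq\) standing assumptions of its section, while the statement here is over an arbitrary \(K\) with \(\operatorname{char}K\neq2\); the paper's proof passes over this silently. Your observation that the underlying pencil-involution construction is field-independent (Lemma~\ref{lem:deck} is already stated over such \(K\)) is the right way to handle it.
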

	
	\begin{proof}
		The displayed sets are disjoint by Lemma~\ref{lem:align-int} and exhaustive by
		definition. If \(\dcoef\neq0\), Proposition~\ref{prop:soundness} gives a smooth
		genus-2 output, and Lemma~\ref{lem:aligned} says that its degree is five exactly
		on one of the alignment loci and six otherwise. If \(\dcoef=0\), the quotient is
		a product by Theorem~\ref{thm:split}. The normal-form claims are the preceding
		two lemmas.
	\end{proof}
	
	\subsection{The incidence dictionary}
	
	Identify the monic quadratic \(x^{2}+ax+b\) with the point
	\((a,b)\in\A^{2}(\Fq)\), and let \(\Pi\colon b=a^{2}/4\) be the discriminant
	parabola (\(q\) odd). Write
	\[
	S=\A^{2}(\Fq)\setminus\Pi,\qquad n:=\#S=q(q-1),
	\]
	so that \(S\) is exactly the set of separable monic quadratics. For \(r\in\Fq\)
	let
	\[
	T_r=\{x^{2}+ax+b:\ (x-r)\mid x^{2}+ax+b\}=\{(a,b):b=-ra-r^{2}\},
	\]
	a line meeting \(\Pi\) only at \((-2r,r^{2})\), with contact order two; thus
	\(T_r\) is the tangent to \(\Pi\) at that point, and \(\#(T_r\cap S)=q-1\).
	
	\begin{lemma}\label{lem:dictionary}
		Let \(P\neq Q\) lie in \(S\).
		\begin{enumerate}[label=\textup{(\roman*)},leftmargin=*]
			\item \(P\) and \(Q\) have a common root if and only if \(P,Q\in T_r\) for some
			\(r\in\Fq\); equivalently, the line \(PQ\) is tangent to \(\Pi\).
			\item The pair \((P,Q)\) is aligned in the sense of Lemma~\ref{lem:aligned} if and
			only if \(PQ\) is vertical.
			\item Three distinct points of \(S\) satisfy \(\dcoef=0\) if and only if they are
			collinear.
		\end{enumerate}
	\end{lemma}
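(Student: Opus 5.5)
Each of the three parts is a direct coordinate computation in the identification \(x^{2}+ax+b\leftrightarrow(a,b)\), and I will treat them in order.

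For (i), the point is that the root condition is linear in the coefficients. The condition \((x-r)\mid x^{2}+ax+b\) reads \(r^{2}+ar+b=0\), which is exactly \(b=-ra-r^{2}\), so \(T_r\) is a line. If \(P\) and \(Q\) share a root, that root \(r\) need not a priori lie in \(\Fq\). I will rule this out. If \(r\notin\Fq\), then its conjugate \(\bar r\neq r\) is also a common root of \(P\) and \(Q\). Both are monic quadratics, so this forces \(P=Q=(x-r)(x-\bar r)\), which contradicts \(P\neq Q\). Hence \(r\in\Fq\) and \(P,Q\in T_r\). The converse is immediate. For the reformulation, \(P\neq Q\) means the line \(PQ\) is determined, so \(P,Q\in T_r\) says exactly that \(PQ=T_r\). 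It remains to check that every tangent line of \(\Pi\) over \(\Fq\) is some \(T_r\). The tangent at \((a_0,a_0^{2}/4)\) has slope \(a_0/2\), which is the tangent \(T_r\) with \(r=-a_0/2\); here \(q\) odd is used. Conversely, \(T_r\) meets \(\Pi\) doubly at \((-2r,r^{2})\), as already noted before the statement. Vertical lines are never tangent, since they meet \(\Pi\) transversally in one point, so there are no extra tangents to worry about.

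For (ii), Lemma~\ref{lem:aligned} says a pair is aligned iff the middle coefficients agree, \(a_1=b_1\), which is exactly the statement that the two points have the same first coordinate. Since \(P\neq Q\), this means \(PQ\) is vertical. Coprimality, needed in the definition of aligned, is not an issue here, because alignment as a predicate is just \(M_2=0\).

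For (iii), \(\dcoef\) in \eqref{eq:det} is the determinant with rows \((b_i,a_i,1)\). Up to a column permutation, which only changes the sign, this is the standard collinearity determinant for three points \((a_i,b_i)\) of the affine plane. So \(\dcoef=0\) iff the three points are collinear. Distinctness guarantees that the line in question is well-defined.

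The main obstacle is the field-of-definition subtlety in (i): a common root might a priori lie in \(\F_{q^{2}}\), and the conjugation argument above is what removes it. A second small check is that "tangent" means tangent at an \(\Fq\)-point of \(\Pi\). Every \(\Fq\)-line tangent to \(\Pi\) touches it at a rational point, because the contact point is the unique intersection point and is therefore Galois-fixed.
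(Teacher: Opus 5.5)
Your proposal is correct and follows essentially the same route as the paper in all three parts: the paper also excludes a common root outside $\Fq$ (each polynomial would then be the minimal polynomial of that root, the same observation as your conjugation argument), reads (ii) directly as equality of middle coefficients, and identifies $\dcoef$ with the collinearity determinant up to sign. You also make explicit a step the paper leaves implicit, namely that every $\Fq$-tangent of $\Pi$ is some $T_r$. In (ii), if ``aligned'' is read as including coprimality, you should add that two distinct points on a vertical line differ by a nonzero constant and are therefore automatically coprime.
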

	
	\begin{proof}
		(i) If \(P,Q\) share a root \(r\in\Fq\) then both lie on \(T_r\) and \(PQ=T_r\).
		If instead they share a root outside \(\Fq\), then each is the minimal polynomial
		of that root, so \(P=Q\). Conversely two distinct points of a tangent \(T_r\) are
		both divisible by \(x-r\). (ii) A vertical line is \(a=\text{const}\), that is,
		equality of middle coefficients. (iii) The determinant \eqref{eq:det} is the
		collinearity determinant of the three points \((u_1,u_0)\), \((v_1,v_0)\),
		\((w_1,w_0)\) up to sign.
	\end{proof}
	
	\section{Exact counts}\label{sec:counts}
	
	\begin{lemma}\label{lem:census}
		Every line of \(\A^{2}(\Fq)\) is of exactly one of four types, and the number
		\(\nu_L:=\#(L\cap S)\) of separable points it carries is as in
		Table~\ref{tab:ledger}.
	\end{lemma}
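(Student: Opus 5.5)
The argument will classify each line \(L\) of \(\A^{2}(\Fq)\) by how it meets \(\Pi\), then compute \(\nu_L=q-\#(L\cap\Pi)\), since \(L\) carries exactly \(q\) affine points and \(S\) is the complement of \(\Pi\).

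\emph{Vertical lines.} These are \(a=a_0\). Each meets \(\Pi\) in exactly the one point \((a_0,a_0^{2}/4)\), so \(\nu_L=q-1\).

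\emph{Non-vertical lines.} These are \(b=ma+c\). Substituting into \(b=a^{2}/4\) gives
\[
a^{2}-4ma-4c=0,
\]
whose discriminant is \(16(m^{2}+c)\). Since \(q\) is odd, the number of solutions \(a\in\Fq\) is \(1\), \(2\) or \(0\) according as \(m^{2}+c\) is zero, a nonzero square, or a nonsquare. Each solution \(a\) determines a unique point of \(L\cap\Pi\). This gives three types:
\begin{enumerate}[label=\textup{(\roman*)},leftmargin=*]
\item \emph{Tangent} (\(m^{2}+c=0\)): \(\nu_L=q-1\). These lines are exactly the \(T_r\) with \(m=-r\), \(c=-r^{2}\), matching the count \(\#(T_r\cap S)=q-1\) recorded before Lemma~\ref{lem:dictionary}.
\item \emph{Secant} (\(m^{2}+c\) a nonzero square): \(\nu_L=q-2\).
\item \emph{External} (\(m^{2}+c\) a nonsquare): \(\nu_L=q\).
\end{enumerate}

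\emph{Exclusivity.} The four types are mutually exclusive and exhaustive. Vertical and non-vertical are complementary, and among non-vertical lines the three cases are determined by the square class of \(m^{2}+c\). One point needs care: a vertical line also meets \(\Pi\) exactly once, yet it is not tangent in the sense of \(T_r\). Indeed, the line \(a=a_0\) meets \(\Pi\) transversally (the restricted equation \(b-a_0^{2}/4=0\) has a simple root in \(b\)), and it is not of the form \(b=-ra-r^{2}\). So the vertical type must be kept separate, and it is the only type with a single intersection point and contact order one.

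If the table also records how many lines of each type there are, the counts follow at once. There are \(q\) vertical lines and \(q\) tangents (one for each \(r\)). For fixed \(m\), the map \(c\mapsto m^{2}+c\) is a bijection of \(\Fq\), so there are \(q(q-1)/2\) secants and \(q(q-1)/2\) externals. As a sanity check,
\[
q(q-1)+q(q-1)+\tfrac{q(q-1)}{2}(q-2)+\tfrac{q(q-1)}{2}\,q=(q+1)\,n
\]
with \(n=q(q-1)\), which agrees with each point of \(S\) lying on \(q+1\) lines.

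The only real obstacle is the bookkeeping around the vertical case. The rest reduces to the elementary square-class criterion for the quadratic above, which depends on \(q\) being odd.
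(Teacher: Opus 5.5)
Your proof is correct and follows the paper's argument. Vertical lines meet \(\Pi\) in one point, and a non-vertical line \(b=ma+c\) is classified by the square class of the discriminant \(16(m^{2}+c)\); then \(\nu_L=q-\#(L\cap\Pi)\), and the line counts come from the bijection \(c\mapsto m^{2}+c\) for fixed \(m\). Your incidence check \(\sum_L\nu_L=(q+1)n\) is a slightly sharper consistency test than the paper's total \(2q+q(q-1)=q^{2}+q\), though neither is needed for the proof.
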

	
	\begin{proof}
		A vertical line \(a=c\) meets \(\Pi\) in the single point \((c,c^{2}/4)\). A
		non-vertical line \(b=ma+c\) meets \(\Pi\) where \(a^{2}-4ma-4c=0\), a quadratic
		in \(a\) of discriminant \(16(m^{2}+c)\); it is tangent when \(c=-m^{2}\), secant
		when \(m^{2}+c\) is a nonzero square, and external otherwise. For fixed \(m\) the
		\(q\) values of \(c\) split as 1 tangent, \((q-1)/2\) secant and \((q-1)/2\)
		external, which gives the stated line counts, and
		\(\nu_L=q-\#(L\cap\Pi)\) in each case. The total is
		\(2q+q(q-1)=q^{2}+q\), the number of lines of \(\A^{2}(\Fq)\).
	\end{proof}
	
	\begin{table}[H]
		\centering
		\caption{Counting ledger for the lines of \(\A^{2}(\Fq)\). The last column is
			proved in \S\ref{sec:split} (Proposition~\ref{prop:class-line}); it records the
			square class of the splitting class of a decomposable triple carried by the line.}
		\label{tab:ledger}
		\begin{tabular}{@{}lcccl@{}}
			\toprule
			Type & \(\#\)lines & \(\nu_L\) & contributes to & splitting class\\
			\midrule
			tangent  & \(q\)         & \(q-1\) & nothing (pairs not coprime) & ---\\
			vertical & \(q\)         & \(q-1\) & \(\Dec\), stabilised locus  & square\\
			secant   & \(q(q-1)/2\)  & \(q-2\) & \(\Dec\)                    & square\\
			external & \(q(q-1)/2\)  & \(q\)   & \(\Dec\)                    & nonsquare\\
			\bottomrule
		\end{tabular}
	\end{table}
	
	Tangent and vertical lines carry the same number \(q-1\) of separable points but have opposite roles: a tangent joins a non-coprime pair, a vertical line joins a coprime aligned pair.
	
	Write \(S=S_{\mathrm{sp}}\sqcup S_{\mathrm{ir}}\) for the split and irreducible
	separable points, so \(\#S_{\mathrm{sp}}=\#S_{\mathrm{ir}}=m:=q(q-1)/2\). For
	\(P\in S\) put \(B(P)=\{Q\in S: Q\neq P,\ \gcd(P,Q)\neq1\}\).
	
	\begin{lemma}\label{lem:neighbours}
		\(B(P)=\emptyset\) if \(P\in S_{\mathrm{ir}}\), and \(\#B(P)=2q-4\) if
		\(P\in S_{\mathrm{sp}}\).
	\end{lemma}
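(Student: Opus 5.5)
The proof reduces non-coprimality with \(P\) to sharing a root, and then counts, root by root, the separable quadratics divisible by \(x-r\), using the tangent lines \(T_r\) and Lemma~\ref{lem:dictionary}(i).

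First, take \(P\in S_{\mathrm{ir}}\). Its roots lie outside \(\Fq\) and are conjugate. If \(Q\in S\) with \(Q\neq P\) satisfied \(\gcd(P,Q)\neq1\), then, since both are monic quadratics, \(Q\) would share a non-rational root with \(P\). Then \(Q\) would be the minimal polynomial of that root, forcing \(Q=P\). This is exactly the argument in the proof of Lemma~\ref{lem:dictionary}(i), and it gives \(B(P)=\emptyset\).

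Second, take \(P=(x-r_1)(x-r_2)\in S_{\mathrm{sp}}\) with \(r_1\neq r_2\in\Fq\). Any \(Q\in B(P)\) must share a rational root with \(P\), because a common root of two monic quadratics, one of which splits over \(\Fq\), is one of \(r_1,r_2\). Hence
\[
B(P)=\bigl((T_{r_1}\cap S)\cup(T_{r_2}\cap S)\bigr)\setminus\{P\}.
\]
Each \(T_{r_i}\cap S\) has \(q-1\) points by the computation preceding Lemma~\ref{lem:dictionary}. Concretely these are the quadratics \((x-r_i)(x-t)\) with \(t\neq r_i\); the excluded value \(t=r_i\) is the tangency point on \(\Pi\).

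The two tangents \(T_{r_1}\) and \(T_{r_2}\) are distinct lines. A point of \(T_{r_1}\cap T_{r_2}\) is a monic quadratic divisible by both \(x-r_1\) and \(x-r_2\), so the only common point is \(P\) itself. Inclusion--exclusion then gives
\[
\#B(P)=(q-1)+(q-1)-1-1=2q-4.
\]
Here one \(-1\) removes the double count of \(P\) in the union, and the other removes \(P\) itself.

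The only point needing care is the claim that a common root of \(P\) and \(Q\) must be rational when \(P\) splits, together with the check that \(P\) lies on both tangents. Both are immediate from the factorization of \(P\) over \(\Fq\), so I expect no real obstacle.
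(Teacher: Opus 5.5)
Your proposal is correct and follows essentially the same route as the paper: the irreducible case comes from the minimal-polynomial argument of Lemma~\ref{lem:dictionary}(i), and in the split case you write \(B(P)\cup\{P\}=(T_{r_1}\cup T_{r_2})\cap S\) with \(T_{r_1}\cap T_{r_2}=\{P\}\). Inclusion--exclusion then gives \(2(q-1)-1-1=2q-4\).
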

	
	\begin{proof}
		The first claim is Lemma~\ref{lem:dictionary}(i). If \(P\) has roots
		\(r_1\neq r_2\) then \(B(P)\cup\{P\}=(T_{r_1}\cup T_{r_2})\cap S\). A monic
		quadratic divisible by \(x-r_1\) and \(x-r_2\) equals \(P\), so
		\(T_{r_1}\cap T_{r_2}=\{P\}\) and \(\#\bigl((T_{r_1}\cup T_{r_2})\cap S\bigr)=2(q-1)-1\).
	\end{proof}
	
	\begin{lemma}\label{lem:third}
		Let \((u,v)\) be an ordered coprime pair in \(S\) and put
		\(W(u,v)=\#\{w\in S: w\notin\{u,v\},\ \gcd(u,w)=\gcd(v,w)=1\}\). Then
		\[
		W(u,v)=
		\begin{cases}
			q^{2}-q-2, & u,v\in S_{\mathrm{ir}},\\
			(q-1)(q-2), & \text{exactly one of }u,v\text{ split},\\
			q^{2}-5q+10, & u,v\in S_{\mathrm{sp}}.
		\end{cases}
		\]
	\end{lemma}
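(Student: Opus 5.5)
Inclusion--exclusion on the set \(S\) of size \(n=q(q-1)\). The count is
\[
W(u,v)=n-2-\#\bigl((B(u)\cup B(v))\setminus\{u,v\}\bigr),
\]
since \(u,v\) themselves are removed separately. Coprimality of \(u,v\) gives \(v\notin B(u)\) and \(u\notin B(v)\), so the excluded set is just \(B(u)\cup B(v)\). Then \(\#(B(u)\cup B(v))=\#B(u)+\#B(v)-\#(B(u)\cap B(v))\), with \(\#B(\cdot)\) taken from Lemma~\ref{lem:neighbours}.

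The cases go as follows.

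\emph{Both irreducible.} Both neighbour sets are empty, so \(W=n-2=q^{2}-q-2\).

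\emph{Exactly one split.} Only one neighbour set is nonempty, of size \(2q-4\), and the intersection is empty. Hence
\[
W=q^{2}-q-2-(2q-4)=q^{2}-3q+2=(q-1)(q-2).
\]

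\emph{Both split.} Here the work is computing \(\#(B(u)\cap B(v))\). Write the roots of \(u\) as \(r_1,r_2\) and of \(v\) as \(s_1,s_2\). These four roots are distinct by coprimality. An element \(w\in B(u)\cap B(v)\) is a monic separable quadratic sharing a root with each, so \(w=(x-r_i)(x-s_j)\). This gives exactly four choices, each separable since \(r_i\neq s_j\), and each distinct from \(u\) and \(v\). The four products are pairwise distinct because their root sets differ. Geometrically, these are the intersection points \(T_{r_i}\cap T_{s_j}\), consistent with Lemma~\ref{lem:dictionary}(i). So the intersection has size \(4\), and
\[
W=q^{2}-q-2-2(2q-4)+4=q^{2}-5q+10.
\]

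The only step needing care is the both-split intersection. One must check two things there: that the four products \((x-r_i)(x-s_j)\) are genuinely in \(S\) and distinct, and that no element of \(B(u)\cup B(v)\) coincides with \(u\) or \(v\), which would otherwise be double-subtracted. Both follow from the distinctness of \(r_1,r_2,s_1,s_2\).
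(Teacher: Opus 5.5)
Your proof is correct and follows the paper's complement-counting argument. The only difference is bookkeeping in the split--split case. The paper applies inclusion--exclusion directly to the four tangents \(T_{r_1},T_{r_2},T_{s_1},T_{s_2}\), which have six pairwise intersection points and no triple point. You instead subtract \(\#(B(u)\cap B(v))=4\) from \(2(2q-4)\); your four cross products \((x-r_i)(x-s_j)\) are exactly the intersections \(T_{r_i}\cap T_{s_j}\), so both routes give the same excluded set of \(4q-10\) elements.
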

	
	\begin{proof}
		\(W(u,v)=n-\#\bigl(\{u,v\}\cup B(u)\cup B(v)\bigr)\). In the irreducible case both
		neighbour sets are empty. If \(u\) is split and \(v\) is not, every element of
		\(B(u)\) is split, so \(v\notin B(u)\) and the excluded set has
		\(2+(2q-4)=2q-2\) elements. If \(u,v\) are split and coprime, their roots
		\(r_1,r_2,s_1,s_2\) are pairwise distinct and
		\(\{u,v\}\cup B(u)\cup B(v)=(T_{r_1}\cup T_{r_2}\cup T_{s_1}\cup T_{s_2})\cap S\).
		Any two of these four tangents meet in exactly one point, which lies off \(\Pi\),
		and no three of them have a common point, since a monic quadratic has at most two
		roots. Inclusion--exclusion gives \(4(q-1)-\binom{4}{2}=4q-10\).
	\end{proof}
	
	\begin{theorem}[Exact stratification counts]\label{thm:counts}
		Let \(q\ge3\) be an odd prime power and put
		\(A_q=\#\Adm_{\Fq}(\Fq)\), \(D_q=\#\Dec_{\Fq}(\Fq)\),
		\(G_{5,q}=\#\Gfive{}_{\!,\Fq}(\Fq)\), \(G_{6,q}=\#\Gsix{}_{\!,\Fq}(\Fq)\). Then
		\begin{align}
			A_q     &= q(q-1)(q-2)^{2}(q^{2}-q+4),\label{eq:Aq}\\
			D_q     &= q(q-1)(q-2)(q^{2}-3q+3),\label{eq:Dq}\\
			G_{5,q} &= 3q(q-1)(q^{3}-6q^{2}+14q-13),\label{eq:G5q}\\
			G_{6,q} &= A_q-D_q-G_{5,q}=q(q-1)(q^{4}-9q^{3}+35q^{2}-71q+61).\label{eq:G6q}
		\end{align}
	\end{theorem}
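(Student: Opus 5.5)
The plan is to count ordered triples \((u,v,w)\) of distinct points of \(S\) with pairwise coprime entries, using Lemmas~\ref{lem:neighbours} and~\ref{lem:third} for \(A_q\) and the line census of Lemma~\ref{lem:census} for \(D_q\) and \(G_{5,q}\). The formula for \(G_{6,q}\) then follows by subtraction, using the disjoint decomposition of Theorem~\ref{thm:strat}.

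\emph{Step 1: \(A_q\).} First count ordered coprime pairs \((u,v)\) by type. There are \(m(m-1)\) irreducible–irreducible pairs. There are \(2m^{2}\) mixed pairs, all of them coprime. For split–split pairs, each split \(u\) has \(m-1-(2q-4)\) coprime split partners, giving \(m(m-2q+3)\) pairs. Weighting each class by the matching value of \(W(u,v)\) from Lemma~\ref{lem:third} gives
\[
A_q=m(m-1)(q^{2}-q-2)+2m^{2}(q-1)(q-2)+m(m-2q+3)(q^{2}-5q+10).
\]
With \(m=q(q-1)/2\), this must be expanded and factored into \eqref{eq:Aq}. This is routine polynomial algebra, and I would confirm it at \(q=3\) and \(q=5\).

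\emph{Step 2: \(D_q\).} By Lemma~\ref{lem:dictionary}(iii), a triple lies in \(\Dec\) exactly when its three points are collinear. Any three distinct points of \(S\) on one line are pairwise coprime, unless the line is tangent. Indeed, if two of them shared a root, the line through them would be that tangent by Lemma~\ref{lem:dictionary}(i), and a line is determined by two points. So each non-tangent line \(L\) contributes \(\nu_L(\nu_L-1)(\nu_L-2)\) ordered triples. Reading \(\nu_L\) and the number of lines of each type from Table~\ref{tab:ledger},
\[
D_q=q(q-1)(q-2)(q-3)+\tfrac{q(q-1)}{2}\bigl[(q-2)(q-3)(q-4)+q(q-1)(q-2)\bigr],
\]
which simplifies to \eqref{eq:Dq}.

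\emph{Step 3: \(G_{5,q}\).} The three pieces of \(\Gfive\) are disjoint by Lemma~\ref{lem:align-int}. Permuting the triple exchanges them, so \(G_{5,q}=3\,\#(L_{uv}\setminus\Dec)\). To count \(L_{uv}\setminus\Dec\), choose a vertical line and then an ordered pair \((u,v)\) of distinct separable points on it. Distinct points on a vertical line are automatically coprime, since a vertical line is not tangent. This gives \(q(q-1)(q-2)\) pairs, and for each I need the number of \(w\in S\) off that vertical line that are coprime to both \(u\) and \(v\).

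This is the main obstacle, because whether \(w\) is coprime to \(u\) and \(v\) depends on the split/irreducible types of \(u\) and \(v\). My approach is to start from \(W(u,v)\) in Lemma~\ref{lem:third}. From it I subtract the points on the vertical line other than \(u,v\), which number \(q-3\) and are all coprime to \(u\) and \(v\). The result is \(W(u,v)-(q-3)\).

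Summing this requires knowing how split and irreducible points are distributed on a vertical line. On the line \(a=c\), the separable point \(x^{2}+cx+b\) is split exactly when \(c^{2}-4b\) is a nonzero square. So each vertical line carries \((q-1)/2\) split and \((q-1)/2\) irreducible points. Hence the ordered pairs on one line split by type as follows:
\begin{itemize}[label={},leftmargin=*]
\item \(\tfrac{q-1}{2}\cdot\tfrac{q-3}{2}\) irreducible–irreducible pairs;
\item \(2\cdot\bigl(\tfrac{q-1}{2}\bigr)^{2}\) mixed pairs;
\item \(\tfrac{q-1}{2}\cdot\tfrac{q-3}{2}\) split–split pairs.
\end{itemize}
Summing \(W(u,v)-(q-3)\) against these counts and multiplying by the \(q\) vertical lines and by \(3\) should give \eqref{eq:G5q}; I would check this at \(q=3\).

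Finally, \(G_{6,q}=A_q-D_q-G_{5,q}\) by Theorem~\ref{thm:strat}, and expanding gives the stated factorization.
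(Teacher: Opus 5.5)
Your proposal is correct and follows the paper's proof essentially step for step: the same split of ordered coprime pairs by type for \(A_q\) (your \(m(m-2q+3)\) equals the paper's \(N_{\mathrm{ss}}=m(m-1)-q(q-1)(q-2)\)), the same sum \(\sum_L\nu_L(\nu_L-1)(\nu_L-2)\) over non-tangent lines for \(D_q\), and the same vertical-line decomposition for \(G_{5,q}\). The one small difference is in Step~3, where you count the admissible \(w\) as \(W(u,v)-(q-3)\) using Lemma~\ref{lem:third}, whereas the paper redoes inclusion--exclusion on \(V_c\) together with the tangents; your shortcut gives exactly the paper's values \((q-1)^{2}\), \(q^{2}-4q+5\) and \(q^{2}-6q+13\), and is a slightly cleaner way to obtain them.
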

	
	\begin{proof}
		\emph{The count \(A_q\).} The number of ordered pairs of distinct split points
		sharing a root is \(q(q-1)(q-2)\): choose the common root \(r\), then the two
		remaining roots, ordered and distinct from each other and from \(r\). Hence the
		numbers of ordered coprime pairs by type are
		\[
		N_{\mathrm{ss}}=m(m-1)-q(q-1)(q-2),\qquad
		N_{\mathrm{si}}=N_{\mathrm{is}}=m^{2},\qquad
		N_{\mathrm{ii}}=m(m-1).
		\]
		Summing Lemma~\ref{lem:third} over these,
		\[
		A_q=N_{\mathrm{ss}}(q^{2}-5q+10)+2m^{2}(q-1)(q-2)+N_{\mathrm{ii}}(q^{2}-q-2),
		\]
		which expands to \eqref{eq:Aq}.
		
		\emph{The count \(D_q\).} By Lemma~\ref{lem:dictionary}, an admissible triple has
		\(\dcoef=0\) exactly when its three points are distinct, lie in \(S\), and are
		collinear on a line that is not tangent to \(\Pi\); conversely any three distinct
		points of \(L\cap S\) with \(L\) non-tangent are pairwise coprime, because
		\(PQ=L\) for any two of them. Therefore
		\[
		D_q=\sum_{L\ \text{non-tangent}}\nu_L(\nu_L-1)(\nu_L-2),
		\]
		and Table~\ref{tab:ledger} evaluates the sum over the three non-tangent types:
		\begin{equation}\label{eq:Dq-split}
			\begin{split}
				D_q&=\underbrace{\tfrac{q(q-1)}{2}(q-2)(q-3)(q-4)}_{\text{secant}}
				+\underbrace{\tfrac{q(q-1)}{2}\,q(q-1)(q-2)}_{\text{external}}\\
				&\quad+\underbrace{q\,(q-1)(q-2)(q-3)}_{\text{vertical}},
			\end{split}
		\end{equation}
		which simplifies to \eqref{eq:Dq}. The vertical term must not be omitted: three
		distinct separable quadratics with a common middle coefficient are pairwise
		coprime, since two of them differ by a nonzero constant, and they are collinear,
		hence lie in \(\Dec\).
		
		\emph{The count \(G_{5,q}\).} By Lemma~\ref{lem:align-int} the three alignment
		positions are disjoint off \(\Dec\), so \(G_{5,q}=3\,\#(L_{uv}\setminus\Dec)(\Fq)\).
		Fix a vertical line \(V_c=\{a=c\}\). It carries \(q-1\) separable points, of which
		\(\alpha:=(q-1)/2\) are split and \(\alpha\) are irreducible, because
		\(b\mapsto c^{2}-4b\) is a bijection of \(\Fq\). Any two distinct points of
		\(V_c\) are coprime. Let \((u,v)\) be an ordered pair of distinct points of
		\(V_c\cap S\) and count the admissible \(w\), that is, \(w\in S\) coprime to
		\(u\) and \(v\) and not on \(V_c\). If \(u\) is split with roots \(r_1,r_2\) then
		\(r_1+r_2=-c\), so \(T_{r_i}\cap V_c=\{u\}\).
		\begin{itemize}[leftmargin=*]
			\item \(u,v\) irreducible: the excluded set is \(V_c\cap S\), of size \(q-1\), so
			the count is \(q^{2}-q-(q-1)=(q-1)^{2}\).
			\item \(u\) split, \(v\) irreducible: the excluded set is
			\((V_c\cup T_{r_1}\cup T_{r_2})\cap S\); the three pairwise intersections and the
			triple intersection all equal \(\{u\}\), so inclusion--exclusion gives
			\(3(q-1)-3+1=3q-5\) and the count is \(q^{2}-4q+5\).
			\item \(u,v\) split, with roots \(r_1,r_2\) and \(s_1,s_2\): the four tangents
			meet \(S\) in \(4(q-1)-6=4q-10\) points, and this set meets \(V_c\) exactly in
			\(\{u,v\}\); the excluded set therefore has \((q-1)+(4q-10)-2=5q-13\) elements
			and the count is \(q^{2}-6q+13\).
		\end{itemize}
		The numbers of ordered pairs on \(V_c\) are \(\alpha(\alpha-1)\) split--split,
		\(2\alpha^{2}\) mixed and \(\alpha(\alpha-1)\) irreducible--irreducible. Summing
		and then over the \(q\) vertical lines,
		\[
		\begin{aligned}
			\#(L_{uv}\setminus\Dec)(\Fq)
			&=q\Bigl[\alpha(\alpha-1)\bigl((q^{2}-6q+13)+(q-1)^{2}\bigr)\\
			&\qquad\quad+2\alpha^{2}(q^{2}-4q+5)\Bigr]\\
			&=q(q-1)(q^{3}-6q^{2}+14q-13),
		\end{aligned}
		\]
		whence \eqref{eq:G5q}. Finally, Theorem~\ref{thm:strat} is a disjoint
		decomposition, which gives \eqref{eq:G6q}.
	\end{proof}
	
	\begin{corollary}\label{cor:densities}
		The corresponding counts of unordered triples are \(A_q/6\), \(D_q/6\),
		\(G_{5,q}/6\) and \(G_{6,q}/6\). Moreover
		\begin{gather*}
			\frac{D_q}{A_q}=\frac{q^{2}-3q+3}{(q-2)(q^{2}-q+4)}=\tfrac1q-\tfrac3{q^{3}}+O(q^{-4}),\\
			\frac{G_{5,q}}{A_q}=\frac{3(q^{3}-6q^{2}+14q-13)}{(q-2)^{2}(q^{2}-q+4)}
			=\tfrac3q-\tfrac3{q^{2}}+O(q^{-3}),
		\end{gather*}
		so the non-generic locus has density \(4/q-3/q^{2}+O(q^{-3})\).
	\end{corollary}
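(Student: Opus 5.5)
The corollary has three parts, and I will prove each directly from Theorem~\ref{thm:counts}.

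\emph{Unordered counts.} Every stratum is stable under permuting \((u,v,w)\): admissibility, \(\dcoef=0\) and the alignment conditions are permutation-invariant by Proposition~\ref{prop:equivariance}, and \(\dcoef\) changes only by a sign. On \(\Adm\) the three factors are pairwise coprime, hence pairwise distinct, so \(S_3\) acts freely on ordered triples. Each unordered triple therefore corresponds to exactly six ordered ones, and the four unordered counts are \(A_q/6\), \(D_q/6\), \(G_{5,q}/6\) and \(G_{6,q}/6\).

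\emph{The two exact ratios.} Dividing \eqref{eq:Dq} by \eqref{eq:Aq}, the common factor \(q(q-1)(q-2)\) cancels and leaves
\[
\frac{D_q}{A_q}=\frac{q^{2}-3q+3}{(q-2)(q^{2}-q+4)}.
\]
Dividing \eqref{eq:G5q} by \eqref{eq:Aq}, the common factor \(q(q-1)\) cancels and leaves
\[
\frac{G_{5,q}}{A_q}=\frac{3(q^{3}-6q^{2}+14q-13)}{(q-2)^{2}(q^{2}-q+4)}.
\]

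\emph{Asymptotics.} I expand both ratios in \(t=1/q\).

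The first denominator is \((q-2)(q^{2}-q+4)=q^{3}-3q^{2}+6q-8\). Hence
\[
\frac{D_q}{A_q}=t\cdot\frac{1-3t+3t^{2}}{1-3t+6t^{2}-8t^{3}}.
\]
The fraction is \(1+\bigl((3t^{2})-(6t^{2})\bigr)+O(t^{3})=1-3t^{2}+O(t^{3})\), since the \(t\)-terms agree. This gives \(D_q/A_q=1/q-3/q^{3}+O(q^{-4})\).

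The second denominator is \((q-2)^{2}(q^{2}-q+4)=q^{4}-5q^{3}+\dots\). Hence
\[
\frac{G_{5,q}}{A_q}=3t\cdot\frac{1-6t+\dots}{1-5t+\dots}=3t(1-t+O(t^{2})).
\]
This gives \(G_{5,q}/A_q=3/q-3/q^{2}+O(q^{-3})\).

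Adding the two expansions, the non-generic density is \((D_q+G_{5,q})/A_q=4/q-3/q^{2}+O(q^{-3})\).

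The only step that needs care is the series expansion. In the first ratio the linear terms cancel, so I must carry the expansion to second order in \(t\); there I will cross-check by multiplying back to confirm the coefficient \(-3\).
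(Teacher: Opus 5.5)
Your proof is correct and follows the paper's argument: the free \(S_3\)-action on pairwise distinct factors gives the unordered counts, and the ratios and their expansions are read off Theorem~\ref{thm:counts}; your second-order expansions \(1-3t^{2}+O(t^{3})\) and \(1-t+O(t^{2})\) check out. You also note that each stratum is \(S_3\)-stable, which the paper leaves implicit, but for \(\Gfive\) this stability comes from the three alignment loci being permuted among themselves, not from Proposition~\ref{prop:equivariance}, which covers only discriminants, resultants and \(\dcoef\).
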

	
	\begin{proof}
		Coprimality makes the \(S_3\)-action on ordered triples free, since the three
		factors are pairwise distinct. The expansions follow from
		Theorem~\ref{thm:counts}.
	\end{proof}
	
	\begin{remark}\label{rem:role-enum}
		The four closed forms are proved above; the exhaustive enumeration described in
		\S\ref{sec:verify} is not used in the proofs. It serves as an independent
		regression check of the formulas, of the case split, and of the implementation,
		and it covers every odd prime power \(q\le49\), including the non-prime values
		\(q\in\{9,25,27,49\}\).
	\end{remark}
	
	\section{The affine groupoid mass and the stabilisers}\label{sec:mass}
	
	Theorem~\ref{thm:counts} counts coefficient vectors and therefore depends on the coordinate in which the sextic is displayed. This section records what survives after quotienting by the coordinate freedom.
	
	\begin{definition}\label{def:displayed}
		A \emph{displayed split Richelot datum} over \(\Fq\) is an ordered triple
		\((u,v,w)\) of monic, separable, pairwise coprime quadratic polynomials in a
		chosen affine coordinate \(x\); that is, a point of \(\Adm_{\Fq}(\Fq)\). The group
		\[
		\Aff(\Fq)=\{\alpha_{a,b}\colon x\mapsto ax+b\ :\ a\in\Fq^{\times},\ b\in\Fq\},
		\qquad \#\Aff(\Fq)=q(q-1),
		\]
		acts on monic quadratics by \(s\mapsto s^{\alpha}:=a^{-2}s(\alpha(x))\) and on
		triples componentwise. By Proposition~\ref{prop:equivariance} this action
		preserves \(\Adm_{\Fq}\), \(\Dec_{\Fq}\), \(\Gfive{}_{\!,\Fq}\) and
		\(\Gsix{}_{\!,\Fq}\). We call the orbits \emph{affine-equivalence classes of
			displayed split Richelot data}.
	\end{definition}
	
	\begin{lemma}\label{lem:fix}
		Let \(\alpha=\alpha_{a,b}\) be a nontrivial element of \(\Aff(\Fq)\) and let
		\(\Fix(\alpha)\) denote the set of separable monic quadratics \(s\) with
		\(s^{\alpha}=s\).
		\begin{enumerate}[label=\textup{(\roman*)},leftmargin=*]
			\item If \(a=1\) then \(\Fix(\alpha)=\emptyset\).
			\item If \(a\neq\pm1\) then \(\#\Fix(\alpha)\le1\).
			\item If \(a=-1\), say \(\alpha=\iota_b\colon x\mapsto-x+b\), then
			\(\Fix(\iota_b)\) is the set of separable monic quadratics with middle
			coefficient \(-b\), that is the vertical line \(\{s_1=-b\}\) minus its point on
			\(\Pi\). In particular \(\#\Fix(\iota_b)=q-1\).
		\end{enumerate}
		Consequently the only nontrivial affine maps fixing three or more separable monic
		quadratics are the \(q\) involutions \(\iota_b\), \(b\in\Fq\).
	\end{lemma}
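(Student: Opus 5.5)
The plan is to compute the action of \(\alpha=\alpha_{a,b}\) on coefficients explicitly and read off the fixed locus as the solution set of a small linear system. For \(s=x^{2}+s_1x+s_0\),
\[
s^{\alpha}(x)=a^{-2}\bigl((ax+b)^{2}+s_1(ax+b)+s_0\bigr)
=x^{2}+\frac{2b+s_1}{a}\,x+\frac{b^{2}+s_1b+s_0}{a^{2}} .
\]
So \(s^{\alpha}=s\) is the pair of equations
\begin{equation*}
(a-1)s_1=2b,\qquad (a^{2}-1)s_0=b^{2}+bs_1 .
\end{equation*}
This is affine-linear in \((s_1,s_0)\). The three cases then correspond to the rank of the system.

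(i) If \(a=1\), then \(\alpha\neq\mathrm{id}\) forces \(b\neq0\). The first equation reads \(0=2b\), which is impossible in odd characteristic, so \(\Fix(\alpha)=\emptyset\).

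(ii) If \(a\neq\pm1\), both coefficients \(a-1\) and \(a^{2}-1\) are nonzero. The system then has exactly one solution: \(s_1=2b/(a-1)\), and \(s_0\) is determined by the second equation. Hence \(\#\Fix(\alpha)\le1\), with equality exactly when that solution lies off \(\Pi\).

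(iii) If \(a=-1\), the first equation gives \(s_1=-b\). The second becomes \(0\cdot s_0=b^{2}+b(-b)=0\), which is automatic. So the fixed monic quadratics are exactly the vertical line \(\{s_1=-b\}\). Intersecting with \(S\) removes its single point on \(\Pi\) (Lemma~\ref{lem:census}), leaving \(q-1\) points.

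For the final assertion, (i) and (ii) show that a nontrivial \(\alpha\) fixing three or more separable points must have \(a=-1\), i.e.\ \(\alpha=\iota_b\). Conversely, each \(\iota_b\) fixes \(q-1\ge 3\) points when \(q\ge5\).

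The main obstacle is the boundary case \(q=3\), where \(q-1=2\). There the final sentence holds only vacuously, unless it is read as ``at least as many as three of them \emph{if any} map fixes three''. I would note this explicitly and otherwise treat the argument as routine. Everything else is the direct coefficient computation above, and the only place odd characteristic enters is case (i).
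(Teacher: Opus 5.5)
Your proposal is correct and is essentially the paper's proof: the same expansion of \(a^{-2}s(ax+b)\), the same system \((a-1)s_1=2b\), \((a^{2}-1)s_0=b^{2}+bs_1\), and the same case split on \(a\), with Lemma~\ref{lem:census} supplying the single point of the vertical line on \(\Pi\). Your \(q=3\) caveat is sound: the final sentence should be read only as ``a nontrivial map fixing three quadratics is some \(\iota_b\)'', which is how Proposition~\ref{prop:stab} uses it. One small correction: odd characteristic is not used only in (i), since (iii) also needs it to solve \(-2s_1=2b\) for \(s_1\).
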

	
	\begin{proof}
		Write \(s=x^{2}+s_1x+s_0\). Expanding \(a^{-2}s(ax+b)\) gives
		\[
		s^{\alpha}=x^{2}+\frac{s_1+2b}{a}\,x+\frac{b^{2}+s_1b+s_0}{a^{2}},
		\]
		so \(s^{\alpha}=s\) if and only if
		\begin{equation}\label{eq:fixeq}
			(a-1)s_1=2b\qquad\text{and}\qquad (a^{2}-1)s_0=b^{2}+s_1b .
		\end{equation}
		If \(a=1\) the first equation forces \(b=0\), so \(\alpha\) is the identity. If
		\(a\neq1\) the first equation determines \(s_1=2b/(a-1)\) uniquely; if moreover
		\(a^{2}\neq1\), the second determines \(s_0\) uniquely. If \(a=-1\) the first
		equation reads \(s_1=-b\) and the second reads \(0=b^{2}+s_1b=0\), which holds
		identically, so \(\Fix(\iota_b)\) is cut out by \(s_1=-b\) alone. The vertical
		line \(\{s_1=-b\}\) meets \(\Pi\) in one point by Lemma~\ref{lem:census}.
	\end{proof}
	
	\begin{proposition}\label{prop:stab}
		Let \((u,v,w)\in\Adm_{\Fq}(\Fq)\) and let \(\Stab(u,v,w)\) be its stabiliser in
		\(\Aff(\Fq)\).
		\begin{enumerate}[label=\textup{(\roman*)},leftmargin=*]
			\item Either \(\Stab(u,v,w)=1\), or \(\Stab(u,v,w)=\langle\iota_b\rangle\cong C_2\)
			for a unique \(b\in\Fq\); the latter occurs exactly when \(u_1=v_1=w_1=-b\).
			\item Every triple with nontrivial stabiliser lies in \(\Dec_{\Fq}\).
			\item The converse fails. The stabilised locus is exactly the part of
			\(\Dec_{\Fq}(\Fq)\) carried by vertical lines, of cardinality
			\(q(q-1)(q-2)(q-3)\), whereas \(D_q=q(q-1)(q-2)(q^{2}-3q+3)\); the stabilised
			locus is therefore a proper sublocus of \(\Dec_{\Fq}\), of relative density
			\((q-3)/(q^{2}-3q+3)\).
		\end{enumerate}
	\end{proposition}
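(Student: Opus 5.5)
The plan is to reduce everything to Lemma~\ref{lem:fix}. Because an admissible triple consists of pairwise coprime, hence distinct, quadratics, an element \(\alpha\in\Stab(u,v,w)\) fixes three distinct separable monic quadratics. The conclusion of Lemma~\ref{lem:fix} then forces \(\alpha\) to be the identity or one of the involutions \(\iota_b\).

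For part (i), if \(\alpha=\iota_b\), then Lemma~\ref{lem:fix}(iii) says that all three factors have middle coefficient \(-b\), so \(u_1=v_1=w_1=-b\). The value \(b\) is therefore determined by the triple. Two distinct involutions \(\iota_b,\iota_{b'}\) cannot both stabilise it, since that would require \(-b=u_1=-b'\). Hence the stabiliser is trivial or equal to \(\langle\iota_b\rangle\cong C_2\). Conversely, if \(u_1=v_1=w_1=-b\), then each factor lies in \(\Fix(\iota_b)\), and \(\iota_b\) stabilises the triple.

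For part (ii), the equal middle coefficients put all three points on one vertical line. Collinearity then gives \(\dcoef=0\) by Lemma~\ref{lem:dictionary}(iii). Alternatively, Lemma~\ref{lem:align-int} applies directly, or one can note that the middle column of \eqref{eq:det} is proportional to the column of ones.

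For part (iii), the stabilised locus consists exactly of admissible triples with \(u_1=v_1=w_1\). By part (i) these are the ordered triples of distinct points of \(V_c\cap S\) for some \(c\), and any three such points are automatically admissible, since two of them differ by a nonzero constant and are therefore coprime. So they are exactly the elements of \(\Dec\) carried by vertical lines. Their number is the vertical term of \eqref{eq:Dq-split}, namely \(q(q-1)(q-2)(q-3)\).

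Comparing with \eqref{eq:Dq} gives the ratio \((q-3)/(q^{2}-3q+3)\). This is strictly less than \(1\) because \(q^{2}-3q+3-(q-3)=q^{2}-4q+6>0\). Properness therefore holds for every odd \(q\ge3\); at \(q=3\) the stabilised locus is even empty while \(D_3>0\).

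The only delicate point is checking that a nontrivial stabiliser element really must fix the three factors individually rather than permute them. This holds because the \(\Aff(\Fq)\)-action on triples is componentwise and ordered, so stabilising the ordered triple means \(u^\alpha=u\), \(v^\alpha=v\) and \(w^\alpha=w\). Apart from this, the argument is bookkeeping from earlier results.
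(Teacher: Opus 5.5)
Your proposal is correct and follows the paper's proof essentially step for step: Lemma~\ref{lem:fix} for (i), collinearity on a vertical line via Lemma~\ref{lem:dictionary}(iii) for (ii), and the direct count \(q(q-1)(q-2)(q-3)\) of ordered triples of distinct separable points on vertical lines for (iii). Your ratio argument via \(q^{2}-4q+6>0\), together with your separate observation for \(q=3\), gets properness for every odd \(q\ge3\) slightly more cleanly than the paper's remark that the secant and external contributions are nonzero for \(q\ge5\).
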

	
	\begin{proof}
		(i) A nontrivial \(\alpha\in\Stab(u,v,w)\) fixes the three distinct quadratics
		\(u,v,w\), so \(\alpha=\iota_b\) for some \(b\) by Lemma~\ref{lem:fix}, and then
		\(u_1=v_1=w_1=-b\); this determines \(b\). Conversely, if the three middle
		coefficients agree then \(\iota_{-u_1}\) fixes all three.
		
		(ii) Equal middle coefficients means that the three points are collinear on a
		vertical line, so \(\dcoef=0\) by Lemma~\ref{lem:dictionary}(iii).
		
		(iii) By (i) the stabilised locus consists of the ordered triples of distinct
		separable points on a vertical line, of which there are
		\(q\cdot(q-1)(q-2)(q-3)\), all admissible because any two points of a vertical
		line differ by a nonzero constant and hence are coprime. The comparison with
		\eqref{eq:Dq-split} shows that the secant and external contributions to \(D_q\)
		lie outside the stabilised locus, and both are nonzero for \(q\ge5\).
	\end{proof}
	
	\begin{remark}\label{rem:stab-normal}
		A triple with nontrivial stabiliser has \(u_1=v_1=w_1\); after the affine shift
		\(x\mapsto x-u_1/2\), which changes no predicate by
		Proposition~\ref{prop:shift}, it becomes
		\begin{equation}\label{eq:stabform}
			u=x^{2}+\beta_1,\qquad v=x^{2}+\beta_2,\qquad w=x^{2}+\beta_3
		\end{equation}
		with \(\beta_1,\beta_2,\beta_3\) distinct and nonzero, and the stabilising
		involution is \(x\mapsto-x\). Conversely every triple of the form
		\eqref{eq:stabform} with distinct nonzero \(\beta_i\) is admissible and is
		stabilised by \(x\mapsto-x\). Equation \eqref{eq:stabform} is a normal form for the stabilised locus only, not for \(\Dec_{\Fq}\); by
		Proposition~\ref{prop:stab}(iii) most decomposable triples are not of this shape.
		The curve \(y^{2}=(x^{2}+\beta_1)(x^{2}+\beta_2)(x^{2}+\beta_3)\) is the standard
		normal form for a genus-2 curve whose reduced automorphism group contains an
		involution, and \eqref{eq:stabform} is its distinguished decomposable kernel; see
		\cite[\S5, Case~(1)]{KatsuraTakashima2020}.
	\end{remark}
	
	\begin{theorem}[Affine groupoid mass, orbit count, stabilised orbit count]\label{thm:mass}
		Let \(q\ge3\) be an odd prime power. For an \(\Aff(\Fq)\)-stable subset
		\(X\subseteq\Adm_{\Fq}(\Fq)\) define the three quantities
		\begin{gather*}
			\mass_{\Aff}(X)=\sum_{\mathcal O\subseteq X/\Aff}\frac1{\#\Stab(\mathcal O)},\\
			N_{\mathrm{orb}}(X)=\#\bigl(X/\Aff\bigr),\qquad
			N_{\mathrm{stab}}(X)=\#\{\mathcal O\subseteq X/\Aff:\ \Stab(\mathcal O)\neq1\}.
		\end{gather*}
		Then \(\mass_{\Aff}(X)=\#X/\#\Aff(\Fq)\), so
		\begin{gather*}
			\mass_{\Aff}(\Adm)=(q-2)^{2}(q^{2}-q+4),\qquad
			\mass_{\Aff}(\Dec)=(q-2)(q^{2}-3q+3),\\
			\mass_{\Aff}(\Gfive)=3(q^{3}-6q^{2}+14q-13),\\
			\mass_{\Aff}(\Gsix)=q^{4}-9q^{3}+35q^{2}-71q+61,
		\end{gather*}
		and \(\mass_{\Aff}(\Adm)=\mass_{\Aff}(\Dec)+\mass_{\Aff}(\Gfive)+\mass_{\Aff}(\Gsix)\).
		Moreover
		\[
		N_{\mathrm{orb}}(\Adm)=(q-2)^{2}(q^{2}-q+4)+(q-2)(q-3),
		\qquad
		N_{\mathrm{stab}}(\Adm)=2(q-2)(q-3),
		\]
		and every stabilised orbit lies in \(\Dec_{\Fq}\).
	\end{theorem}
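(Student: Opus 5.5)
The plan is to obtain the whole theorem from the orbit--stabiliser theorem, the counts of Theorem~\ref{thm:counts}, and the stabiliser classification of Proposition~\ref{prop:stab}.

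\emph{Mass identity.} For an $\Aff(\Fq)$-stable set $X$, each orbit $\mathcal O$ has $\#\mathcal O=\#\Aff(\Fq)/\#\Stab(\mathcal O)$. Hence
\[
\sum_{\mathcal O}\frac{1}{\#\Stab(\mathcal O)}=\sum_{\mathcal O}\frac{\#\mathcal O}{\#\Aff(\Fq)}=\frac{\#X}{q(q-1)} .
\]
Stabilisers are conjugate along an orbit, so $\#\Stab(\mathcal O)$ is well defined. The four strata are $\Aff$-stable by Definition~\ref{def:displayed}. Dividing \eqref{eq:Aq}--\eqref{eq:G6q} by $q(q-1)$ gives the four displayed masses. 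Additivity holds because Theorem~\ref{thm:strat} is a disjoint decomposition.

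\emph{Stabilised orbits.} By Proposition~\ref{prop:stab}(i), each stabiliser is trivial or of order $2$. The triples with nontrivial stabiliser form the vertical part of $\Dec$, which has $q(q-1)(q-2)(q-3)$ elements by Proposition~\ref{prop:stab}(iii). This locus is $\Aff$-stable, since conjugation preserves having a nontrivial stabiliser. Each of its orbits has size $q(q-1)/2$, so the number of stabilised orbits is
\[
N_{\mathrm{stab}}=\frac{q(q-1)(q-2)(q-3)}{q(q-1)/2}=2(q-2)(q-3).
\]
All of these orbits lie in $\Dec$ by Proposition~\ref{prop:stab}(ii).

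\emph{Total orbit count.} Write $N_{\mathrm{orb}}=N_{\mathrm{free}}+N_{\mathrm{stab}}$, splitting the orbits by stabiliser size. The mass is then
\[
\mass_{\Aff}(\Adm)=N_{\mathrm{free}}+\tfrac12N_{\mathrm{stab}},
\]
so
\[
N_{\mathrm{orb}}=\mass_{\Aff}(\Adm)+\tfrac12N_{\mathrm{stab}}=(q-2)^{2}(q^{2}-q+4)+(q-2)(q-3),
\]
as claimed.

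\emph{Main obstacle.} The only nonroutine point is to confirm that the vertical locus is a union of whole orbits and that every stabiliser there is exactly $C_2$, rather than larger. Both follow from Proposition~\ref{prop:stab}(i): the stabiliser is either trivial or $\langle\iota_b\rangle$, with $b$ unique. The rest is arithmetic of the kind already checked in Theorem~\ref{thm:counts}.
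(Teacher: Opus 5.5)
Your proof is correct. The mass identity and the count $N_{\mathrm{stab}}(\Adm)=2(q-2)(q-3)$ are obtained exactly as in the paper; the only difference is how you get $N_{\mathrm{orb}}(\Adm)$.

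The paper uses Burnside's lemma. By Lemma~\ref{lem:fix}, only the $q$ involutions $\iota_b$ fix admissible triples, and each fixes $(q-1)(q-2)(q-3)$ of them. This gives $N_{\mathrm{orb}}(\Adm)=\bigl(A_q+q(q-1)(q-2)(q-3)\bigr)/\bigl(q(q-1)\bigr)$.

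You instead split the orbits by stabiliser order and read off $N_{\mathrm{orb}}=\mass_{\Aff}(\Adm)+\tfrac12N_{\mathrm{stab}}$ from the mass. The two computations are the same identity in different form. Indeed, $\sum_{\alpha\neq1}\#\{x\in\Adm_{\Fq}(\Fq):\alpha x=x\}=\sum_{x}(\#\Stab(x)-1)$, and when every stabiliser has order at most $2$ this is exactly the size $q(q-1)(q-2)(q-3)$ of the stabilised locus.

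What your version buys:
\begin{itemize}
\item It needs only Proposition~\ref{prop:stab}(i) and (iii), with no per-element fixed-point counts.
\item It makes transparent that the orbit count exceeds the mass by exactly half the number of stabilised orbits.
\end{itemize}

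What the paper's version buys: it derives $N_{\mathrm{orb}}$ from fixed-point data of individual group elements, independently of $N_{\mathrm{stab}}$. That is also the form in which the computational supplement cross-checks the orbit counts via Burnside's lemma.
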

	
	\begin{proof}
		The identity \(\mass_{\Aff}(X)=\#X/\#\Aff(\Fq)\) is the orbit--stabiliser theorem
		summed over orbits, so the four displayed masses are the counts of
		Theorem~\ref{thm:counts} divided by \(q(q-1)\), and their additivity is that of
		Theorem~\ref{thm:strat}.
		
		For the orbit count we apply Burnside's lemma. By Lemma~\ref{lem:fix} the only
		nontrivial elements with fixed admissible triples are the \(q\) involutions
		\(\iota_b\), and the admissible triples fixed by \(\iota_b\) are the ordered
		triples of distinct separable points on the vertical line \(\{s_1=-b\}\). Any two
		distinct such points differ by a nonzero constant, so their resultant is that
		constant squared and they are automatically coprime; hence
		\begin{gather*}
			\#\bigl(\Fix(\iota_b)\cap\Adm_{\Fq}(\Fq)\bigr)=(q-1)(q-2)(q-3),\\
			\sum_{\alpha\neq1}\#\Fix(\alpha)=q(q-1)(q-2)(q-3),
		\end{gather*}
		and Burnside gives
		\[
		\begin{aligned}
			N_{\mathrm{orb}}(\Adm)&=\frac{A_q+q(q-1)(q-2)(q-3)}{q(q-1)}\\
			&=(q-2)^{2}(q^{2}-q+4)+(q-2)(q-3).
		\end{aligned}
		\]
		By Proposition~\ref{prop:stab} the triples with nontrivial stabiliser form orbits
		of length \(q(q-1)/2\), so their number of orbits is
		\(q(q-1)(q-2)(q-3)\big/\bigl(q(q-1)/2\bigr)=2(q-2)(q-3)\). The last assertion is
		Proposition~\ref{prop:stab}(ii).
	\end{proof}
	
	\begin{remark}\label{rem:not-moduli}
		The affine groupoid mass is not a count on the moduli stack of genus-2 curves. We
		quotient only by coordinate changes that preserve the chosen affine chart and the
		monic displayed-factor presentation. In particular the following are \emph{not}
		quotiented: projective transformations sending a finite branch point to infinity;
		quadratic twists; and identifications between distinct displayed splittings of
		one and the same curve, of which a genus-2 Jacobian has fifteen. The masses of
		Theorem~\ref{thm:mass} are therefore statements about affine-equivalence classes
		of displayed split Richelot data, not about isomorphism classes of curves or of
		principally polarized abelian surfaces. Two features make them appropriate invariants of the presentation. First, they are polynomials of degree four rather than six: the two factors \(q\) and \(q-1\) removed by the group are
		exactly the coordinate freedom in the displayed model. Second, the weighting by
		\(1/\#\Stab\) is the standard convention for counting objects with
		automorphisms. The ratios are unaffected:
		\(\mass_{\Aff}(\Dec)/\mass_{\Aff}(\Adm)=D_q/A_q\), and likewise for \(\Gfive\), so
		the densities of Corollary~\ref{cor:densities} hold verbatim.
	\end{remark}
	
	\section{Arithmetic of the decomposable stratum}\label{sec:split}
	
	Throughout this section \((u,v,w)\) is a square-free split input over \(\Fq\),
	\(q\) is odd, and \(\dcoef(u,v,w)=0\). Thus \(C\colon y^{2}=u(x)v(x)w(x)\) is a
	smooth genus-2 curve, and the equation \(y^{2}=UVW\) is not used as a genus-2
	model in this branch.
	
	\subsection{Pencil, involution and splitting class}
	
	\begin{proposition}\label{prop:pencil}
		Under the standing assumptions the following are equivalent:
		\begin{enumerate}[label=\textup{(\roman*)},leftmargin=*]
			\item \(\dcoef(u,v,w)=0\);
			\item the three quadratics lie in a common pencil;
			\item their degree-two pencil maps have the same deck involution in
			\(\PGL_2(\Fq)\);
			\item the three brackets \(U,V,W\) are pairwise proportional.
		\end{enumerate}
		If \(R=\Res(u,v)\), then the square class of \(R\) is independent of the pair
		chosen among \((u,v),(v,w),(w,u)\). We call this class the \emph{splitting class}
		of the configuration, and we write \(\iota\) for the common deck involution and
		\(h=M_2x^{2}+2M_1x+M_0\) for its fixed quadratic.
	\end{proposition}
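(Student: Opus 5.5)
I will prove the equivalences as the cycle (i)\(\Leftrightarrow\)(ii), (ii)\(\Rightarrow\)(iv)\(\Rightarrow\)(ii) and (ii)\(\Leftrightarrow\)(iii). Afterwards I will prove the square-class statement separately.

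\emph{(i)\(\Leftrightarrow\)(ii).} Since \(\dcoef=0\) is linear dependence of the three coefficient rows, Lemma~\ref{lem:pencil-normal} gives this directly: \(v=u+g\) and \(w=u+\lambda g\), so all three lie in the pencil spanned by \(u\) and \(v\).

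\emph{(ii)\(\Rightarrow\)(iv).} This is the computation in the proof of Proposition~\ref{prop:soundness}(iii), which yields \(U=-\alpha W\) and \(V=-\beta W\).

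\emph{(iv)\(\Rightarrow\)(ii).} Suppose \(U=cV\) with \(c\in\Fq^{\times}\); all brackets are nonzero by Lemma~\ref{lem:aligned}. Then \(U'V-UV'=c(V'V-VV')=0\). By Lemma~\ref{lem:three-identities}(b) this quantity equals \(-2\dcoef\,w\), and since \(w\neq0\) and \(p>2\) we get \(\dcoef=0\). Part (b) was verified as a polynomial identity, so it holds whatever the bracket degrees.

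\emph{(ii)\(\Leftrightarrow\)(iii).} By Lemma~\ref{lem:deck}, two coprime quadratics \(a,b\) determine the same involution as any basis of their pencil. Changing basis \((a,b)\mapsto(\lambda a+\mu b,\lambda' a+\mu' b)\) multiplies all three minors by the determinant of the change of basis, so \(\iota_{a,b}\) in \eqref{eq:iota} is unchanged projectively. Hence (ii) gives a common involution for the pairs \((u,v),(v,w),(w,u)\). Conversely, if two pairs \((u,v)\) and \((v,w)\) have the same involution, their fixed divisors \([u,v]\) and \([v,w]\) coincide up to a unit. That is the proportionality \(W\uptounit U\), and (iv)\(\Rightarrow\)(ii) applies. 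A small point to check here is that the common involution lies in \(\PGL_2(\Fq)\); this holds because its matrix has entries the minors in \(\Fq\) and determinant \(-R\neq0\).

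\emph{Square class.} Take \(a=u\) and \(b=v\). For \(w=\alpha u+\beta v\), bilinearity and the alternating property of the minors give
\[
(M_2,M_1,M_0)(v,w)=\alpha\,(M_2,M_1,M_0)(v,u)=-\alpha\,(M_2,M_1,M_0)(u,v).
\]
Since \(\Res=M_1^{2}-M_2M_0\) is quadratic in the minors, \(\Res(v,w)=\alpha^{2}\Res(u,v)\). In the same way \(\Res(w,u)=\beta^{2}\Res(u,v)\). Here \(\alpha\beta\neq0\) by the proof of Proposition~\ref{prop:soundness}(iii), so all three resultants lie in one square class.

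The main obstacle is the converse (iii)\(\Rightarrow\)(ii). An equality of involutions is only projective, so I will route it through proportional fixed divisors and Lemma~\ref{lem:three-identities}(b) rather than compare matrices directly. Finally, \(h\) is the common fixed quadratic of \(\iota\), namely any one of the proportional brackets.
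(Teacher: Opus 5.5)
Your proof is correct, but it takes a different route from the paper's for the converse implications. The paper argues entirely through Lemma~\ref{lem:deck}. A common pencil gives a common deck involution whose fixed divisor is each bracket. Conversely, proportional brackets give a common fixed divisor, hence a common involution, ``which forces the three quadratics into one pencil''. That last step is left implicit. Justifying it needs the fact that an involution determines its pencil: the pencil is the two-dimensional eigenspace (eigenvalue \(R\)) of \(s\mapsto s(\iota(x))(M_2x+M_1)^{2}\) on quadratics, and the fixed quadratic \(h\) spans the complementary eigenspace, with eigenvalue \(-R\neq R\).

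You bypass this completely. The identity \(U'V-UV'=-2\dcoef\,w\) of Lemma~\ref{lem:three-identities}(b) turns proportionality of a single pair of brackets directly into \(\dcoef=0\). For your step from (iii), where you obtain \(W\uptounit U\), it is the cyclic form \(W'U-WU'=-2\dcoef\,v\) that is actually used. You then reduce (iii) to (iv) by reading the minors off the matrix of the involution. This is more self-contained and slightly stronger: one proportional pair of brackets, or one coincidence between two of the three involutions, already forces all four conditions. The cost is reliance on identity (b), which the paper establishes by expansion and symbolic verification rather than conceptually.

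For (ii)\(\Rightarrow\)(iv) you reuse the explicit computation \(U=-\alpha W\), \(V=-\beta W\) from the proof of Proposition~\ref{prop:soundness}(iii), instead of the fixed-divisor identification. Your square-class argument, \(\Res(v,w)=\alpha^{2}\Res(u,v)\) and \(\Res(w,u)=\beta^{2}\Res(u,v)\) by bilinearity of the minors, is the same as the paper's argument via discriminants of proportional brackets, written at the level of the minors.
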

	
	\begin{proof}
		The equality \(\dcoef=0\) is equivalent to linear dependence of the three
		coefficient rows; because the quadratics are monic and pairwise
		non-proportional, any two of them span the third, which gives (i)\(\iff\)(ii).
		The three pencils are therefore equal, and Lemma~\ref{lem:deck} gives the same
		deck involution for every pair and identifies each bracket with its fixed
		divisor, which gives (ii)\(\iff\)(iii)\(\iff\)(iv). Conversely, proportional
		brackets have the same fixed divisor and hence the same pencil involution, which
		forces the three quadratics into one pencil. Finally
		Lemma~\ref{lem:three-identities}(a) gives \(\Disc(U)=4\Res(v,w)\) and its cyclic
		analogues, and the brackets differ by the common proportionality factors, whose
		ratios are squares; so the resultant square class is independent of the pair.
	\end{proof}
	
	The splitting class has a purely incidence-geometric description. It is the entry in the last column of Table~\ref{tab:ledger}, and it
	refines the count \eqref{eq:Dq-split} of \(\Dec\) into two explicit halves.
	
	\begin{proposition}\label{prop:class-line}
		Let \(P,Q,P'\in S\) be three distinct collinear points on a non-tangent line
		\(L\), so that the corresponding triple lies in \(\Dec\).
		\begin{enumerate}[label=\textup{(\roman*)},leftmargin=*]
			\item If \(L\) is the vertical line \(a=c\), then \(\Res(P,Q)\) is a nonzero
			square, so the splitting class is a square.
			\item If \(L\) is the non-vertical line \(b=ma+c\), and \(P=(a,b)\),
			\(Q=(a',b')\), then
			\begin{equation}\label{eq:res-line}
				\Res(P,Q)=(a'-a)^{2}\,(m^{2}+c).
			\end{equation}
			Hence the splitting class is the square class of \(m^{2}+c\): it is a square when
			\(L\) is secant to \(\Pi\) and a nonsquare when \(L\) is external.
		\end{enumerate}
	\end{proposition}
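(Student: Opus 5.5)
The plan is to compute \(\Res(P,Q)\) directly from the Bezoutian form of Lemma~\ref{lem:bezoutian}, namely \(\Res(P,Q)=M_1^{2}-M_2M_0\). The minors are taken from Lemma~\ref{lem:aligned} for monic input: \(M_2=a'-a\), \(M_1=b'-b\), \(M_0=ab'-a'b\). Once \(\Res(P,Q)\) is known, Proposition~\ref{prop:pencil} says the splitting class does not depend on which pair is chosen. So it is enough to determine the square class of \(\Res(P,Q)\) for one pair of points on \(L\). Nonvanishing comes for free, because \(L\) is non-tangent: by Lemma~\ref{lem:dictionary}(i) the two points are coprime.

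\emph{Case (i), \(L\) vertical.} Here \(a=a'=c\), so \(M_2=0\) and \(\Res(P,Q)=M_1^{2}=(b'-b)^{2}\). This is a nonzero square since \(P\neq Q\). Equivalently, \(P\) and \(Q\) differ by a nonzero constant, which is the observation already used in the proof of Theorem~\ref{thm:mass}.

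\emph{Case (ii), \(L\colon b=ma+c\).} Substitute \(b=ma+c\) and \(b'=ma'+c\). Then \(M_1=m(a'-a)\) and \(M_0=a(ma'+c)-a'(ma+c)=c(a-a')\). This gives
\[
M_1^{2}-M_2M_0=m^{2}(a'-a)^{2}+c(a'-a)^{2}=(a'-a)^{2}(m^{2}+c),
\]
which is \eqref{eq:res-line}. Since \(L\) is not vertical, \(a'\neq a\), so the square class of \(\Res(P,Q)\) equals that of \(m^{2}+c\).

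It remains to relate \(m^{2}+c\) to the line type, using the proof of Lemma~\ref{lem:census}. The intersection of \(L\) with \(\Pi\) is given by \(a^{2}-4ma-4c=0\), whose discriminant is \(16(m^{2}+c)\). The line is tangent when this vanishes (excluded here), secant when it is a nonzero square, and external when it is a nonsquare. Since \(16\) is a square, the claim follows.

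The only step needing care is fixing the sign convention in \(\Res=M_1^{2}-M_2M_0\) versus \(-\det\Bez\). That convention is already settled in \eqref{eq:bezdet}, so there is no real obstacle; the rest is the two-line substitution above.
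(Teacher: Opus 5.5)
Your proposal is correct and follows the paper's own proof essentially step for step. It uses the same monic minors \(M_2=a'-a\), \(M_1=b'-b\), \(M_0=ab'-a'b\), the Bezoutian identity \(\Res=M_1^{2}-M_2M_0\), the same substitution \(b=ma+c\), and the discriminant \(16(m^{2}+c)\) from the proof of Lemma~\ref{lem:census}. Your explicit remarks that \(a'\neq a\) on a non-vertical line and that Proposition~\ref{prop:pencil} makes the choice of pair irrelevant are details the paper leaves implicit.
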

	
	\begin{proof}
		For \(P=(a,b)\) and \(Q=(a',b')\) the minors of the pair are \(M_2=a'-a\),
		\(M_1=b'-b\), \(M_0=ab'-a'b\), and \(\Res(P,Q)=M_1^{2}-M_2M_0\) by
		Lemma~\ref{lem:bezoutian}. If \(L\) is vertical then \(a=a'\), so \(M_2=0\) and
		\(\Res(P,Q)=M_1^{2}=(b'-b)^{2}\), a nonzero square since \(P\neq Q\). If
		\(L\colon b=ma+c\) then \(b'-b=m(a'-a)\) and
		\[
		ab'-a'b=a(ma'+c)-a'(ma+c)=c(a-a'),
		\]
		so \(\Res(P,Q)=m^{2}(a'-a)^{2}-(a'-a)\cdot c(a-a')=(a'-a)^{2}(m^{2}+c)\), which is
		\eqref{eq:res-line}. The classification of \(L\) by the square class of
		\(m^{2}+c\) is the proof of Lemma~\ref{lem:census}.
	\end{proof}
	
	\begin{corollary}\label{cor:count-class}
		Write \(D_q^{\square}\) and \(D_q^{\not\square}\) for the numbers of ordered
		admissible triples with \(\dcoef=0\) and splitting class a square, respectively a
		nonsquare. Then
		\[
		D_q^{\square}=\tfrac12\,q(q-1)(q-2)^{2}(q-3),
		\qquad
		D_q^{\not\square}=\tfrac12\,q^{2}(q-1)^{2}(q-2),
		\]
		and \(D_q^{\square}+D_q^{\not\square}=D_q\). In particular
		\[
		\frac{D_q^{\not\square}}{D_q}=\frac{q(q-1)}{2(q^{2}-3q+3)}=\frac12+\frac1q+O(q^{-2}),
		\]
		so the two cases occur in asymptotically equal proportions, and the stabilised
		locus of Proposition~\ref{prop:stab} is contained in the square case.
	\end{corollary}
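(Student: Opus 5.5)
The count will be organized line by line, using the decomposition of \(D_q\) in \eqref{eq:Dq-split} together with Proposition~\ref{prop:class-line}.

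By Lemma~\ref{lem:dictionary} and the proof of Theorem~\ref{thm:counts}, every ordered triple in \(\Dec\) corresponds to three distinct points of \(L\cap S\) for exactly one non-tangent line \(L\). The line is unique because two distinct points determine it. Proposition~\ref{prop:class-line} states that the splitting class depends only on the type of \(L\): it is a square for vertical and secant lines and a nonsquare for external lines. Proposition~\ref{prop:pencil} guarantees that the class is well defined independently of the pair.

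Hence \(D_q^{\square}\) is the sum of the vertical and secant terms of \eqref{eq:Dq-split}, and \(D_q^{\not\square}\) is the external term. The external term is immediate:
\[
\tfrac{q(q-1)}{2}\,q(q-1)(q-2)=\tfrac12 q^{2}(q-1)^{2}(q-2).
\]
For the square case, factor out \(q(q-1)(q-3)\) from the two remaining terms. This leaves
\[
\tfrac12(q-2)(q-4)+(q-2)=\tfrac12(q-2)^{2},
\]
which gives
\[
D_q^{\square}=\tfrac12 q(q-1)(q-2)^{2}(q-3).
\]
The identity \(D_q^{\square}+D_q^{\not\square}=D_q\) holds automatically, since the three terms of \eqref{eq:Dq-split} sum to \eqref{eq:Dq}. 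As a direct check, \((q-2)(q-3)+q(q-1)=2(q^{2}-3q+3)\).

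Dividing the external term by \eqref{eq:Dq} gives
\[
\frac{D_q^{\not\square}}{D_q}=\frac{q(q-1)}{2(q^{2}-3q+3)}.
\]
To expand it, write
\[
\frac{q^{2}-q}{q^{2}-3q+3}=1+\frac{2q-3}{q^{2}-3q+3}=1+\frac2q+O(q^{-2}),
\]
so the ratio is \(\tfrac12+\tfrac1q+O(q^{-2})\). Finally, Proposition~\ref{prop:stab}(iii) identifies the stabilised locus with the vertical-line part of \(\Dec\). Vertical lines fall in the square case by Proposition~\ref{prop:class-line}(i), so the stabilised locus lies in the square case.

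There is no real obstacle here. The only point needing care is that each decomposable triple is counted on exactly one line, which holds because two distinct points determine a unique line. A secondary concern is that the independence of the class from the chosen pair has already been established, which it has in Proposition~\ref{prop:pencil}.
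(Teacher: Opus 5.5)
Your proposal is correct and follows the paper's proof. You use Proposition~\ref{prop:class-line} to identify the square case with the vertical and secant terms of \eqref{eq:Dq-split} and the nonsquare case with the external term, add the terms (your factorization differs only cosmetically from the paper's), and get the stabilised-locus containment from Proposition~\ref{prop:stab}(iii) together with Proposition~\ref{prop:class-line}(i). You also write out the sum check and the expansion \(\tfrac12+\tfrac1q+O(q^{-2})\), which the paper leaves implicit.
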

	
	\begin{proof}
		By Proposition~\ref{prop:class-line} the square case is exactly the union of the
		vertical and secant contributions to \eqref{eq:Dq-split}, and the nonsquare case
		is the external contribution. Adding the first two terms of
		\eqref{eq:Dq-split} gives
		\[
		\begin{aligned}
			q(q-1)(q-2)(q-3)&+\tfrac12 q(q-1)(q-2)(q-3)(q-4)\\
			&=\tfrac12 q(q-1)(q-2)(q-3)\bigl(2+(q-4)\bigr)\\
			&=\tfrac12 q(q-1)(q-2)^{2}(q-3),
		\end{aligned}
		\]
		and the third term is \(\tfrac12 q^{2}(q-1)^{2}(q-2)\). The containment of the
		stabilised locus is Proposition~\ref{prop:stab}(iii) together with
		Proposition~\ref{prop:class-line}(i).
	\end{proof}
	
	\subsection{The extra involution}
	
	The deck involution of the pencil is not merely an auxiliary object: it lifts to
	an automorphism of \(C\). This is what makes the branch \(\dcoef=0\) a statement
	about the curve rather than about the presentation.
	
	\begin{proposition}\label{prop:extra-involution}
		Let \((u,v,w)\) be a square-free split input with \(\dcoef=0\), let
		\(R\) be the splitting class and \((M_2,M_1,M_0)\) the minors of \((u,v)\), and
		let \(c\) satisfy \(c^{2}=R^{3}\). Then
		\begin{equation}\label{eq:tau}
			\tau\colon (x,y)\longmapsto\Bigl(\iota(x),\ \frac{c\,y}{(M_2x+M_1)^{3}}\Bigr),
			\qquad \iota(x)=\frac{-M_1x-M_0}{M_2x+M_1},
		\end{equation}
		is an automorphism of \(C\) with \(\tau^{2}=\mathrm{id}\), and \(\tau\) is not the
		hyperelliptic involution. Consequently the reduced automorphism group of \(C\)
		contains \(\langle\iota\rangle\cong C_2\). Moreover:
		\begin{enumerate}[label=\textup{(\roman*)},leftmargin=*]
			\item \(\tau\) is defined over \(\Fq\) if and only if the splitting class is a
			square; otherwise it is defined over \(\F_{q^{2}}\);
			\item \(\iota\) is an affine map of the \(x\)-line, equivalently fixes the point at
			infinity, if and only if \(M_2=0\) for one and hence for all three pairs,
			equivalently \(u_1=v_1=w_1\), that is, exactly on the stabilised locus of
			Proposition~\ref{prop:stab}.
		\end{enumerate}
	\end{proposition}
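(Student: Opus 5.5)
The plan is to reduce everything to the scaling identity \eqref{eq:scaling} of Lemma~\ref{lem:deck}, applied to the pair \((u,v)\) and its pencil. By Proposition~\ref{prop:pencil}, \(w\) lies in the same pencil and \(\iota=\iota_{u,v}\). Identity \eqref{eq:scaling} then holds for each of \(u,v,w\) with the same constant \(R=\Res(u,v)\). Multiplying the three identities gives
\[
f(\iota(x))\,(M_2x+M_1)^{6}=R^{3}f(x).
\]
Put \(x'=\iota(x)\) and \(y'=cy/(M_2x+M_1)^{3}\). Then \(y'^{2}=c^{2}y^{2}/(M_2x+M_1)^{6}=R^{3}f(x)/(M_2x+M_1)^{6}=f(x')\), so \(\tau\) maps \(C\) to itself as a rational map. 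It is invertible because \(\tau^2=\mathrm{id}\), so it extends to an automorphism of the smooth projective model.

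To check \(\tau^{2}=\mathrm{id}\), note that the \(x\)-part is an involution. For the \(y\)-part, compose directly:
\[
\tau^{2}(x,y)=\Bigl(x,\ \frac{c^{2}y}{(M_2\iota(x)+M_1)^{3}(M_2x+M_1)^{3}}\Bigr).
\]
Now compute
\[
M_2\iota(x)+M_1=\frac{-M_2M_1x-M_2M_0+M_1M_2x+M_1^{2}}{M_2x+M_1}=\frac{R}{M_2x+M_1},
\]
using \(R=M_1^{2}-M_2M_0\) from Lemma~\ref{lem:bezoutian}. The denominator is therefore \(R^{3}\), and since \(c^{2}=R^{3}\) we get \(\tau^{2}=\mathrm{id}\). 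Because \(\iota\neq\mathrm{id}\) (its matrix is not scalar, since \(R\neq0\)), \(\tau\) acts nontrivially on \(x\), so it is neither the identity nor the hyperelliptic involution. Its image in the reduced automorphism group is therefore \(\iota\), of order two.

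For (i), the map \(\iota\) is defined over \(\Fq\). Hence \(\tau\) is \(\Fq\)-rational exactly when one can take \(c\in\Fq\). Since \(c^{2}=R^{3}=R\cdot R^{2}\), such \(c\) exists iff \(R\) is a square in \(\Fq\). Otherwise \(c\in\F_{q^2}\), since every element of \(\Fq\) is a square in \(\F_{q^{2}}\). The only other \(\Fq\)-rational lift is \(\tau\) composed with the hyperelliptic involution, which replaces \(c\) by \(-c\), so it does not rescue rationality. The class is independent of the pair by Proposition~\ref{prop:pencil}.

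For (ii), the Möbius map \eqref{eq:iota} fixes \(\infty\) iff its lower-left entry \(M_2\) vanishes. The three pairs give proportional brackets (Proposition~\ref{prop:pencil}(iv)), so their leading coefficients \(M_2\) vanish simultaneously. For monic input \(M_2\) is a difference of middle coefficients (Lemma~\ref{lem:aligned}), so \(M_2=0\) for all pairs iff \(u_1=v_1=w_1\), which is the stabilised locus by Proposition~\ref{prop:stab}(i).

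The main obstacle I expect is ensuring that \(\tau\) is a genuine automorphism at the points where \(M_2x+M_1=0\) and at infinity. I would handle this by noting that a birational self-map of a smooth projective curve extends uniquely to an automorphism, so no pointwise check is needed.
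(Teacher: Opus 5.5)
Your proposal is correct and follows the paper's proof essentially step for step. You multiply three instances of \eqref{eq:scaling} to obtain the sextic identity, use $M_2\iota(x)+M_1=R/(M_2x+M_1)$ to get $\tau^{2}=\mathrm{id}$, reduce (i) to solvability of $c^{2}=R\cdot R^{2}$ in $\Fq$, and settle (ii) through proportionality of the brackets together with Lemma~\ref{lem:aligned}. Your extra remarks fill in points the paper leaves implicit: that the birational map $\tau$ extends to the smooth projective model, and that the other lift of $\iota$ ($\tau$ composed with the hyperelliptic involution) only replaces $c$ by $-c$.
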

	
	\begin{proof}
		By Proposition~\ref{prop:pencil} the three factors lie in the pencil spanned by
		\(u\) and \(v\), so \eqref{eq:scaling} applies to each of them and multiplying the
		three instances gives
		\begin{equation}\label{eq:f-scaling}
			f\bigl(\iota(x)\bigr)\,(M_2x+M_1)^{6}=R^{3}f(x).
		\end{equation}
		If \(c^{2}=R^{3}\), then for \((x,y)\in C\),
		\[
		\Bigl(\frac{cy}{(M_2x+M_1)^{3}}\Bigr)^{2}
		=\frac{R^{3}f(x)}{(M_2x+M_1)^{6}}=f\bigl(\iota(x)\bigr),
		\]
		so \(\tau\) maps \(C\) to \(C\). A direct computation gives
		\[
		M_2\iota(x)+M_1=\frac{M_1^{2}-M_2M_0}{M_2x+M_1}=\frac{R}{M_2x+M_1},
		\]
		whence \((M_2x+M_1)^{3}\bigl(M_2\iota(x)+M_1\bigr)^{3}=R^{3}=c^{2}\) and therefore
		\(\tau^{2}=\mathrm{id}\). Since \(\iota\neq\mathrm{id}\), \(\tau\) acts
		nontrivially on \(x\) and is not the hyperelliptic involution.
		
		(i) The equation \(c^{2}=R^{3}=R\cdot R^{2}\) has a solution in \(\Fq\) exactly
		when \(R\) is a square, and always has one in \(\F_{q^{2}}\).
		
		(ii) The map \eqref{eq:iota} is affine exactly when \(M_2=0\). By
		Proposition~\ref{prop:pencil} the three brackets are proportional, so their
		leading coefficients vanish simultaneously; and for monic quadratics the leading
		minor of a pair is the difference of the middle coefficients
		(Lemma~\ref{lem:aligned}), so \(M_2=0\) for all three pairs is exactly
		\(u_1=v_1=w_1\). This is the condition of Proposition~\ref{prop:stab}(i).
	\end{proof}
	
	\begin{remark}
		\label{rem:bolza}
		Bolza \cite[\S11]{Bolza1887} attached to a binary sextic an invariant, expressible
		as a \(3\times3\) determinant in the Clebsch invariants, whose vanishing
		characterises the genus-2 curves carrying an extra involution; see
		\cite[Prop.~4.3]{KatsuraTakashima2020} for the equivalence, at the level of the
		reduced automorphism group, between the existence of an involution and the
		existence of a decomposed Richelot isogeny. Proposition~\ref{prop:extra-involution}
		proves one direction of the corresponding coefficient-level statement: a
		displayed splitting with \(\dcoef=0\) produces an explicit extra involution. We do
		not prove the converse here, and in particular we do not claim that
		\(\dcoef=0\) for one of the fifteen quadratic splittings of a given sextic is
		equivalent to the vanishing of Bolza's invariant; that equivalence is a
		moduli-level statement whose \(\Fq\)-rational form would require a separate
		treatment of the Galois action on the fifteen splittings (\S\ref{sec:limits}).
		What the present section does assert is that \(\dcoef\) is a local,
		splitting-dependent counterpart of that global invariant, computable in
		\(2\mathrm{M}\).
	\end{remark}
	
	\subsection{Explicit elliptic factors}
	
	\begin{lemma}\label{lem:fixed-avoid}
		Under the standing assumptions, neither fixed point of \(\iota\) is a root of
		\(f=uvw\).
	\end{lemma}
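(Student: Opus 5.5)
The plan is to use that the fixed points of \(\iota\) are the roots of the common fixed quadratic \(h=M_2x^{2}+2M_1x+M_0\), and to show that \(h\) is coprime to each of \(u,v,w\). By Proposition~\ref{prop:pencil}, \(\dcoef=0\) puts all three factors in the pencil spanned by \(u\) and \(v\). The brackets \(U,V,W\) are pairwise proportional to \(h\) and have the same fixed divisor. So it suffices to show that a root \(\xi\) of \(h\) in \(\overline{\F}_q\) cannot be a root of any element \(s\) of the pencil that is one of \(u,v,w\).

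\emph{Step 1 (the scaling identity).} Evaluate \eqref{eq:scaling} at a fixed point \(\xi\) of \(\iota\). For every \(s\) in the pencil this gives \(s(\xi)(M_2\xi+M_1)^{2}=R\,s(\xi)\). This alone is uninformative, so use a finer argument. Suppose \(s(\xi)=0\) for some \(s\in\{u,v,w\}\). Let \(\eta\) be the other root of \(s\). The fibres of \(\varphi_{u,v}\) are exactly the zero sets of pencil members. Since \(s\) lies in the pencil, \(\{\xi,\eta\}\) is a fibre of \(\varphi_{u,v}\), so \(\iota\) permutes it. Because \(\iota(\xi)=\xi\), also \(\iota(\eta)=\eta\). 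An involution of \(\PP^{1}\) swapping nothing in a two-point fibre means that fibre is a branch fibre, so \(\xi=\eta\), i.e.\ \(s\) has a double root. This contradicts \(\Disc(s)\neq0\), which is assumption \(C_1\).

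\emph{Step 2 (a direct algebraic version).} As a backup avoiding fibre language, I would argue via the bracket. If \(s\) and \(t\) are the two generators of the pencil and \(s(\xi)=0\), then \([s,t](\xi)=s'(\xi)t(\xi)\). Here \([s,t]\) is a unit multiple of \(h\) by Proposition~\ref{prop:pencil}(iv), so \(s'(\xi)t(\xi)=0\). If \(t(\xi)=0\), then \(s\) and \(t\) share a root, contradicting \(C_2\). If \(s'(\xi)=0\), then \(\xi\) is a repeated root of \(s\), contradicting \(C_1\). This is exactly the argument in the proof of Proposition~\ref{prop:soundness}(i). I would present this version as the main proof, applied with \(s\) ranging over \(u,v,w\) and \(t\) a coprime partner among the other two.

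\emph{Step 3 (the point at infinity).} When \(M_2=0\), one fixed point of \(\iota\) is \(\infty\), which is not a root of the degree-six \(f\). The finite fixed point \(-M_0/(2M_1)\) is handled by Step~2, since \(h\) is then linear with \(M_1\neq0\) by Lemma~\ref{lem:aligned}. The only mild obstacle is this projective bookkeeping and confirming that \(h\) is genuinely nonzero. Nonvanishing follows because \(\Disc(h)=4R\neq0\) by Lemma~\ref{lem:three-identities}(a).
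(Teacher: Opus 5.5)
Your proposal is correct, and your main argument (Step~2) is exactly the paper's proof: evaluate the bracket proportional to \(h\) at a putative common root \(\xi\) to get \(s'(\xi)t(\xi)=0\), then rule this out by coprimality or by separability. Your explicit check of the fixed point at infinity when \(M_2=0\) is bookkeeping that the paper leaves implicit, and the fibre argument of Step~1 is a valid but unneeded alternative.
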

	
	\begin{proof}
		Let \(\xi\) be a root of the common fixed quadratic \(h\); by
		Lemma~\ref{lem:deck} and Proposition~\ref{prop:pencil}(iv), \(h\) is proportional
		to each of the three brackets. If \(u(\xi)=0\), use \(h\uptounit W=u'v-uv'\):
		then \(W(\xi)=u'(\xi)v(\xi)\), and \(v(\xi)\neq0\) by coprimality, so
		\(W(\xi)=0\) forces \(u'(\xi)=0\), a double root of \(u\), contradicting
		\(\Disc(u)\neq0\). The case \(v(\xi)=0\) is symmetric. If \(w(\xi)=0\), use
		\(h\uptounit U=v'w-vw'\): then \(U(\xi)=-v(\xi)w'(\xi)=0\) forces \(v(\xi)=0\),
		excluded, or \(w'(\xi)=0\), contradicting \(\Disc(w)\neq0\).
	\end{proof}
	
	\begin{theorem}\label{thm:split}
		Let \(R\) be the splitting class and let \(K/\Fq\) be a field over which the two
		fixed points \(\alpha,\beta\in\PP^{1}(K)\) of \(\iota\) are rational. Assume,
		after a projective change of coordinate over \(K\), that \(\alpha\) and \(\beta\)
		are finite; by Lemma~\ref{lem:fixed-avoid}, \(f(\alpha)f(\beta)\neq0\)
		automatically. Put
		\[
		T=\frac{x-\alpha}{x-\beta},\qquad\text{so}\qquad x(T)=\frac{\alpha-\beta T}{1-T}.
		\]
		Then, after multiplying by a nonzero scalar in \(K\), the transformed sextic has
		the form
		\begin{gather*}
			\tilde f(T):=(1-T)^{6}f\bigl(x(T)\bigr)=F(T^{2}),\\
			F(X)=f_3X^{3}+f_2X^{2}+f_1X+f_0,\qquad f_0f_3\neq0 .
		\end{gather*}
		The curves
		\[
		E_1\colon Y^{2}=F(X),\qquad E_2\colon Z^{2}=f_0X^{3}+f_1X^{2}+f_2X+f_3
		\]
		are elliptic curves over \(K\), the maps
		\[
		\pi_1\colon(T,\tilde y)\mapsto(T^{2},\tilde y),\qquad
		\pi_2\colon(T,\tilde y)\mapsto(T^{-2},\tilde yT^{-3})
		\]
		are degree-two morphisms from \(C_K\) to \(E_1\) and \(E_2\), and the induced
		homomorphism
		\[
		(\pi_{1,*},\pi_{2,*})\colon\Jac(C_K)\longrightarrow E_1\times E_2
		\]
		is the Richelot quotient associated with the displayed splitting.
	\end{theorem}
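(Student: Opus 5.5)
We work in the coordinate \(T=(x-\alpha)/(x-\beta)\), in which the fixed points \(\alpha,\beta\) of \(\iota\) go to \(T=0\) and \(T=\infty\). An involution of \(\PP^{1}\) fixing \(0\) and \(\infty\) is \(T\mapsto -T\), since it is diagonal of order two and not the identity. So in the \(T\)-coordinate, \(\iota\) becomes \(T\mapsto -T\).

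\textbf{Step 1: evenness of \(\tilde f\).} Transporting \eqref{eq:f-scaling} through the substitution \(x=x(T)\) shows \(\tilde f(-T)=\kappa\,\tilde f(T)\) for a constant \(\kappa\in K^\times\). Comparing the two sides as polynomial identities gives \(\kappa^2=1\).

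\(\tilde f\) has degree exactly six and nonzero constant term. Its constant term is \(f(\alpha)\) up to a unit. Its \(T^6\) coefficient is \((\beta-\alpha)^6\) times the leading coefficient of \(f\), up to a unit, plus a lower contribution that cancels as usual because \(\beta\) is finite. Both are nonzero by Lemma~\ref{lem:fixed-avoid}. Evaluating at \(T=0\) then gives \(\kappa=1\). Hence \(\tilde f\) is even, \(\tilde f=F(T^{2})\), and \(f_0f_3\neq0\) by the same two nonvanishing statements.

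Alternatively, Step 1 can be done factor by factor. Each of \(u,v,w\) is \(\iota\)-stable up to scalar by \eqref{eq:scaling}. So in \(T\) each transformed factor is a quadratic whose root set is stable under \(T\mapsto -T\) and avoids \(0\) and \(\infty\). Such a quadratic has the form \(T^{2}-\gamma\) up to a scalar, and the product of the three is a cubic in \(T^{2}\). I would use this route, since it avoids the sign bookkeeping.

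\textbf{Step 2: the elliptic curves and covers.} \(F\) is square-free of degree three: its roots \(\gamma_i\) are the squares of the six distinct roots of \(\tilde f\), which pair up as \(\pm\). Hence \(E_1\) is elliptic. The reversed cubic \(X^{3}F(1/X)\) has roots \(1/\gamma_i\) and \(f_0\neq0\), so \(E_2\) is elliptic too. For \(\pi_1\), the check \(\tilde y^{2}=F(T^{2})\) is immediate. For \(\pi_2\) with \(X=T^{-2}\),
\[
(\tilde yT^{-3})^{2}=T^{-6}F(T^{2})=f_0X^{3}+f_1X^{2}+f_2X+f_3 .
\]
Both maps are quotients by involutions of \(C_K\): the first by \((T,\tilde y)\mapsto(-T,\tilde y)\), the second by its composite with the hyperelliptic involution. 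So each has degree two.

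\textbf{Step 3: identification with the Richelot quotient (main obstacle).} The composite \(\pi_{1,*}+\pi_{2,*}\) kills \(\ker\) of the map \(\Jac(C)\to E_1\times E_2\). The map \((\pi_{1,*},\pi_{2,*})\) is a \((2,2)\)-isogeny: it is the standard complementary-subcover construction \cite{Kuhn1988,Smith2005}, with \(\pi_1^*E_1\) and \(\pi_2^*E_2\) complementary, and the composite with the dual is multiplication by two. It remains to show that its kernel is the subgroup attached to the splitting, namely \(\{0,[\mathrm{roots}(u)],[\mathrm{roots}(v)],[\mathrm{roots}(w)]\}\), where \([D]\) is the class of the difference of the two Weierstrass points in \(D\).

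Take \(D_u=(P_1)-(P_2)\) for the roots of \(u\), which in \(T\) are \(\pm\sqrt{\gamma_1}\). Under \(\pi_1\) these two Weierstrass points map to the same point \((\gamma_1,0)\), so \(\pi_{1,*}D_u=0\). Under \(\pi_2\) they map to the same point \((1/\gamma_1,0)\) as well, so \(\pi_{2,*}D_u=0\). The same holds for \(v\) and \(w\). Thus the splitting's kernel, of order four, lies in the kernel of a degree-four isogeny, so the two coincide. Classical uniqueness of the quotient by a given maximal isotropic subgroup then identifies \(E_1\times E_2\) with the Richelot quotient. This matches Proposition~\ref{prop:soundness}(iii) on the degenerate side.

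The delicate points are the degree count and the correct polarization on \(E_1\times E_2\). I would handle them by citing the classical double-cover description rather than recomputing.
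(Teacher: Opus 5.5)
Your proposal is correct and follows essentially the same route as the paper. The shared steps are: evenness of each transformed factor from the pencil scaling identity \eqref{eq:scaling} together with Lemma~\ref{lem:fixed-avoid}; the covers as quotients by \(T\mapsto -T\) and by its composite with the hyperelliptic involution; and \(G\subseteq\ker(\pi_{1,*},\pi_{2,*})\) combined with the cited degree-four statement (Kuhn) to force equality. Your representative \((P_1)-(P_2)\) makes the vanishing of both push-forwards immediate at the divisor level, without tracking the points over \(T=\infty\).

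One small correction to your alternative Step 1: the \(T^{6}\) coefficient of \(\tilde f\) is exactly \(f(\beta)\), not \((\beta-\alpha)^{6}\lc(f)\) plus corrections. That is precisely why Lemma~\ref{lem:fixed-avoid} gives its nonvanishing.
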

	
	\begin{proof}
		In the coordinate \(T\) the involution \(\iota\) is conjugate to
		\(\sigma\colon T\mapsto-T\). Each member of the common pencil is invariant under
		\(\sigma\) up to a scalar by \eqref{eq:scaling}, and since none of \(u,v,w\)
		vanishes at a fixed point (Lemma~\ref{lem:fixed-avoid}) that scalar is \(+1\);
		hence every factor, and therefore the transformed sextic, is an even polynomial
		in \(T\) with nonzero constant and leading coefficients. Square-freeness of \(f\)
		implies that both cubics are separable. The displayed formulas for \(\pi_1\) and
		\(\pi_2\) follow from the equations, and their degrees are two: they are the
		quotients by \(\sigma\) and by \(\iota_C\sigma\), with \(\iota_C\) the
		hyperelliptic involution.
		
		It remains to identify \(\ker(\pi_{1,*},\pi_{2,*})\) with the Richelot kernel
		\(G=\{0,[u],[v],[w]\}\). In the coordinate \(T\) each of \(\tilde u,\tilde v,
		\tilde w\) is even, so its two roots form a pair \(\{\rho,-\rho\}\). Under
		\(\pi_1\) both map to the single point \((\rho^{2},0)\in E_1[2]\), while the two
		points of \(C_K\) above \(T=\infty\) map to \(O_{E_1}\); hence
		\(\pi_{1,*}[\tilde u]=2\bigl[(\rho^{2},0)-O_{E_1}\bigr]=0\). Under \(\pi_2\) the
		same pair maps to \((\rho^{-2},0)\in E_2[2]\) and the two points above
		\(T=\infty\) map to a pair of mutually inverse points, so
		\(\pi_{2,*}[\tilde u]=0\) as well. The same computation applies to \(\tilde v\)
		and \(\tilde w\), whence \(G\subseteq\ker(\pi_{1,*},\pi_{2,*})\). By Kuhn
		\cite[\S1]{Kuhn1988}, for a genus-2 curve carrying two complementary optimal
		degree-2 elliptic subcovers the map \((\pi_{1,*},\pi_{2,*})\) is an isogeny of
		degree 4; since \(\#G=4\), the inclusion is an equality.
	\end{proof}
	
	\begin{corollary}[Field of definition]\label{cor:field}
		Let \(R\) be the splitting class.
		\begin{enumerate}[label=\textup{(\roman*)},leftmargin=*]
			\item If \(R\) is a square in \(\Fq\), the ordered pair \((E_1,E_2)\) of
			Theorem~\ref{thm:split} is defined over \(\Fq\).
			\item If \(R\) is a nonsquare, the construction is defined over \(\F_{q^{2}}\)
			and the nontrivial element of \(\mathrm{Gal}(\F_{q^{2}}/\Fq)\) exchanges the two
			labelled factors together with their covers.
		\end{enumerate}
		In either case the Richelot quotient itself, that is the abelian surface
		\(\Jac(C)/G\) with its induced principal polarization, is defined over \(\Fq\),
		because the kernel \(G=\{0,[u],[v],[w]\}\) is \(\Fq\)-rational. In case (ii) no
		single labelled elliptic factor is an \(\Fq\)-rational output; the Galois-stable
		unordered factor datum is.
	\end{corollary}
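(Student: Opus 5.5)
The plan is to track the Galois action on the fixed points \(\alpha,\beta\) of \(\iota\) and read off how it acts on the construction of Theorem~\ref{thm:split}. Since \(\iota\) has \(\Fq\)-rational coefficients (the minors of \((u,v)\)), its fixed quadratic \(h=M_2x^{2}+2M_1x+M_0\) lies in \(\Fq[x]\). By Lemma~\ref{lem:three-identities}(a) its discriminant is \(4R\) in the generic case \(M_2\neq0\). So \(\alpha,\beta\in\PP^{1}(\Fq)\) exactly when \(R\) is a square. If \(M_2=0\), then \(\iota\) is affine and its fixed points are \(\infty\) and \(-M_0/(2M_1)\), both rational, while \(R=M_1^{2}\) is a square by Proposition~\ref{prop:class-line}(i). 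Hence \(\alpha,\beta\) are individually \(\Fq\)-rational iff \(R\) is a square, and otherwise they are conjugate over \(\F_{q^{2}}\).

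\emph{Case (i).} We may take \(K=\Fq\). The projective change of coordinate that makes \(\alpha,\beta\) finite can be chosen over \(\Fq\), since \(q\ge3\) leaves room for a rational point different from both. Then \(T=(x-\alpha)/(x-\beta)\) is \(\Fq\)-rational. The normalizing scalar can be taken in \(\Fq^{\times}\), for instance \(1\) up to the fixed leading factor coming from \(f\). So \(F\) has \(\Fq\)-coefficients, and \(E_1,E_2,\pi_1,\pi_2\) are all defined over \(\Fq\).

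\emph{Case (ii).} Let \(\sigma\) be the Frobenius of \(\F_{q^{2}}/\Fq\); it swaps \(\alpha\) and \(\beta\). Applying it to \(T\) gives \(T^{\sigma}=(x-\beta)/(x-\alpha)=T^{-1}\). Under \(T\mapsto T^{-1}\) we have \(\tilde f(T)=F(T^{2})\) becoming \(T^{-6}\) times the reversed polynomial, which is exactly \(f_0X^{3}+f_1X^{2}+f_2X+f_3\) evaluated at \(X=T^{2}\). So \(\sigma\) carries the model of \(E_1\) to that of \(E_2\) up to the scalar normalization. It also carries \(\pi_1\colon(T,\tilde y)\mapsto(T^{2},\tilde y)\) to \((T^{-2},\tilde yT^{-3})=\pi_2\), after tracking how \(\tilde y=(1-T)^{3}y\) and the normalizing scalar transform.

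The main obstacle I expect is this scalar bookkeeping. One must check that the constant multiplying \(\tilde f\) and the factor relating \(\tilde y\) to \(y\) transform compatibly under \(\sigma\), so that \(\sigma(E_1)\) is literally \(E_2\) rather than a quadratic twist of it. I would handle this by choosing the scalar as an explicit norm-compatible expression such as \((\alpha-\beta)^{-3}\), then verifying \(\sigma\) on it directly. If needed I would absorb the discrepancy into a rescaling of \(Z\), which is harmless because isomorphisms of the labelled covers are allowed.

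Finally, for the quotient: \([u],[v],[w]\) are classes of \(\Fq\)-rational Weierstrass divisors, so \(G\) is pointwise \(\Fq\)-rational. Hence \(\Jac(C)/G\) and its induced principal polarization descend to \(\Fq\) by the standard quotient-by-rational-finite-subgroup argument. In case (ii) the set \(\{E_1,E_2\}\) with its covers is Galois-stable, while \(\sigma\) swaps the two labels, so neither labelled factor is \(\Fq\)-rational.
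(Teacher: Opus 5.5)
Your proposal is correct and follows the paper's own argument: \(\Disc(h)=4R\) decides whether the fixed points of \(\iota\) are \(\Fq\)-rational; in the nonsquare case Frobenius swaps them, so \(T\mapsto T^{-1}\) reverses \(F\) and exchanges \(E_1\) and \(E_2\); and the quotient descends because \(G\) is \(\Fq\)-rational. Your separate handling of \(M_2=0\) and your scalar bookkeeping fill in details the paper leaves implicit, and the bookkeeping causes no trouble: with no rescaling one gets \(\sigma(E_1)=E_2\) and \(\sigma(\pi_1)=[-1]\circ\pi_2\), while replacing \(\tilde y\) by \((\alpha-\beta)^{-3}\tilde y\) (which scales the sextic by \((\alpha-\beta)^{-6}\in\Fq\)) gives \(\sigma(\pi_1)=\pi_2\) exactly.
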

	
	\begin{proof}
		The fixed points of \(\iota\) are rational exactly when \(h\) splits, that is
		when \(\Disc(h)=4R\) has square class that of \(R\)
		(Lemma~\ref{lem:three-identities}(a)). In the nonsplit case they become rational
		over \(\F_{q^{2}}\) and Galois interchanges them; this replaces \(T\) by
		\(T^{-1}\), which reverses the cubic \(F\) and exchanges \(E_1\) and \(E_2\).
		Rationality of the quotient is standard: \(G\) is stable under the \(q\)-power
		Frobenius because the three quadratics are, so the isogeny \(\Jac(C)\to\Jac(C)/G\)
		and the polarization induced by \(2\Theta\) are defined over \(\Fq\).
	\end{proof}
	
	\begin{remark}\label{rem:no-descent}
		We do not claim an effective descent datum exhibiting the quotient as a product
		over \(\Fq\) in case (ii). The statements used below concern the Weil polynomial
		of \(\Jac(C)\) and the Galois-stable unordered factor datum, both of which are
		\(\Fq\)-rational, and both of which are established directly in
		Corollary~\ref{cor:weil} without any descent theorem.
	\end{remark}
	
	\subsection{Frobenius data}
	
	We use the reciprocal normalization of the Weil polynomial: for an abelian
	variety \(A\) over \(\Fq\) with \(q\)-power Frobenius \(\pi\),
	\[
	P_A(T)=\det\bigl(1-T\pi\mid V_\ell A\bigr)=\prod_i(1-\alpha_iT),
	\]
	so that \(\#A(\Fq)=P_A(1)\), and for a curve \(C\) of genus 2,
	\(\#C(\Fq)=q+1-\sum_i\alpha_i\), that is, the coefficient of \(T\) in
	\(P_{\Jac(C)}\) equals \(-(q+1-\#C(\Fq))\).
	
	\begin{lemma}\label{lem:frob-exchange}
		Let \(A\) be an abelian surface over \(\Fq\) such that over \(K=\F_{q^{2}}\) one
		has \(A_K\sim E_1\times E_2\) with the \(q\)-power Frobenius \(\pi\) interchanging
		the two factors. Then in a basis of \(V_\ell A\) adapted to the decomposition,
		\(\pi\) is represented by a block matrix \(\left(\begin{smallmatrix}0&B\\A&0\end{smallmatrix}\right)\)
		with \(2\times2\) blocks, and
		\[
		P_A(T)=\det\bigl(1-T^{2}BA\bigr)=P_{E_1/\F_{q^{2}}}(T^{2}).
		\]
		In particular \(P_A\) is even, and \(\#A(\Fq)=\#E_1(\F_{q^{2}})\).
	\end{lemma}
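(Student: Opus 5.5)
We argue on the rational \(\ell\)-adic Tate module, with \(\ell\neq p\). Over \(K=\F_{q^{2}}\) the isogeny \(A_K\sim E_1\times E_2\) gives \(V_\ell A\cong V_\ell E_1\oplus V_\ell E_2\) as \(\mathbb{Q}_\ell\)-spaces, and each summand has dimension two. The \(q\)-power Frobenius \(\pi\) of \(A\) acts on \(V_\ell A\) and satisfies \(\pi^{2}=\pi_K\), the \(q^{2}\)-power Frobenius. The hypothesis that \(\pi\) interchanges the two factors means that \(\pi\) maps the summand \(V_\ell E_1\) into \(V_\ell E_2\) and \(V_\ell E_2\) into \(V_\ell E_1\). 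To make this precise we transport the Galois action through the isogeny, so that \(\pi\) carries the image of \(V_\ell E_1\) to the image of \(V_\ell E_{2}\), where \(E_2={}^{\sigma}E_1\).

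Choose a basis adapted to the decomposition. Then \(\pi\) has vanishing diagonal blocks and takes the block form \(\left(\begin{smallmatrix}0&B\\A&0\end{smallmatrix}\right)\) with \(A\colon V_\ell E_1\to V_\ell E_2\) and \(B\colon V_\ell E_2\to V_\ell E_1\). Squaring gives \(\pi^{2}=\operatorname{diag}(BA,AB)\). Hence \(BA\) is the action of \(\pi_K\) on \(V_\ell E_1\), which is the \(q^{2}\)-Frobenius of \(E_1\) over \(K\) under the chosen identification.

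Next compute the determinant by Schur complement. We have
\[
\det\begin{pmatrix}I&-TB\\-TA&I\end{pmatrix}=\det(I-T^{2}BA),
\]
which is valid because the lower-right block is the identity. Since \(\det(1-T^{2}BA)\) is the characteristic-type polynomial of \(\pi_K\) on \(V_\ell E_1\), it equals \(P_{E_1/\F_{q^{2}}}(T^{2})\). Evenness of \(P_A\) is then immediate. Evaluating at \(T=1\) gives \(\#A(\Fq)=P_A(1)=P_{E_1/K}(1)=\#E_1(\F_{q^{2}})\).

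The main obstacle is the first step: justifying that "Frobenius interchanges the factors" yields an honest block-antidiagonal matrix on \(V_\ell A\). This requires the isogeny \(A_K\to E_1\times E_2\) to be chosen compatibly with Galois, so that \(\sigma\) applied to the isogeny is the isogeny composed with the swap. It is \(\pi\), rather than \(\pi_K\) alone, that must respect the splitting. We would verify this directly from the construction in Corollary~\ref{cor:field}, where Galois replaces \(T\) by \(T^{-1}\) and exchanges \(\pi_1,\pi_2\). Since isogenies induce isomorphisms on \(V_\ell\), we can then transport the statement to \(V_\ell A\).
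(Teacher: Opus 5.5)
Your proposal is correct and follows the paper's proof essentially verbatim. Both arguments use the antidiagonal block form of \(\pi\), the Schur-complement identity \(\det\bigl(\begin{smallmatrix}1&-TB\\-TA&1\end{smallmatrix}\bigr)=\det(1-T^{2}BA)\), the identification of \(BA\) with the \(q^{2}\)-Frobenius of \(E_1\), and evaluation at \(T=1\). Your extra discussion of Galois compatibility of the isogeny concerns checking the hypothesis when the lemma is applied in Corollary~\ref{cor:weil}(b); it is not needed for the lemma itself, where the interchange of the factors is assumed.
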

	
	\begin{proof}
		The block shape is the statement that \(\pi\) maps the \(E_1\)-part to the
		\(E_2\)-part and back. For such a matrix,
		\[
		\det\begin{pmatrix}1&-TB\\-TA&1\end{pmatrix}=\det\bigl(1-T^{2}BA\bigr),
		\]
		and \(BA\) represents \(\pi^{2}\), the \(q^{2}\)-power Frobenius, on the
		\(E_1\)-part. Evaluating at \(T=1\) gives \(\#A(\Fq)=P_{E_1/\F_{q^{2}}}(1)=\#E_1(\F_{q^{2}})\).
	\end{proof}
	
	\begin{corollary}\label{cor:weil}
		Let \((u,v,w)\) be a square-free split input with \(\dcoef=0\) and let
		\(P_{\Jac(C)}\) be the Weil polynomial of \(\Jac(C)\) over \(\Fq\).
		\begin{enumerate}[label=\textup{(\alph*)},leftmargin=*]
			\item If the splitting class is a square, then \(E_1,E_2\) are defined over
			\(\Fq\), \(\Jac(C)\) is \(\Fq\)-isogenous to \(E_1\times E_2\) through the
			quotient by \(G\), and with \(P_{E_i}(T)=1-a_iT+qT^{2}\),
			\[
			P_{\Jac(C)}(T)=1-(a_1+a_2)T+(a_1a_2+2q)T^{2}-q(a_1+a_2)T^{3}+q^{2}T^{4}.
			\]
			\item If the splitting class is a nonsquare, then with \(a\) the trace of the
			\(q^{2}\)-power Frobenius of \(E_1\), so that
			\(P_{E_1/\F_{q^{2}}}(T)=1-aT+q^{2}T^{2}\),
			\[
			P_{\Jac(C)}(T)=1-aT^{2}+q^{2}T^{4},
			\qquad
			\#\Jac(C)(\Fq)=\#E_1(\F_{q^{2}}).
			\]
		\end{enumerate}
	\end{corollary}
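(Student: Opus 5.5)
Both parts will follow from Theorem~\ref{thm:split} and Corollary~\ref{cor:field}, plus the fact that the Weil polynomial is an isogeny invariant over the base field. Throughout, $G=\{0,[u],[v],[w]\}$ is $\Fq$-rational, so the quotient $A:=\Jac(C)/G$ is an abelian surface over $\Fq$ and $\Fq$-isogenous to $\Jac(C)$. Hence $P_{\Jac(C)}=P_A$. It therefore suffices to compute $P_A$ from the structure of $A$ supplied by Theorem~\ref{thm:split}.

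\textbf{Square case (a).} When $R$ is a square, Corollary~\ref{cor:field}(i) lets us take $K=\Fq$: the fixed points of $\iota$ are $\Fq$-rational, the coordinate change to $T$ is defined over $\Fq$, and $E_1,E_2,\pi_1,\pi_2$ are all defined over $\Fq$. One point needs a word: if a fixed point lies at infinity, we first apply an $\Fq$-rational projective change. Then $(\pi_{1,*},\pi_{2,*})$ is an $\Fq$-isogeny $\Jac(C)\to E_1\times E_2$ with kernel $G$, by Theorem~\ref{thm:split}. Consequently $P_{\Jac(C)}=P_{E_1}P_{E_2}$. Expanding $(1-a_1T+qT^2)(1-a_2T+qT^2)$ gives the displayed quartic, with $T^2$-coefficient $a_1a_2+2q$ and $T^3$-coefficient $-q(a_1+a_2)$.

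\textbf{Nonsquare case (b).} Now $K=\F_{q^2}$, and over $K$ we have $\Jac(C)_K\sim E_1\times E_2$ via $(\pi_{1,*},\pi_{2,*})$. By Corollary~\ref{cor:field}(ii), the nontrivial Galois element exchanges $(E_1,\pi_1)$ with $(E_2,\pi_2)$. To apply Lemma~\ref{lem:frob-exchange} we must check that the $q$-power Frobenius $\pi$ of $\Jac(C)$ maps the subspace $\pi_1^*V_\ell E_1\subset V_\ell\Jac(C)$ onto $\pi_2^*V_\ell E_2$ and back. We obtain this by conjugating the covers: $\pi_2={}^{\sigma}\pi_1$ as maps $C_K\to E_2={}^{\sigma}E_1$. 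Since Frobenius on $V_\ell\Jac(C)$ intertwines a morphism with its Galois conjugate, it carries the image of $\pi_1^*$ to the image of $\pi_2^*$. The decomposition $V_\ell\Jac(C)=\pi_1^*V_\ell E_1\oplus\pi_2^*V_\ell E_2$ holds rationally because the map is an isogeny, so Frobenius has the required block form. Lemma~\ref{lem:frob-exchange} then gives $P_{\Jac(C)}(T)=P_{E_1/\F_{q^2}}(T^2)=1-aT^2+q^2T^4$, and evaluating at $T=1$ gives $\#\Jac(C)(\Fq)=\#E_1(\F_{q^2})$.

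\textbf{Main obstacle.} The delicate step is this Frobenius-exchange verification. The cleanest route is to observe that the Galois conjugate of the $K$-morphism $(\pi_{1,*},\pi_{2,*})$ is $(\pi_{2,*},\pi_{1,*})$ composed with the swap $E_1\times E_2\to E_2\times E_1$. This follows from Corollary~\ref{cor:field}(ii): Galois replaces $T$ by $T^{-1}$, and under that replacement the formulas for $\pi_1$ and $\pi_2$ interchange. Transporting geometric Frobenius through this identity gives the antidiagonal block matrix. The argument uses only the $\Fq$-rationality of $C$ and no descent datum, consistent with Remark~\ref{rem:no-descent}.
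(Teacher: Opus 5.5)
Your proposal is correct and follows the paper's own route: part (a) uses the $\Fq$-rational quotient by $G$ together with multiplicativity and isogeny invariance of Weil polynomials, and part (b) uses Corollary~\ref{cor:field}(ii) to supply the hypothesis of Lemma~\ref{lem:frob-exchange}. Your extra check that Frobenius carries $\pi_1^{*}V_\ell E_1$ onto $\pi_2^{*}V_\ell E_2$ makes explicit the block-shape hypothesis the paper takes for granted; with $\tilde y=(1-T)^{3}y$ one actually gets ${}^{\sigma}\pi_1=[-1]\circ\pi_2$ rather than $\pi_2$, which is harmless because $[-1]$ preserves the subspaces and the two signs cancel in $\pi^{2}$.
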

	
	\begin{proof}
		(a) The kernel \(G\) is \(\Fq\)-rational, so the quotient map is defined over
		\(\Fq\) and \(\Jac(C)\) is \(\Fq\)-isogenous to the product by
		Theorem~\ref{thm:split} and Corollary~\ref{cor:field}(i). Weil polynomials
		multiply on products and are invariant under \(\Fq\)-isogeny.
		(b) By Corollary~\ref{cor:field}(ii) the \(q\)-power Frobenius exchanges the two
		factors, so Lemma~\ref{lem:frob-exchange} applies with \(A=\Jac(C)\).
	\end{proof}
	
	\begin{corollary}\label{cor:pointcount}
		If \((u,v,w)\) is a square-free split input with \(\dcoef=0\) whose splitting
		class is a nonsquare, then \(\#C(\Fq)=q+1\).
	\end{corollary}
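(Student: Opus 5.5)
Since \(\#C(\Fq)=q+1-\sum_i\alpha_i\), where the \(\alpha_i\) are the reciprocal roots of \(P_{\Jac(C)}\), the claim is equivalent to the vanishing of the coefficient of \(T\) in \(P_{\Jac(C)}(T)\). We read this off from Corollary~\ref{cor:weil}(b). In the reciprocal normalization the coefficient of \(T\) equals \(-(q+1-\#C(\Fq))\). When the splitting class is a nonsquare, Corollary~\ref{cor:weil}(b) gives \(P_{\Jac(C)}(T)=1-aT^{2}+q^{2}T^{4}\), which has no odd-degree terms. So the coefficient of \(T\) is \(0\), hence \(\sum_i\alpha_i=0\), and \(\#C(\Fq)=q+1\).

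The key steps, in order, are as follows. First, recall the standing setup of \S\ref{sec:split}: with \(\dcoef=0\) and a nonsquare splitting class, Corollary~\ref{cor:field}(ii) shows that Frobenius exchanges the two labelled elliptic factors. This is the hypothesis of Lemma~\ref{lem:frob-exchange}, so \(P_{\Jac(C)}\) is even; that is the content of Corollary~\ref{cor:weil}(b). Second, invoke the Lefschetz trace formula for the curve, \(\#C(\Fq)=q+1-\operatorname{tr}(\pi\mid V_\ell\Jac(C))\), which is exactly the relation recorded in the paragraph before Lemma~\ref{lem:frob-exchange}. Third, observe that \(\operatorname{tr}\pi\) is minus the linear coefficient of \(\det(1-T\pi)\), and that this coefficient vanishes for an even polynomial.

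As a sanity check independent of the Weil polynomial, one can also verify directly that the block matrix \(\left(\begin{smallmatrix}0&B\\A&0\end{smallmatrix}\right)\) of Lemma~\ref{lem:frob-exchange} has zero diagonal blocks, and hence trace zero. This reproduces the statement without passing through the evenness of \(P_{\Jac(C)}\).

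The only point needing care is that \(\#C(\Fq)\) counts points on the smooth projective model, including the two points at infinity. These are rational exactly when the leading coefficient of \(f\) is a square, which holds here since \(f\) is monic. The trace formula applies to the smooth model, so nothing extra is needed. The substantive input, namely that the Frobenius action on \(V_\ell\Jac(C)\) swaps the two isotypic pieces, is already supplied by Theorem~\ref{thm:split} and Corollary~\ref{cor:field}. I therefore expect no real obstacle beyond confirming that Lemma~\ref{lem:frob-exchange} applies to \(A=\Jac(C)\) up to \(\F_{q^2}\)-isogeny, and that the evenness of \(P\) is invariant under such isogeny.
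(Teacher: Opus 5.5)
Your proposal is correct and is the paper's own argument: Corollary~\ref{cor:weil}(b) makes $P_{\Jac(C)}$ even, so its linear coefficient $-(q+1-\#C(\Fq))$ vanishes. One caution about your closing remark: evenness of the $\Fq$-Weil polynomial is \emph{not} invariant under $\F_{q^2}$-isogeny (compare $E\times E$ with $E\times E^{\mathrm{tw}}$ for $E$ of nonzero trace). It is also not needed, since Lemma~\ref{lem:frob-exchange} works directly with the $q$-power Frobenius of $\Jac(C)$ on $V_\ell\Jac(C)$, and Corollary~\ref{cor:field}(ii) shows that this Frobenius carries the $E_1$-part onto the $E_2$-part.
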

	
	\begin{proof}
		By Corollary~\ref{cor:weil}(b) the coefficient of \(T\) in \(P_{\Jac(C)}\)
		vanishes, and that coefficient is \(-(q+1-\#C(\Fq))\).
	\end{proof}
	
	\begin{remark}\label{rem:predicate}
		Corollary~\ref{cor:pointcount} gives a necessary condition for the nonsplit
		decomposable branch that is computed from \(C\) alone, without reference to any
		quadratic splitting. It is therefore an independent consistency check on an
		implementation of Algorithm~\ref{alg:richelot}: whenever the evaluator reports
		\textsc{Decomposable} with a nonsquare splitting class, a point count on \(C\)
		must return exactly \(q+1\). We use it in that role in \S\ref{sec:verify}. By
		Corollary~\ref{cor:count-class} the branch it certifies carries
		\(\tfrac12+O(q^{-1})\) of the decomposable stratum, so the check is not vacuous.
	\end{remark}
	
	\begin{remark}\label{rem:weilres}
		Corollary~\ref{cor:weil}(b) says that \(\Jac(C)\) is \(\Fq\)-isogenous to the Weil
		restriction of \(E_1\) along \(\F_{q^{2}}/\Fq\), and
		\(\#\Jac(C)(\Fq)=\#E_1(\F_{q^{2}})\) is the identity that makes point counting on
		such a Jacobian as cheap as point counting on an elliptic curve over
		\(\F_{q^{2}}\). Genus-2
		curves with this property go back to a 2003 preprint of Scholten; Bernstein and
		Lange describe them under the name Scholten curves and analyse their rarity
		\cite[\S\S2.1 and~6]{BernsteinLange2014}. We use
		the name only in this historical remark; the statements of
		Corollaries~\ref{cor:weil} and~\ref{cor:pointcount} are proved above and do not
		depend on it. What the present section adds to that literature is a criterion on
		the displayed splitting, decided in \(2\mathrm{M}\), together with the exact
		frequency of the configuration in the parameter space
		(Corollary~\ref{cor:count-class}).
	\end{remark}
	
	\begin{remark}\label{rem:split-output}
		In the branch \(\dcoef=0\) the evaluator returns the common pencil, equivalently
		its deck involution and fixed quadratic, together with the splitting class and the
		unordered elliptic-factor datum of Theorem~\ref{thm:split}. When the splitting
		class is a nonsquare, that datum is represented over \(\F_{q^{2}}\) with
		Galois-conjugate factors, and an implementation must not present one factor as an
		\(\Fq\)-rational output. What it may present over \(\Fq\) is the quotient surface
		with its polarization and quotient map (Corollary~\ref{cor:field}), the Weil
		polynomial (Corollary~\ref{cor:weil}), and every symmetric function of the pair. Equivalently, the branch output is
		organised over the quadratic \'etale algebra \(\Fq[t]/(h(t))\): the algebra
		is split exactly when the splitting class is a square, and its two geometric
		points are the two fixed points of \(\iota\).
	\end{remark}
	
	\begin{remark}\label{rem:split-credit}
		Complementary degree-two elliptic covers and the polarized \((2,2)\)-split
		Jacobian are classical \cite[\S1]{Kuhn1988}\cite[\S8]{Smith2005}. This section adds arithmetic over the base field is arithmetic over the
		base field: the identification of the splitting class with a line invariant and
		the resulting refined count (Proposition~\ref{prop:class-line},
		Corollary~\ref{cor:count-class}), the explicit extra involution and its two
		rationality criteria (Proposition~\ref{prop:extra-involution}), the field of
		definition of the factors (Corollary~\ref{cor:field}), the two Frobenius shapes
		(Corollary~\ref{cor:weil}), the point-count identity
		(Corollary~\ref{cor:pointcount}), and the certification of the branch by
		\(\dcoef=0\) at a cost of \(2\mathrm{M}\).
	\end{remark}
	
	\section{Cost model, operation counts and verification}\label{sec:verify}
	
	\subsection{Cost model}\label{subsec:cost}
	
	Costs are measured in a monic quadratic coefficient model. A field multiplication
	costs \(\mathrm{M}\) and a field squaring costs \(\mathrm{S}\); additions,
	subtractions, negations, comparisons with zero, and multiplications by fixed
	constants, in particular by 2 and by 4, are not charged. Inversions, quadratic
	residuosity tests, square roots, and coordinate changes are charged separately
	and are never folded into a count of \(\mathrm{M}\) and \(\mathrm{S}\). No
	asymptotic claim is made or intended: every degree-two Richelot realization has
	constant cost per step, so the only meaningful comparison is a count of named
	operations in a declared model.
	
	\subsection{Operation counts}\label{subsec:opcounts}
	
	For monic input, each bracket is \(M_2x^{2}+2M_1x+M_0\) with
	\(M_2=b_1-a_1\), \(M_1=b_0-a_0\) free and \(M_0=a_1b_0-a_0b_1\) costing
	\(2\mathrm{M}\). Table~\ref{tab:cost} separates detection of the branch from
	construction of its output; conflating the two is the reason a figure such as
	``the decomposable branch costs \(2\mathrm{M}\)'' can be misread.
	
	\begin{table}[h!]
		\centering
		\caption{Operation counts in the cost model of \S\ref{subsec:cost}.}
		\label{tab:cost}
		\small
		\setlength{\tabcolsep}{4pt}
		\begin{tabular}{@{}>{\raggedright\arraybackslash}p{0.42\linewidth}
				>{\raggedright\arraybackslash}p{0.25\linewidth}
				>{\raggedright\arraybackslash}p{0.26\linewidth}@{}}
			\toprule
			Task & Cost & Output\\
			\midrule
			Three brackets from the minors & \(6\mathrm{M}\) & \((U,V,W)\)\\
			Three input discriminants \(s_1^{2}-4s_0\) & \(3\mathrm{S}\) & predicate \(C_1\)\\
			Three pairwise resultants \(M_1^{2}-M_2M_0\) & \(3\mathrm{M}+3\mathrm{S}\) & predicate \(C_2\)\\
			Branch determinant, Proposition~\ref{prop:d-syzygy} & \(2\mathrm{M}\) & predicate \(C_3\), the tag\\
			\midrule
			Full certificate beyond the brackets & \(5\mathrm{M}+6\mathrm{S}\) & tag \(+\) validity\\
			Branch determinant alone, on a certified chain & \(2\mathrm{M}\) & tag\\
			\midrule
			Normalize the genus-2 codomain \(y^{2}=\dcoef^{-1}UVW\) & 1 inversion, batched & monic model\\
			Materialize the decomposable datum (Theorem~\ref{thm:split}) & 1 residuosity test, 1 square root in \(\Fq\) or \(\F_{q^{2}}\), \(O(1)\) coordinate change & involution, splitting class, elliptic factors with covers\\
			Continue a chain through a quintic output & 1 projective substitution & new displayed splitting\\
			\bottomrule
		\end{tabular}
	\end{table}
	
	No square root and no inversion is intrinsic to the certification layer. Along a
	chain in which each previous output has been certified, the square-freeness data
	of the next displayed splitting are inherited and the only new predicate is
	\(\dcoef\), at \(2\mathrm{M}\) on monic input. What distinguishes the certified
	evaluator is not a lower operation count but that it classifies the quotient from
	data it computes anyway, accepts the valid degree-five locus without a change of
	model, and returns structured arithmetic when \(\dcoef=0\); its extra work
	replaces downstream failure handling rather than adding a validation pass.
	
	\subsection{Symbolic verification}\label{subsec:symbolic}
	
	The identities behind the certificate are polynomial identities in the six input
	coefficients and are checked once over the integral polynomial ring
	\(\Z[u_0,u_1,v_0,v_1,w_0,w_1]\), hence simultaneously for all odd finite fields.
	The following equalities are the core symbolic checks:
	\begin{itemize}[leftmargin=*,itemsep=1pt]
		\item \(a'b-ab'=M_2x^{2}+2M_1x+M_0\) (Lemma~\ref{lem:bracket-minors}), which
		validates the derivative-free bracket assembly;
		\item \(\Res(a,b)=M_1^{2}-M_2M_0\) and \(\Disc(a'b-ab')=4\Res(a,b)\)
		(Lemmas~\ref{lem:bezoutian} and~\ref{lem:three-identities}(a)), which validate the
		coprimality predicate;
		\item the minor syzygy \(a_2M_0-a_1M_1+a_0M_2=0\) (Proposition~\ref{prop:syzygy})
		and \(\dcoef=-(u_0M_2-u_1M_1+u_2M_0)\) (Proposition~\ref{prop:d-syzygy}), which
		validate the branch classifier;
		\item \(U'V-UV'=-2\dcoef\,w\) and its two cyclic analogues, and
		\(\dcoef(U,V,W)=-2\dcoef(u,v,w)^{2}\) (Lemma~\ref{lem:three-identities}(b),(c));
		\item invariance of \(\Disc\), \(\Res\) and \(\dcoef\) under \(x\mapsto x+\delta\)
		(Proposition~\ref{prop:shift}), and the translation rule of
		Lemma~\ref{lem:translate};
		\item \(\dcoef=(a-e)(b-c)\) on the alignment normal form
		(Lemma~\ref{lem:align-normal});
		\item the pencil scaling identity \eqref{eq:scaling} for the members of the
		pencil, its sextic consequence \eqref{eq:f-scaling} on the parametrization
		\(w=u+\lambda(v-u)\), and the line form \eqref{eq:res-line} of the resultant
		(Lemma~\ref{lem:deck}, Propositions~\ref{prop:extra-involution}
		and~\ref{prop:class-line}).
	\end{itemize}
	Establishing these over \(\Z\) rather than modulo a prime covers all odd finite fields at once and prevents a finite-sample test from being mistaken for a proof. Proposition~\ref{prop:postcheck} then shows that no further algebraic predicate on the output is needed once \(C_1\)--\(C_3\) hold.
	
	\subsection{Exhaustive layers}\label{subsec:exhaustive}
	
	The computational supplement performs the following complete enumerations. No
	random sampling is involved, and none of them is used in a proof; they are
	independent regression checks of the closed formulas, of the case split and of
	the implementation.
	
	\paragraph{Counts and masses}
	The ordered stratum counts \eqref{eq:Aq}--\eqref{eq:G6q}, the refined counts
	\(D_q^{\square}\) and \(D_q^{\not\square}\) of Corollary~\ref{cor:count-class},
	the number of stabilised triples, the orbit count
	\(N_{\mathrm{orb}}(\Adm)\) and the stabilised orbit count
	\(N_{\mathrm{stab}}(\Adm)\) of Theorem~\ref{thm:mass} were verified by exhaustive
	enumeration of \(\Adm_{\Fq}(\Fq)\) for every odd prime power
	\[
	q\in\{3,5,7,9,11,13,17,19,23,25,27,29,31,37,41,43,47,49\}.
	\]
	The non-prime values matter: the theorems are stated for all odd prime powers,
	while the implementation discussion of \S\ref{subsec:opcounts} is restricted to
	prime fields. The orbit data were obtained in two independent ways: by direct
	enumeration of the \(\Aff(\Fq)\)-orbits for \(q\le11\), and, for the whole range,
	by computing \(\#\bigl(\Fix(\alpha)\cap\Adm_{\Fq}(\Fq)\bigr)\) for every
	\(\alpha\in\Aff(\Fq)\) and feeding the result to Burnside's lemma. The second
	computation also confirms Lemma~\ref{lem:fix} directly: for every \(q\) in the
	range and every nontrivial \(\alpha\) other than the \(q\) involutions
	\(\iota_b\), the set of admissible triples fixed by \(\alpha\) is empty.
	Table~\ref{tab:verify} reproduces three representative fields. At
	\(q=25\), for instance, the decomposable and quintic strata carry
	\(3.98\%\) and \(11.47\%\) of the admissible triples, in line with the
	leading terms \(1/q\) and \(3/q\) of Corollary~\ref{cor:densities}.
	
	\begin{table}[H]
		\centering
		\caption{Exhaustive verification of Theorems~\ref{thm:counts}
			and~\ref{thm:mass} and of Corollary~\ref{cor:count-class}, reported as ordered
			counts. The enumeration covers every odd prime power \(q\le49\); three fields are
			displayed.}
		\label{tab:verify}
		\small
		\begin{tabular}{@{}rrrrrrr@{}}
			\toprule
			\(q\) & \(A_q\) & \(D_q\) & \(G_{5,q}\) & \(D_q^{\square}\) & \(N_{\mathrm{orb}}\) & \(N_{\mathrm{stab}}\)\\
			\midrule
			11 & 1\,015\,740 & 90\,090 & 246\,180 & 35\,640 & 9\,306 & 144\\
			13 & 3\,020\,160 & 228\,228 & 632\,736 & 94\,380 & 19\,470 & 220\\
			25 & 191\,709\,600 & 7\,631\,400 & 21\,981\,600 & 3\,491\,400 & 320\,022 & 1\,012\\
			\bottomrule
		\end{tabular}
	\end{table}
	
	\paragraph{The certificate}
	Proposition~\ref{prop:postcheck} was verified exhaustively for every odd prime
	\(p\le17\): among all ordered square-free split inputs with \(\dcoef\neq0\), of
	which there are \(15\,907\,920\) at \(p=17\), the output triple in every case has
	the asserted degree pattern, pairwise coprimality and nonvanishing discriminants,
	with no exception. Proposition~\ref{prop:shift} needs no enumeration; it is one
	of the symbolic identities of \S\ref{subsec:symbolic}.
	
	\paragraph{The decomposable branch}
	For every odd prime power \(q\le13\) the supplement enumerates all decomposable
	admissible triples and checks, in each case: that the splitting class is
	independent of the pair (Proposition~\ref{prop:pencil}); that it agrees with the
	square class of the line invariant \(m^{2}+c\), respectively is a square on a
	vertical line (Proposition~\ref{prop:class-line}); that the transformed sextic of
	Theorem~\ref{thm:split} is even with \(f_0f_3\neq0\); that
	\(\#\Jac(C)(\Fq)=\#E_1(\Fq)\cdot\#E_2(\Fq)\) when the splitting class is a
	square; and that \(\#C(\Fq)=q+1\) with an even Weil polynomial when it is a
	nonsquare (Corollaries~\ref{cor:weil} and~\ref{cor:pointcount}). The degenerate
	configurations with \(M_2=0\), where one fixed point of the pencil involution lies
	at infinity and the coordinate change of Theorem~\ref{thm:split} is a
	translation, are checked separately for every odd prime \(p\le17\), the case \(p=17\) by a
	dedicated enumeration of the vertical stratum. The identity \eqref{eq:f-scaling}
	underlying Proposition~\ref{prop:extra-involution} is one of the symbolic
	identities of \S\ref{subsec:symbolic} and therefore holds over every odd \(\Fq\);
	in addition, its pointwise evaluation at every \(x\in\F_9\), together with
	\(\iota\circ\iota=\mathrm{id}\) away from the pole and the rationality criterion
	of Proposition~\ref{prop:extra-involution}(i), is re-verified for every
	decomposable triple over \(\F_9\), and in the specialized form \(f(-x-a)=f(x)\)
	for every vertical triple at \(p=17\). The worked examples of
	\ref{app:examples} over \(\F_{101}\) are re-verified by the same code
	path.
	
	\subsection{Reproducibility}\label{subsec:repro}
	The computational supplement contains: a symbolic script establishing the
	identities of \S\ref{subsec:symbolic} over
	\(\Z[u_0,u_1,v_0,v_1,w_0,w_1]\); an implementation of
	Algorithm~\ref{alg:richelot} returning the three tags of
	Theorem~\ref{thm:certificate}; the exhaustive enumerations of
	\S\ref{subsec:exhaustive}, with the field ranges stated there and with the
	explicit construction of \(\F_9\), \(\F_{25}\), \(\F_{27}\) and \(\F_{49}\); the orbit and
	stabiliser enumerations; the raw tables from which
	Table~\ref{tab:verify} is drawn; a test file reproducing the three worked
	examples of \ref{app:examples}; and a machine-readable record of the
	software versions and command lines that regenerate every table. No wall-clock
	timing claim is made: a timing comparison would require an archived revision of
	each implementation, an explicit baseline and a stated statistical protocol, and
	until such a package exists the operation counts of \S\ref{subsec:opcounts} are
	the reproducible performance statement of this work.
	
	\section{Scope and limitations}\label{sec:limits}
	
	Five boundaries of this treatment deserve explicit statement.
	
	\paragraph{Galois-permuted kernels}
	An \(\Fq\)-rational maximal 2-Weil-isotropic kernel corresponds to a partition of
	the six Weierstrass points into three Galois-stable sets of pairs; Galois may
	permute the pairs without fixing them, in which case the quadratic factors are
	defined only over \(\F_{q^{2}}\) or \(\F_{q^{3}}\) although the kernel and the
	isogeny are defined over \(\Fq\). The classification of this paper covers the
	factor-wise rational case. The branch predicate itself survives descent: by
	Proposition~\ref{prop:equivariance} a permutation of \((u,v,w)\) changes
	\(\dcoef\) by its sign, so for a Galois-stable splitting \(\sigma(\dcoef)=\pm\dcoef\)
	for every \(\sigma\), and \(\dcoef^{2}\in\Fq\) even when the factors are not
	individually rational. The full stratification by Galois type is left open.
	
	\paragraph{Moduli-level statements}
	The counts and masses of \S\ref{sec:counts} and \S\ref{sec:mass} are counts of
	displayed data, quotiented at most by affine coordinate changes
	(Remark~\ref{rem:not-moduli}). We prove that \(\dcoef=0\) produces an extra
	involution (Proposition~\ref{prop:extra-involution}), but we do not prove the
	converse at the level of moduli, and we make no claim about Bolza's invariant of
	the sextic beyond the pointer of Remark~\ref{rem:bolza}.
	
	\paragraph{Descent of the product}
	In the nonsquare branch we prove that the two labelled elliptic factors live over
	\(\F_{q^{2}}\) and are exchanged by Galois, and that the quotient surface with its
	polarization is defined over \(\Fq\). We do not construct an effective descent
	datum for the product decomposition (Remark~\ref{rem:no-descent}); nothing in the
	paper depends on one.
	
	\paragraph{Cost of the decomposable output}
	The figure \(2\mathrm{M}\) is the cost of detecting the branch. Producing the
	datum costs more: one quadratic-residuosity test for the splitting class and, for
	the explicit models of Theorem~\ref{thm:split}, one square root in \(\Fq\) or
	\(\F_{q^{2}}\) together with a coordinate change of constant cost. See
	Table~\ref{tab:cost}.
	
	\paragraph{Chaining through the quintic stratum}
	A degree-five output has a Weierstrass point at infinity and is not presented as a
	product of three monic quadratics, so before the next step of a chain it requires
	one projective change of coordinate \(x\mapsto1/(x-c)\) with \(c\) outside the
	root set. The certificate then applies to the new displayed splitting without
	modification, since Theorem~\ref{thm:certificate} is total on displayed monic
	triples.
	
	\section{Conclusion}\label{sec:conclusion}
	
	Two questions about a displayed Richelot splitting \(f=uvw\) are often conflated:
	whether the splitting is a valid genus-2 input, and what the geometric type of its
	quotient is. The first is decided by three discriminants and three pairwise
	resultants, the second by the coefficient determinant \(\dcoef\); all seven
	quantities come from data the step already computes, the resultants and the
	determinant being read off the very Bezoutian minors that assemble the brackets,
	so classification is a by-product of the step rather than a pass over it. The
	counting theorems quantify that classification exactly. The parameter space of
	admissible splittings stratifies over the incidence geometry of the discriminant
	parabola, the strata have the closed-form cardinalities of
	Theorem~\ref{thm:counts}, and modulo the affine group the cardinalities become the
	degree-four masses of Theorem~\ref{thm:mass}, with all nontrivial stabilisers
	accounted for by an explicitly located proper sublocus of the decomposable
	stratum. On that stratum, the square class of one resultant, which is an
	invariant of the line joining the three parameter points, splits the configurations
	into two families of asymptotically equal size, one with \(\Fq\)-rational elliptic
	factors and one of Weil-restriction shape with \(\#C(\Fq)=q+1\).
	
	Three directions remain open. The stratification of Galois-permuted splittings,
	for which \S\ref{sec:limits} shows that the branch predicate descends, is
	unresolved. A moduli-level converse to
	Proposition~\ref{prop:extra-involution}, valid over \(\Fq\) and compatible with
	the Galois action on the fifteen splittings of a sextic, would connect \(\dcoef\)
	to Bolza's invariant precisely rather than by analogy. And in characteristic 2 the
	Bezoutian survives but its diagonal becomes a square and the degree analysis
	changes completely; a characteristic-2 analogue requires a separate classification
	theorem rather than a formal reuse of the present one.
	
	\appendix
	
	\section{Three worked examples over \texorpdfstring{\(\F_{101}\)}{F101}}\label{app:examples}
	
	The three examples below form one suite. The first verifies the certificate in the
	generic branch, the second verifies that a valid degree-five output is retained,
	and the third shows that the decomposable branch yields arithmetic data rather
	than a failure. All arithmetic is in \(\F_{101}\).
	
	\begin{example}[A complete genus-2 certificate]\label{ex:A}
		Take \(u=x^{2}+1\), \(v=x^{2}+3x+2\), \(w=x^{2}+5x+3\). The input discriminants
		are \(97,1,13\) and the pairwise resultants are \(\Res(v,w)=3\),
		\(\Res(w,u)=29\), \(\Res(u,v)=10\). The branch determinant is
		\[
		\dcoef=\det\begin{pmatrix}1&0&1\\2&3&1\\3&5&1\end{pmatrix}=-1 .
		\]
		The three brackets are \(U=2x^{2}+2x-1\), \(V=-5x^{2}-4x+5\), \(W=3x^{2}+2x-3\);
		for instance the minor triple of \((v,w)\) is \((2,1,-1)\), giving \(U\). Since
		\(\dcoef\neq0\), Theorem~\ref{thm:certificate} returns
		\(\textsc{Genus2}(C',\phi)\) with \(C'\colon y^{2}=-U(x)V(x)W(x)\). No
		discriminant or resultant calculation on \((U,V,W)\) is needed:
		Proposition~\ref{prop:postcheck} already implies that this product is square-free
		of degree five or six.
	\end{example}
	
	\begin{example}[Aligned but non-decomposable]\label{ex:B}
		Take \(u=x^{2}+1\), \(v=x^{2}+x+2\), \(w=x^{2}+x+4\). The pair \((v,w)\) is
		aligned: its leading minor is \(M_2=0\) and \(U=v'w-vw'=4x+2\) is linear.
		Nevertheless the input discriminants and pairwise resultants are nonzero, while
		\[
		\dcoef=\det\begin{pmatrix}1&0&1\\2&1&1\\4&1&1\end{pmatrix}=-2\neq0 .
		\]
		The output product has degree five, not six, but it is a valid smooth genus-2
		codomain. This is the concrete reason that neither an output-degree test nor a
		guard on the bracket degree may be used as a rejection criterion
		(Remark~\ref{rem:no-guard}).
	\end{example}
	
	\begin{example}\label{ex:C}
		Take \(u=x^{2}+1\), \(v=x^{2}+x+2\), \(w=x^{2}+2x+3=2v-u\). The input is
		square-free and pairwise coprime, and
		\[
		\dcoef=\det\begin{pmatrix}1&0&1\\2&1&1\\3&2&1\end{pmatrix}=0 ,
		\]
		so the three factors form one pencil. For \((u,v)\) the minors are
		\((M_2,M_1,M_0)=(1,1,-1)\) and \(u'v-uv'=x^{2}+2x-1\), so the common deck
		involution is \(\iota(x)=(-x+1)/(x+1)\) and its fixed points satisfy
		\(x^{2}+2x-1=0\), that is \(x=-1\pm\sqrt2\). The splitting class is
		\(R=\Res(u,v)=M_1^{2}-M_2M_0=2\). The three parameter points \((0,1),(1,2),(2,3)\)
		lie on the line \(b=a+1\), with \(m=1\) and \(c=1\), and
		\(m^{2}+c=2\), in agreement with Proposition~\ref{prop:class-line}. Since
		\(101\equiv5\pmod 8\), the class of 2 is a nonsquare, so the line is external to
		\(\Pi\), the two fixed points are conjugate over \(\F_{101^{2}}\), and the
		labelled elliptic factors are exchanged by Galois. The evaluator returns
		\(\textsc{Decomposable}(\mathcal D)\), not an invalid input and not a genus-2
		equation obtained by multiplying the proportional brackets. By
		Corollary~\ref{cor:pointcount} the curve satisfies \(\#C(\F_{101})=102\), which is
		an implementation check requiring no elliptic data; by
		Corollary~\ref{cor:weil}(b) the Weil polynomial of \(\Jac(C)\) is
		\(1-aT^{2}+101^{2}T^{4}\) with \(a=134\), the common trace of the two
		Galois-conjugate factors over \(\F_{101^{2}}\).
	\end{example}
	
	\section{Two sampling measures on the decomposable branch}\label{app:superspecial}
	
	This appendix compares the density of the decomposable branch in the parameter
	space of displayed splittings with its frequency in the setting where the Richelot
	step is most often applied. Unlike the rest of the paper, the comparison is not
	self-contained: the second density is assembled from results quoted from the
	literature, and we state it as such. Nothing in
	\S\S\ref{sec:certificate}--\ref{sec:split} depends on this appendix.
	
	Corollary~\ref{cor:densities} measures \(\Dec\) against \(\Adm\) with respect to
	the uniform measure on the parameter space of displayed splittings over \(\Fq\).
	The motivating application measures something else. There one fixes a superspecial
	principally polarized abelian surface over \(\F_{p^{2}}\) and looks at the fifteen
	maximal 2-Weil-isotropic subgroups of its 2-torsion; the relevant graph is the
	superspecial Richelot isogeny graph, whose vertices are geometric isomorphism
	classes and which is 15-regular counting weights
	\cite[\S5]{FloritSmith2022auto}; this is the setting of the genus-2 isogeny
	problem \cite{CostelloSmith2020}. The two measures differ, and not by a constant.
	
	\begin{proposition}\label{prop:two-densities}
		Let \(q\) be odd.
		\begin{enumerate}[label=\textup{(\roman*)},leftmargin=*]
			\item Among square-free split inputs over \(\Fq\), sampled uniformly, the
			proportion with \(\dcoef=0\) is
			\((q^{2}-3q+3)/\bigl((q-2)(q^{2}-q+4)\bigr)=1/q+O(q^{-3})\).
			\item Let \(C\) be a superspecial genus-2 curve over \(\overline{\F}_p\) whose
			reduced automorphism group is trivial. Then none of the fifteen Richelot quotients
			of \(\Jac(C)\) is a product of elliptic curves.
		\end{enumerate}
	\end{proposition}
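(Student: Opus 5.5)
Part (i) is immediate from what is already proved. I will simply cite Corollary~\ref{cor:densities}, which gives the ratio \(D_q/A_q=(q^{2}-3q+3)/\bigl((q-2)(q^{2}-q+4)\bigr)\) and its expansion \(1/q-3/q^{3}+O(q^{-4})\). In particular the error term is \(O(q^{-3})\). Nothing further is needed there.

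For part (ii) I will argue by contraposition, using the extra involution of Proposition~\ref{prop:extra-involution}. Suppose that one of the fifteen Richelot quotients of \(\Jac(C)\) is a product of elliptic curves. Work over \(K=\overline{\F}_p\). There every maximal 2-Weil-isotropic subgroup corresponds to a partition of the six Weierstrass points into three pairs, so after a projective change of coordinate over \(K\) moving no branch point to infinity I can write \(f=uvw\) with \(u,v,w\) monic, separable and pairwise coprime quadratics over \(K\). This is a square-free split input over \(K\). The algebraic parts of the paper (Lemma~\ref{lem:deck}, Propositions~\ref{prop:soundness}, \ref{prop:pencil}, \ref{prop:extra-involution}) hold over any field of odd characteristic, since only their rationality clauses use finiteness of \(\Fq\).

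The key step is to show that a product quotient forces \(\dcoef=0\). I would argue from Proposition~\ref{prop:soundness}(i): if \(\dcoef\neq0\), the quotient is the Jacobian of the smooth genus-2 curve \(y^{2}=\dcoef^{-1}UVW\), with its principal polarization being a Jacobian polarization. A principally polarized abelian surface that is the Jacobian of a smooth genus-2 curve has an irreducible theta divisor. It therefore cannot be isomorphic, as a principally polarized surface, to \(E_1\times E_2\) with the product polarization, whose theta divisor \(E_1\times\{0\}+\{0\}\times E_2\) is reducible. Hence \(\dcoef=0\). Proposition~\ref{prop:extra-involution} then produces the involution \(\tau\), which is not the hyperelliptic involution, so the reduced automorphism group of \(C\) contains \(\langle\iota\rangle\cong C_2\). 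This contradicts triviality, and part (ii) follows.

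The main obstacle is making precise that ``the quotient is a product of elliptic curves'' means the polarized quotient, and checking that the classical Richelot correspondence identifies the polarization on \(\Jac(C)/G\) with the Jacobian polarization of \(C'\) when \(\dcoef\neq0\). I would take this from \cite{CasselsFlynn1996} together with the standard fact that principally polarized abelian surfaces are either Jacobians or polarized products, never both. If the statement is meant for unpolarized products, the claim can fail, because a Jacobian can be isogenous or even isomorphic to a product; so I would state explicitly that the polarized reading is used, consistent with the decomposed Richelot isogenies in \cite{KatsuraTakashima2020,FloritSmith2022auto}.
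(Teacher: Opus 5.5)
Part (i) is the paper's argument verbatim. For part (ii) your route is correct but genuinely different from the paper's.

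The paper disposes of (ii) in one line. It cites Katsura--Takashima, Prop.~4.3: a decomposed Richelot isogeny out of \(\Jac(C)\) exists if and only if the reduced automorphism group of \(C\) contains an involution. A trivial group contains none.

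You instead prove the needed ``only if'' direction from material already in the paper, in two steps:
\begin{enumerate}
\item A polarized product quotient forces \(\dcoef=0\). This is the contrapositive of Proposition~\ref{prop:soundness}(i), combined with the fact that a principally polarized surface cannot be both a Jacobian and a polarized product, because its theta divisor is irreducible in the first case and reducible in the second.
\item \(\dcoef=0\) yields the explicit non-hyperelliptic involution \(\tau\) of Proposition~\ref{prop:extra-involution}.
\end{enumerate}
Step 2 is exactly the direction that Remark~\ref{rem:bolza} says the paper does prove. So your argument shows the external citation can be replaced by two classical inputs: the compatibility of the Richelot isogeny with the polarizations when \(\dcoef\neq0\), and Weil's Jacobian-versus-product dichotomy.

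Your route also makes explicit two points the one-line proof leaves implicit. First, the superspecial hypothesis plays no role. Second, ``product of elliptic curves'' must be read as a polarized product. You are right to insist on this: in the superspecial setting every Richelot quotient is isomorphic to \(E\times E\) as an unpolarized surface, so the unpolarized reading would make (ii) false.

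The paper's route is shorter and inherits the literature's conventions; yours is more self-contained. Your passage from \(\Fq\) to \(\overline{\F}_p\) is harmless. It can be made literal by working over a finite field \(\F_{q'}\) over which \(C\), its six Weierstrass points and some non-branch point are all rational; there the paper's statements apply as written.
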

	
	\begin{proof}
		(i) is Corollary~\ref{cor:densities}. For (ii), a decomposed Richelot isogeny
		outgoing from \(\Jac(C)\) exists if and only if the reduced automorphism group of
		\(C\) contains an element of order two
		\cite[Prop.~4.3]{KatsuraTakashima2020}; a trivial group contains none.
	\end{proof}
	
	Part (ii) is not a small correction to part (i); it is a different order of
	magnitude. We quote two counts from Florit and Smith
	\cite[Table~2]{FloritSmith2022auto}: the vertices with trivial reduced
	automorphism group, there called Type-A, number
	\(\tfrac1{2880}p^{3}+O(p^{2})\), and the vertices carrying exactly one reduced
	involution, there called Type-I, number \(\tfrac1{48}p^{2}+O(p)\). A Type-A vertex
	has fifteen weight-one edges, none of them to an elliptic product
	\cite[Table~1]{FloritSmith2022auto}, and a generic Type-I vertex has exactly one
	weight-one edge to an elliptic product among its fifteen
	\cite[Prop.~6.1, Case~(1)]{KatsuraTakashima2020}. The remaining vertex types with an involution
	number \(O(p)\) and therefore do not affect the leading term. Hence the proportion
	of decomposable edges among all Richelot edges outgoing from Jacobian vertices is
	\[
	\frac{\tfrac1{48}p^{2}}{15\cdot\tfrac1{2880}p^{3}}+O(p^{-2})=\frac4p+O(p^{-2}),
	\]
	whereas the parameter-space density of
	Proposition~\ref{prop:two-densities}(i), evaluated at \(q=p^{2}\), predicts
	\(p^{-2}\). The two disagree by a factor of order \(p\).
	
	The reason is structural rather than numerical. On the parameter space,
	\(\dcoef=0\) is a codimension-one condition and behaves like one: a hypersurface
	carries a \(1/q\) fraction of the points. On the superspecial locus it is not a
	probabilistic event at all but a consequence of an extra automorphism, and extra
	automorphisms occur on a locus of codimension one in a three-dimensional moduli
	space, hence on \(O(p^{2})\) of the \(O(p^{3})\) vertices. The decomposable branch is arithmetically rigid on the superspecial locus and generic on the parameter space.
	
	This has a concrete consequence for implementations. The figure
	\(1/q+O(q^{-3})\) for the decomposable stratum is the right budget for an
	evaluator fed uniformly random split inputs, for instance in randomized testing
	or in experiments on split Jacobians. It is the wrong budget for a graph walk on
	the superspecial locus over \(\F_{p^{2}}\), where the branch occurs at a
	frequency of order \(1/p\) rather than \(1/p^{2}\): an implementation that
	provisions for it on the basis of (i) underestimates it by a factor of order
	\(p\), and one that omits the branch entirely fails exactly at the
	automorphism-carrying vertices, which is where a graph-walk algorithm is most
	likely to be probed.
	
	Two caveats. The comparison is at the level of vertex counts by reduced
	automorphism type and does not incorporate the automorphism weights with which the
	superspecial Richelot graph is usually equipped
	\cite{FloritSmith2022auto,KatsuraTakashima2020}; a weighted version would require
	the mass of each type rather than its cardinality. And the two measures are taken
	on different objects, \(\Fq\)-rational displayed splittings on one side and
	geometric isomorphism classes of superspecial surfaces on the other, so the
	comparison should be read as a statement about where the branch is concentrated,
	not as an equality of two estimates of one quantity.
	
	\bibliographystyle{elsarticle-num}
	\bibliography{references}
	
\end{document}